\newtheorem{theorem}{Theorem}[section]
\newtheorem{proposition}{Proposition}[section]
\theoremstyle{definition}
\theoremstyle{remark}
\newtheorem{remark}[theorem]{Remark}
\numberwithin{equation}{section}
\begin{document}
	
	\title{Conservative semi-Lagrangian schemes for kinetic equations \\Part I: Reconstruction}

	\author[S. Y. Cho]{Seung Yeon Cho}
	\address{Seung Yeon Cho\\
		Department of Mathematics and Computer Science  \\
		University of Catania\\
		95125 Catania, Italy}
	\email{chosy89@skku.edu}
	
	\author[S. Boscarino]{Sebastiano Boscarino}
	\address{Sebastiano Boscarino\\
		Department of Mathematics and Computer Science  \\
		University of Catania\\
		95125 Catania, Italy} \email{boscarino@dmi.unict.it}
	
	\author[G. Russo]{Giovanni Russo}
	\address{Giovanni Russo\\
		Department of Mathematics and Computer Science  \\
		University of Catania\\
		95125 Catania, Italy} \email{russo@dmi.unict.it}
	
	\author[S.-B. Yun]{Seok-Bae Yun}
	\address{Seok-Bae Yun\\
		Department of Mathematics\\ 
		Sungkyunkwan University\\
		Suwon 440-746, Republic of Korea}
	\email{sbyun01@skku.edu}

	\maketitle
	
	\begin{abstract}
	%%%
	In this paper, we propose and analyse a reconstruction technique which enables one to design high-order conservative semi-Lagrangian schemes for kinetic equations. The proposed reconstruction can be obtained by taking the sliding average of a given polynomial reconstruction of the numerical solution. A compact representation of the high order conservative reconstruction in one and two space dimension is provided, and its mathematical properties are analyzed. To demonstrate the performance of proposed technique, we consider implicit semi-Lagrangian schemes for kinetic-like equations such as the Xin-Jin model and the Broadwell model, and then solve related shock problems which arise in the relaxation limit. Applications to BGK and Vlasov-Poisson equations will be presented in the second part of the paper.
	%%%%	
	\end{abstract}

	\section{Introduction}
	Kinetic equations and quasi-linear systems of conservation laws are strongly related. For example, the behavior of rarefied gas is well described by the Boltzmann transport equation (BTE) \cite{Cercignani}. Once 
	velocity space is discretized, BTE has the mathematical structure of a semi-linear hyperbolic system of balance laws. In the so-called fluid dynamic limit, the distribution function approaches the Maxwellian whose parameters satisfy the Euler equations of gas dynamics, which is a quasi-linear system of conservation laws. The Broadwell model of the BTE in one space dimension is a semi-linear $3\times 3$ relaxation system. As the relaxation parameter vanishes, the model relaxes to a $2\times 2$ quasi-linear hyperbolic system of conservation laws. An implicit treatment of the collision term
	using L-stable schemes allows the construction of asymptotic preserving schemes which become consistent schemes of the relaxed limit \cite{CJR,pareschi2005implicit,Jin2}.

	Quasi-linear hyperbolic systems generically develop jump discontinuities in finite time. Most schemes for their numerical solutions are based on two fundamental ingredients: conservation and non-oscillatory reconstruction. Finite volume and finite difference methods have been widely used for the discretization of the convective terms of kinetic models (Eulerian approach), which are usually treated explicitly. In this way, it is relatively easy to construct conservative schemes. Conservation is relevant especially in the relaxed limit: lack of conservation will prevent weak consistency of the method for discontinuous solutions leading, for example, to $\mathcal{O}(1)$ errors in the propagation of shocks.

	%approaches have been successfully applied to deal with discontinuities of solutions near shocks by adopting high-order non-oscillatory reconstructions in the computation of fluxes on the cell boundaries.
	
	Conservative non-oscillatory reconstruction such as
	the ENO or WENO methology \cite{Shu98} have been widely adopted in many practical problems \cite{CFR,CCD,LSZ}. The approach has been extended to a compact WENO (CWENO \cite{C-2008,CPSV,CPSV-2017,LPR2,LPR1,LPR}) reconstruction which gives uniform accuracy in a whole cell, and it allows the construction of efficient high order finite volume scheme in several space dimensions \cite{dumbser2017central}. Unfortunately, explicit Eulerian schemes cannot avoid CFL-type time step restrictions imposed by converction-like terms in hyperbolic equations.
	
	To treat this difficulty, semi-Lagrangian approaches recently have gained popularity because they do not suffer from such CFL-type time step restriction which arises in the treatment of Eulerian counterparts. Instead, since the semi-Lagrangian method is obtained by integrating the equations along its characteristics, this approach necessarily requires the computation of numerical solutions on off-grid points by a reconstruction which makes use of the numerical solutions on grid points.
	
	If one uses piecewise Lagrange polynomial reconstruction, then conservation is guaranteed if the same stencil is used in each cell, because of translation invariance (we shall call this a \emph{linear reconstruction}). On the other hand, such linear reconstruction may introduce spurious oscillations of may cause loss of positivity. If one wants to prevent appearing of spurious oscillations, then one can use high-order non-oscillatory reconstruction, such as ENO of WENO \cite{Shu98,CFR,carrillo2007nonoscillatory}. Similarly, positivity of the numerical solution can be maintained by positivity-preserving reconstructions \cite{campos2019algorithms,schmidt1988positivity}. Unfortunately these non-linear reconstructions destroy the translation invariance guaranteed by linear reconstruction, causing lack of conservation \cite{BCRY}.
	
	%	which maintains conservative quantities even in presence of not smooth solutions. When shocks appear, the use of high order polynomial interpolations such as Lagrange polynomials may lead to spurious oscillations near the discontinuities. On the other hand, using standard ENO or WENO reconstructions to prevent oscillation will destroy the conservation property of the scheme \cite{CFR}. This is because the use of stencils are biased near discontinuities and hence such irregular choice of the stencil leads to loss of conservation \cite{BCRY}. On the other hand, it can be more challenging if one wants to impose the positive preserving property together with the conservation property for the construction of high-order non-oscillatory reconstruction. 

	Numerous approaches have been introduced to treat such difficulties, and maintain conservation even with non-linear reconstruction. In particular, in the context of Vlasov-Poisson system several techniques were proposed. Among them, we mention the work based on primitive polynomial reconstruction \cite{FSB-vlasov-2001,CMS-vlasov-2010,QS}. In \cite{FSB-vlasov-2001}, the authors developed the Positive and Flux Conservative scheme. The authors considered essentially non-oscillatory method (ENO) or reconstructions based on positive limiters. In \cite{CMS-vlasov-2010}, the authors took a similar approach in the construction of primitive functions using splines. An weighted essentially non-oscillatory method (WENO) is also proposed to construct high order conservative non-oscillatory schemes in \cite{QS}. All these method are either one-dimensional or they provide a dimension by dimension interpolation. A general technique to restore conservation in semi-Lagrangian schemes was presented in \cite{RQT}. The technique has been also applied to the BGK model \cite{BCRY}. Although quite general, the technique suffers from CFL-type stability restrictions.
	
	In this paper we present a general technique which allows the construction of high-order conservative non-oscillatory semi-Lagrangian schemes in one and several dimensions, which are not affected by CFL-type restriction. Given cell averaged values on uniform grids, the idea is to compute sliding average of a precomputed non-oscillatory piecewise polynomial reconstruction. 
	
	The resulting reconstruction inherits the non-oscillatory properties of the precomputed polynomial and guarantees conservation of all discrete moments. The technique requires characteristic lines are parallel, which is the case of kinetic equations in which velocity space is discretized on the same velocity grid throughout space. An advantage of our method is that one can easily adopt previous techniques such as ENO, WENO, CWENO polynomials as our basic reconstructions.
	
	The mathematical properties of the proposed reconstruction are analyzed. In particular, we show that if we take CWENO polynomials of even degree $k$, for example $k=2,4$ \cite{C-2008,LPR1}, as a basic reconstruction, our approach gives $k+2$th order accuracy. Similar properties are also generalized to two dimensional reconstruction with CWENO polynomial in two space dimensions \cite{LPR1}. The description of technique is provided in the sense of cell averages, however, the idea can be extended to the point-wise framework in a similar manner.
	
	To test the quality of the proposed reconstruction, we apply it to the finite difference implicit semi-Lagrangian schemes for semi-linear hyperbolic system such as Xin-Jin model or Broadwell model. Applications to more general equations will be presented in a companion paper.
	%There have been numerous approaches such as ENO, WENO, CWENO methods, which give high-order conservative non-oscillatory reconstructions from the cell-averaged values to point values.  
	
	This paper is organized as follows:
	In section \ref{sec:Conservative reconstruction in 1D}, we present a general framework for our conservative reconstruction in 1D and its related properties. section \ref{sec:Conservative reconstruction in 2D} is devoted to the conserative reconstruction in 2D.
	Semi-Lagrangian methods are described in section \ref{sec:A semi-lagrangian scheme for semi-linear hyperbolic relaxation}. In section \ref{sec:Numerical test}, several numerical tests are presented to verify the accuracy of the proposed schemes and its capability in treating shocks arising in the relaxation of semi-linear hyperbolic system.

	\section{Conservative reconstruction in 1D}
	\label{sec:Conservative reconstruction in 1D}
	
	Let $u:\mathbb{R} \rightarrow \mathbb{R}$ be a smooth function and $\bar{u}:\mathbb{R} \rightarrow \mathbb{R}$ be a corresponding sliding average function:
	\[
	\frac{1}{\Delta x}\int_{x-\Delta x/2}^{x+\Delta x/2}u(y)\,dy = \bar{u}(x).
	\] 
	Given cell averages on uniform grids $x_i=i\Delta x$:
	\[
	\frac{1}{\Delta x}\int_{I_i}u(x)\,dx = \bar{u}_{i}, \quad I_i=[x_{i-\frac{1}{2}},x_{i+\frac{1}{2}}],
	\] 
	for each $i \in \mathcal{I}$, 
	our goal is to construct an approximation $Q(x)$ of the sliding average $\bar{u}(x)$, which is conservative in the sense that for any periodic function $\bar{u}(x)$ with period $L=N\Delta x$, $N\in\mathbb{N}$, we have \[
	\sum_{i=1}^N Q(x_i+\theta)= \sum_{i=1}^N \bar{u}(x_i),\quad \theta \in [0,1).\]
	
	Assume we have a piecewise smooth reconstruction $R(x)=\sum_i R_i(x) \chi_i(x) $, for $i \in \mathcal{I}$, where $\chi_i(x)$ denotes the characteristic function of cell $i$ and each $R_i(x)$ denotes a polynomial of degree $k$ and has the following properties:
	\begin{enumerate}
		\item High order accuracy in the approximation of $u(x)$:
		\begin{align}\label{R accuracy}
		u(x) = R_i(x) + \mathcal{O}\left((\Delta x)^{k+1}\right), \quad x \in I_i.
		\end{align}
		%		\item Essentially non-oscillatory 
		
		\item Conservation in the sense of cell averages:
		\[
		\frac{1}{\Delta x}\int_{x_{i-\frac{1}{2}}}^{x_{i+\frac{1}{2}}}R_i(x)\,dx = \bar{u}_{i}.
		\] 
	\end{enumerate}    
	Consider a shifted interval $[y_{i-\frac{1}{2}},y_{i+\frac{1}{2}}]$ whose center is $x_{i+\theta}\equiv x_i + \theta \Delta x$, $\theta \in [0,1)$, and denote by $\bar{u}(x_{i+\theta})$ the sliding average of $u$ at $x_{i + \theta}$ (see Fig. \ref{1d interpolation}). %value on $[y_{i-\frac{1}{2}},y_{i+\frac{1}{2}}]$.  
	We see that 
	$$x_{i-\frac{1}{2}} \leq y_{i-\frac{1}{2}}  < x_{i+\frac{1}{2}} \leq y_{i+\frac{1}{2}}  < x_{i+\frac{3}{2}}.$$
	
	Our strategy is to approximate $\bar{u}(x_{i+\theta})$ by $Q_{i + \theta} \equiv Q(x_{i+\theta})$, where
	\begin{align}\label{idea}
	%\bar{u}(x_{i+\theta})\approx
	Q_{i+\theta} = \frac{1}{\Delta x}\int_{y_{i-\frac{1}{2}}} ^{y_{i+\frac{1}{2}}}R(x)\,dx = \frac{1}{\Delta x}\int_{x_{i-\frac{1}{2} + \theta}} ^{x_{i+\frac{1}{2}+\theta}}R(x)\,dx ,
	\end{align} 
	which is equivalent to
	\begin{align}\label{Q decom}
	%\bar{u}(x_{i+\theta})\approx
	%		\frac{1}{\Delta x}\int_{y_{i-\frac{1}{2}}} ^{y_{i+\frac{1}{2}}}u(x)dx
	%		&=
	Q_{i + \theta} = \frac{1}{\Delta x}\int_{x_{i-\frac{1}{2}+\theta}} ^{x_{i+\frac{1}{2}}}R_i(x)\,dx
	+ \frac{1}{\Delta x}\int_{x_{i+\frac{1}{2}}} ^{x_{i+\frac{1}{2} + \theta}}R_{i+1}(x)\,dx.
	\end{align} 
	
	From now on, we consider $R_i(x)$ to be piecewise polynomials of degree $k$ of the form:
	\begin{align}\label{R 1d taylor}
	R_i(x)= \sum_{\ell=0}^k \frac{R_i^{(\ell)}}{\ell !}(x-x_i)^{\ell}.
	\end{align}
	\begin{figure}[htbp]
		\centering
		\begin{subfigure}[b]{0.55\linewidth}
			\includegraphics[width=1\linewidth]{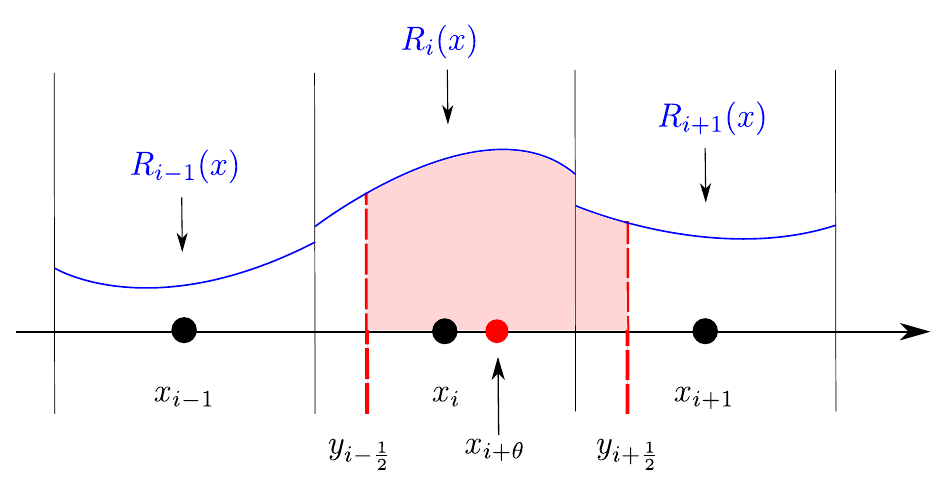}
		\end{subfigure}	
		\caption{Description of one-dimensional conservative reconstruction}\label{1d interpolation}  	
	\end{figure}
	Making use of \eqref{R 1d taylor} in the first term, we obtain
	\begin{align*}
	%\frac{1}{\Delta x}\int_{y_{i-\frac{1}{2}}} ^{x_{i+\frac{1}{2}}}R_i(x) dx
	%&=
	\frac{1}{\Delta x}\int_{x_{i-\frac{1}{2}+\theta}} ^{x_{i+\frac{1}{2}}}R_i(x)\,dx
	%&= \frac{1}{\Delta x}\sum_{\ell=0}^k\int_{x_{i-\frac{1}{2}+\theta}} ^{x_{i+\frac{1}{2}}} \frac{R_i^{(\ell)}}{\ell !}(x-x_i)^{\ell}dx\cr
	%&=\frac{1}{\Delta x}\int_{x_{i-\frac{1}{2}+\theta}} ^{x_{i+\frac{1}{2}}}R_i + R'_{i}(x-x_i) + \frac{1}{2}R''_{i}(x-x_i)^2dx\cr
	&= \frac{1}{\Delta x}\sum_{\ell=0}^k R_i^{(\ell)} \int_{x_{i-\frac{1}{2}+\theta}} ^{x_{i+\frac{1}{2}}} \frac{1}{\ell !}(x-x_i)^{\ell}\,dx= \sum_{\ell=0}^k (\Delta x)^{\ell} R_i^{(\ell)} \alpha_\ell(\theta)
	\end{align*} 
	%%%%%
	%Using the following relation 
	%\begin{align*}
	%	\frac{1}{\Delta x}\int_{x_{i-\frac{1}{2}+\theta}} ^{x_{i+\frac{1}{2}}} \frac{1}{\ell !}(x-x_i)^{\ell}dx &= \frac{1}{\Delta x}\int_{-\frac{1}{2}+\theta} ^{\frac{1}{2}} \frac{1}{\ell !}(z\Delta x)^{\ell}\Delta x dz \cr
	%	&= (\Delta x)^{\ell} \left[\frac{z^{\ell+1}}{(\ell+1)!}\right]_{-\frac{1}{2}+\theta} ^{\frac{1}{2}} =: (\Delta x)^{\ell}\alpha_\ell(\theta),
	%\end{align*}
	%Performing the computation, we obtain
	%%we can write
	%\begin{align*}
	%	%\frac{1}{\Delta x}\int_{y_{i-\frac{1}{2}}} ^{x_{i+\frac{1}{2}}}R_i(x) dx
	%	\frac{1}{\Delta x}\int_{x_{i-\frac{1}{2}+\theta}} ^{x_{i+\frac{1}{2}}}R_i(x) dx
	%	&:=\sum_{\ell=0}^k (\Delta x)^{\ell} R_i^{(\ell)} \alpha_\ell(\theta),
	%\end{align*}
	where
	\begin{align}\label{form of alpha}
	\alpha_\ell(\theta) = \frac{1- (2\theta -1)^{\ell+1}}{2^{\ell+1}(\ell+1)!}.%\quad \beta_\ell(\theta) = \frac{(2\theta -1)^{\ell+1} - (-1)^{\ell+1} }  {2^{\ell+1}(\ell+1)!}.
	\end{align}
	Similarly, we can write
	%\begin{align*}
	%\begin{split}
	%\frac{1}{\Delta x}\int_{x_{i+\frac{1}{2}}} ^{x_{i+\frac{1}{2}+\theta}} \frac{1}{\ell !}(x-x_{i+1})^{\ell}dx &= \frac{1}{\Delta x}\int_{-\frac{1}{2}} ^{-\frac{1}{2}+\theta} \frac{1}{\ell !}(z\Delta x)^{\ell}\Delta x dz \cr
	%&= (\Delta x)^{\ell} \left[\frac{z^{\ell+1}}{(\ell+1)!}\right]_{-\frac{1}{2}} ^{-\frac{1}{2}+\theta} =: (\Delta x)^{\ell}\beta_\ell(\theta),
	%\end{split}
	%\end{align*}
	%which gives 
	\begin{align*}
	%\frac{1}{\Delta x}\int_{x_{i+\frac{1}{2}}} ^{y_{i+\frac{1}{2}}}R_{i+1}(x) dx
	\frac{1}{\Delta x}\int_{x_{i+\frac{1}{2}}} ^{x_{i+\frac{1}{2} + \theta}}R_{i+1}(x)\,dx
	&:=\sum_{\ell=0}^k (\Delta x)^{\ell} R_{i+1}^{(\ell)} \beta_\ell(\theta),
	\end{align*} 
	with
	\begin{align}\label{form of beta}
	%\alpha_\ell(\theta) = \frac{1- (2\theta -1)^{\ell+1}}{2^{\ell+1}(\ell+1)!},\quad 
	\beta_\ell(\theta) = \frac{(2\theta -1)^{\ell+1} - (-1)^{\ell+1} }  {2^{\ell+1}(\ell+1)!}.
	\end{align}
	Letting $Q_{i+\theta}$ denote the approximation of $\bar{u}(x_{i+\theta})$, we obtain 
	\begin{align}\label{scheme}
	Q_{i+\theta}:= \sum_{\ell=0}^k (\Delta x)^{\ell} \left(\alpha_{\ell}(\theta)R_{i}^{(\ell)} +\beta_\ell(\theta)R_{i+1}^{(\ell)}\right).
	\end{align}
	%where
	Here, we note that $\alpha_\ell(\theta)$ and $\beta_\ell(\theta)$ satisfy the following relations:
	\begin{itemize}
		\item If $\ell=2n$, $0 \leq n$, is a even number
		\begin{align}\label{ab even}
		\begin{split}
		\alpha_\ell(\theta) + \beta_\ell(\theta)%&=\left[\frac{z^{2n+1}}{(2n+1)!}\right]_{-\frac{1}{2}+\theta} ^{\frac{1}{2}} + \left[\frac{z^{2n+1}}{(2n+1)!}\right]_{-\frac{1}{2}} ^{-\frac{1}{2}+\theta}\cr
		%&=\frac{1}{(2n+1)!}\left(\left[z^{2n+1}\right]_{-\frac{1}{2}+\theta} ^{\frac{1}{2}} + \left[z^{2n+1}\right]_{-\frac{1}{2}} ^{-\frac{1}{2}+\theta}\right)\cr
		=\frac{1}{(2n+1)!}\left(\frac{1}{2}\right)^{2n}.
		\end{split}
		\end{align}
		\item If $\ell=2n+1$, $0 \leq n$, is an odd number
		\begin{align}\label{ab odd}
		\alpha_\ell(\theta) + \beta_\ell(\theta)&= 0.%\left[\frac{z^{2n+2}}{(2n+2)!}\right]_{-\frac{1}{2}+\theta} ^{\frac{1}{2}} + \left[\frac{z^{2n+2}}{(2n+2)!}\right]_{-\frac{1}{2}} ^{-\frac{1}{2}+\theta}=0.
		\end{align}
	\end{itemize}
	We list the explicit form of $\alpha_\ell(\theta)$ and $\beta_\ell(\theta)$ for $\ell = 0,1,2$:
	\begin{align}\label{example ab}
	\begin{split}
	\alpha_0(\theta)&= 1-\theta,\quad	\alpha_1(\theta)= \frac{\theta(1-\theta)}{2},\quad	\alpha_2(\theta)= \frac{1-q(\theta)}{24}\cr
	\beta_0(\theta)&= \theta,\quad	\beta_1(\theta)= -\frac{\theta(1-\theta)}{2},\quad	\beta_2(\theta)= \frac{q(\theta)}{24},
	\end{split}
	\end{align}
	where $q(\theta)= 3\theta - 6\theta^2 + 4\theta^3$, for $\theta \in [0,1)$.

	\subsection{General Properties} 
	
	%Throught this paper, we focus on the case $k=2$. 
	In this section, we provide several properties of the reconstruction \eqref{scheme} such as accuracy, conservation and consistency to the classical interpolation with a suitable choice of $R_i^{(\ell)}$ in the reconstruction.
	
	Recalling the assumption \eqref{R accuracy}, we have a function $R(x)$ which approximates point values of $u$ and our goal is to approximate sliding average function $\bar{u}$ with our reconstruction \eqref{scheme}. Before checking the accuracy order, we note that the cell average function $\bar{u}(x)$ can be expressed in terms of derivatives of function $u(x)$:
	\begin{align}\label{1d ubar expansion}
	\begin{split}
	\bar{u}(x) &= \frac{1}{\Delta x} \int_{x-\Delta x/2}^{x+\Delta x/2}u(y)\,dy= \frac{1}{\Delta x} \int_{x-\Delta x/2}^{x+\Delta x/2} \sum_{\ell=0}^\infty \frac{u^{(\ell)}(x)}{\ell !}(y-x)^{\ell}\,dy
	% 		&= \frac{1}{\Delta x} \sum_{\ell=\text{even}}^\infty u^{(\ell)}(x) \int_{x-\Delta x/2}^{x+\Delta x/2}  \frac{1}{\ell !}(y-x)^{\ell}dy\cr
	=\sum_{\ell=\text{even}}^\infty \left(\Delta x\right)^{\ell} u^{(\ell)}(x) 
	\frac{1}{(\ell+1)!}\left(\frac{1}{2}\right)^{\ell}.
	\end{split}
	\end{align}
	Inserting $x=x_{i+\theta}$ into \eqref{1d ubar expansion}, we obtain
	\begin{align*}
	\begin{split}
	\bar{u}(x_{i+\theta}) &=\sum_{\ell=\text{even}}^\infty \left(\Delta x\right)^{\ell} u^{(\ell)}(x_{i+\theta}) 
	\frac{1}{(\ell+1)!}\left(\frac{1}{2}\right)^{\ell}
	%&\textcolor{red}{ =\sum_{\ell=\text{even}}^\infty \sum_{m=0}^\infty \left(\Delta x\right)^{\ell+m}  \frac{\theta^m}{m !}u_i^{(\ell+m)}
	%\frac{1}{(\ell+1)!}\left(\frac{1}{2}\right)^{\ell}}\cr
	=u^{(0)}(x_{i+\theta}) +\frac{\left(\Delta x\right)^{2}}{24} u^{(2)}(x_{i+\theta}) 
	+\frac{\left(\Delta x\right)^{4}}{1920}u^{(4)}(x_{i+\theta}) +\cdots.
	\end{split}
	\end{align*}

	With this formula, in the following proposition, we provide a sufficient condition for a polynomial reconstruction $Q_{i+\theta}$ to be a $(k+2)$-th order accurate approximation of $\bar{u}(x+\theta)$ for $\theta \in [0,1)$.
	\begin{proposition}\label{Consistency}
		Let $k\geq 0$ be an even integer, $R_i \in \mathbb{P}^k$ be given by \eqref{R 1d taylor}, and $u$ be a smooth function $u:\mathbb{R} \in \mathbb{R}$. Suppose we have a piecewise polynomial $R(x)=\sum_i R_i(x) \chi_i(x) $, which satisfies
		\begin{align}\label{consistency condition}
		\begin{split}
		u_i^{(\ell)}&=R_i^{(\ell)}+ \mathcal{O}(\Delta x^{k+2-\ell}), \quad \text{$0 \leq \ell\leq k$, whenever $\ell$ is an even integer}\cr
		u_i^{(\ell)}-u_{i+1}^{(\ell)}&=R_i^{(\ell)}-R_{i+1}^{(\ell)}+ \mathcal{O}(\Delta x^{k+2-\ell}), \quad \text{$0\leq \ell < k$, whenever $\ell$ is an odd integer.} 
		\end{split}
		\end{align}
		Then, the reconstruction $Q_{i+\theta}$ gives a $(k+2)$-th order approximation of the sliding average $\bar{u}(x_{i+\theta})$ for any $\theta \in [0,1)$. 	
		
	\end{proposition}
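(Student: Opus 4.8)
My plan is to decompose the error into a ``data error'' and a ``consistency error'' by introducing the idealized reconstruction built from the exact Taylor coefficients of $u$, namely
\[
P_{i+\theta} := \sum_{\ell=0}^{k}(\Delta x)^{\ell}\bigl(\alpha_{\ell}(\theta)\,u_{i}^{(\ell)}+\beta_{\ell}(\theta)\,u_{i+1}^{(\ell)}\bigr),
\]
obtained from \eqref{scheme} by replacing $R_i^{(\ell)}$ with $u_i^{(\ell)}=u^{(\ell)}(x_i)$, and then writing $Q_{i+\theta}-\bar u(x_{i+\theta})=(Q_{i+\theta}-P_{i+\theta})+(P_{i+\theta}-\bar u(x_{i+\theta}))$. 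The two parity relations \eqref{ab even}--\eqref{ab odd} for $\alpha_\ell,\beta_\ell$ will be the workhorse in estimating both pieces.

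For the data error I would subtract \eqref{scheme} and its $u$-analogue term by term and split the sum over $\ell$ by parity. For even $\ell\le k$, the first line of \eqref{consistency condition} gives $R_i^{(\ell)}-u_i^{(\ell)}$ and $R_{i+1}^{(\ell)}-u_{i+1}^{(\ell)}$ of size $\mathcal{O}(\Delta x^{k+2-\ell})$; since $\alpha_\ell(\theta),\beta_\ell(\theta)$ are bounded on $[0,1)$, multiplying by $(\Delta x)^\ell$ yields an $\mathcal{O}(\Delta x^{k+2})$ contribution. For odd $\ell$ (which runs only over $1,3,\dots,k-1$ because $k$ is even), \eqref{ab odd} gives $\beta_\ell(\theta)=-\alpha_\ell(\theta)$, so the $\ell$-th term collapses to $(\Delta x)^\ell\alpha_\ell(\theta)\bigl[(R_i^{(\ell)}-R_{i+1}^{(\ell)})-(u_i^{(\ell)}-u_{i+1}^{(\ell)})\bigr]$, which is $\mathcal{O}(\Delta x^{k+2})$ by the second line of \eqref{consistency condition}. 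Hence $Q_{i+\theta}-P_{i+\theta}=\mathcal{O}(\Delta x^{k+2})$, and this is the step that isolates exactly where the hypotheses are used.

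For the consistency error I would observe that, by the very computation producing \eqref{form of alpha}--\eqref{form of beta}, $P_{i+\theta}$ is precisely the sliding average over $[x_{i-1/2+\theta},x_{i+1/2+\theta}]$ of the piecewise polynomial equal on $I_j$ to the degree-$k$ Taylor polynomial $T_j$ of $u$ at $x_j$. Subtracting from $\bar u(x_{i+\theta})=\frac{1}{\Delta x}\int_{x_{i-1/2+\theta}}^{x_{i+1/2+\theta}}u\,dx$ and splitting the integral at $x_{i+1/2}$ gives
\[
\bar u(x_{i+\theta})-P_{i+\theta}=\frac{1}{\Delta x}\int_{x_{i-1/2+\theta}}^{x_{i+1/2}}(u-T_i)\,dx+\frac{1}{\Delta x}\int_{x_{i+1/2}}^{x_{i+1/2+\theta}}(u-T_{i+1})\,dx.
\]
On each interval of integration $|x-x_j|\le\Delta x/2$, so Taylor's theorem (with $u$ smooth) gives $(u-T_j)(x)=\frac{u^{(k+1)}(x_j)}{(k+1)!}(x-x_j)^{k+1}+\mathcal{O}(\Delta x^{k+2})$ uniformly in $\theta$; integrating, the remainders contribute $\mathcal{O}(\Delta x^{k+2})$ and the leading monomials produce exactly $(\Delta x)^{k+1}\bigl(\alpha_{k+1}(\theta)u_i^{(k+1)}+\beta_{k+1}(\theta)u_{i+1}^{(k+1)}\bigr)$. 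Since $k+1$ is odd, \eqref{ab odd} forces $\beta_{k+1}(\theta)=-\alpha_{k+1}(\theta)$, so this equals $(\Delta x)^{k+1}\alpha_{k+1}(\theta)\bigl(u_i^{(k+1)}-u_{i+1}^{(k+1)}\bigr)=\mathcal{O}(\Delta x^{k+2})$ by the mean value theorem. Combined with the previous paragraph, this proves $Q_{i+\theta}-\bar u(x_{i+\theta})=\mathcal{O}(\Delta x^{k+2})$.

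The real obstacle is the consistency error: the crude bound $u-T_j=\mathcal{O}(\Delta x^{k+1})$ only delivers $(k+1)$-th order, so the extra order is obtained only by isolating the leading $(k+1)$-st Taylor term and exploiting its oddness together with the cancellation $\alpha_{k+1}+\beta_{k+1}=0$ and the smoothness of $u$. The even-index identity \eqref{ab even} plays the complementary structural role, namely it is what makes $P_{i+\theta}$ reproduce the even-order expansion \eqref{1d ubar expansion} of the sliding average in the first place; one could instead carry this out by collecting like powers of $\Delta x$ after Taylor-expanding $u_i^{(\ell)},u_{i+1}^{(\ell)}$ about $x_{i+\theta}$, but the integral form above keeps the computation transparent.
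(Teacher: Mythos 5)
Your proof is correct. The first half (the ``data error'' $Q_{i+\theta}-P_{i+\theta}$) is in substance identical to the opening step of the paper's proof: the paper's formula \eqref{Q explicit 1d} is exactly the parity split you describe, using \eqref{ab odd} to turn the odd-$\ell$ terms into differences $R_i^{(\ell)}-R_{i+1}^{(\ell)}$ so that the two lines of \eqref{consistency condition} can be applied where they are needed. Where you genuinely diverge is in the ``consistency error'' $P_{i+\theta}-\bar u(x_{i+\theta})$. The paper proceeds algebraically: it Taylor-expands $u_{i+1}^{(\ell)}$ about $x_i$, collects powers of $\Delta x$ into coefficients $\lambda_\ell(\theta)=\alpha_\ell(\theta)+\sum_{m\le\ell}\beta_m(\theta)/(\ell-m)!$, shows the stray order-$(k+1)$ boundary term also equals $\lambda_{k+1}(\theta)u_i^{(k+1)}$ via $\alpha_{k+1}+\beta_{k+1}=0$, and then recognizes $\sum_\ell(\Delta x)^\ell\lambda_\ell(\theta)u_i^{(\ell)}$ as the expansion \eqref{1d ubar expansion} of $\bar u(x_{i+\theta})$ by explicit computation of the $\lambda_\ell$. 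You instead return to the integral definition underlying \eqref{scheme}: $P_{i+\theta}$ is the sliding average of the piecewise Taylor polynomial, so the error is the sliding average of $u-T_j$, whose order-$(k+1)$ leading term integrates to $(\Delta x)^{k+1}\bigl(\alpha_{k+1}(\theta)u_i^{(k+1)}+\beta_{k+1}(\theta)u_{i+1}^{(k+1)}\bigr)$ and cancels to one extra order by \eqref{ab odd} and the smoothness of $u^{(k+1)}$. Your route is more conceptual and shorter, since it never needs the closed form of the $\lambda_\ell$ nor the term-by-term match with \eqref{1d ubar expansion}; the paper's computation has the side benefit of exhibiting the full expansion of $Q_{i+\theta}$, which makes the structure of the error explicit. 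The same parity cancellation at $\ell=k+1$ (possible only because $k$ is even) is the crux in both arguments, and you identify it correctly.
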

	\begin{proof}
		For detailed proof, see \ref{Appendix accuracy proof}.
	\end{proof}
	
	\begin{remark}\label{rmk 2.1}
		%	\begin{enumerate}
		%		\item Instead of $R(x)$, if we have another $S(x)$ such that \eqref{consistency condition} hold, the reconstruction $Q$ from the $S(x)$ satisfies the Proposition \ref{Consistency}.
		%		\item 
		%	\end{enumerate}	
		\begin{enumerate}
			\item The reconstruction $Q_{i+\theta}$ approximates $\bar{u}(x_{i+\theta})$ on the basis of cell average values $\{\bar{u}_i\}_{i \in \mathcal{I}}$. Similarly, we can extend the idea of reconstruction to the framework of point values, which are used in conservative finite difference methods in section \ref{sec:A semi-lagrangian scheme for semi-linear hyperbolic relaxation}. 
			\item We also note that the second condition in \eqref{consistency condition} can be easily satisfied. Let $k\geq 0$ be an even integer, and consider a function $u(x) \in C^{k+2}(\mathbb{R})$, and its primitive function $U(x):= \int_{-\infty}^{x} u(y)\,dy \in  C^{k+3}(\mathbb{R})$. We first look for a polynomial $P_i(x)\in \mathbb{P}^{k+1}$ such that
			\[P_i(x_{i-\frac{1}{2}+j})=U(x_{i-\frac{1}{2}+j}), \quad j=-r, \cdots, s+1, \quad r+s=k.
			\]
			Then, the classical interpolation theory gives
			\[
			U(x) - P_i(x)= \frac{1}{(k+2)!} U^{(k+2)}(\xi_i) \prod_{j=-r}^{s+1}(x-x_{i-\frac{1}{2}+j}), \quad \xi_i \in (x_{i-\frac{1}{2}-r},x_{i+\frac{1}{2}+s}), 
			\]
			its first order derivative $p_i(x) \equiv P_i'(x) \in \mathbb{P}^k$ interpolates $u$ in the sense of cell-average:
			\begin{align*}
			\frac{1}{\Delta x}\int_{x_{i+j}-\Delta x/2}^{x_{i+j}+\Delta x/2}p_i(y)\,dy = \bar{u}_{i+j}, \quad j=-r,\cdots,s,
			\end{align*} 	
			and, for $0 \leq \ell \leq k$, its $(\ell+1)$-th derivative  $p_i^{(\ell)}(x) \equiv P_i^{(\ell+1)}(x) \in \mathbb{P}^{k-\ell}$ satisfies
			\begin{align}\label{p derivative}
			u^{(\ell)}(x) - p_i^{(\ell)}(x)=U^{(\ell+1)}(x) - P_i^{(\ell+1)}(x)= \frac{1}{(k+2)!} U^{(k+2)}(\xi_i) \frac{d^{\ell+1}}{dx^{\ell+1}} \left(\prod_{j=-r}^{s+1}(x-x_{i-\frac{1}{2}+j})\right). 
			\end{align} 	
			Similarly, we can find polynomials $p_{i+1}(x)\in \mathbb{P}^k$ and $P_{i+1}(x)\in \mathbb{P}^{k+1}$ such that
			\begin{align*}
			u^{(\ell)}(x+\Delta x) - p_{i+1}^{(\ell)}(x+\Delta x)&=U^{(\ell+1)}(x+\Delta x) - P_{i+1}^{(\ell+1)}(x+\Delta x)\cr
			&= \frac{1}{(k+2)!} U^{(k+2)}(\xi_{i+1}) \frac{d^{\ell+1}}{dx^{\ell+1}} \left(\prod_{j=-r}^{s+1}(x+\Delta x-x_{i+1-\frac{1}{2}+j})\right), 
			\end{align*} 	
			where $\xi_{i+1}\in (x_{i+\frac{1}{2}-r},x_{i+\frac{3}{2}+s})$.
			% 		Since the following equality holds for $x=x_i$:
			% 		\begin{align*}
			% 		\frac{d^{\ell+1}}{dx^{\ell+1}} \left(\prod_{j=-r}^{s+1}(x-x_{i-\frac{1}{2}+j})\right) &= \frac{d^{\ell+1}}{dx^{\ell+1}} \left(\prod_{j=-r}^{s+1}(x+\Delta x-x_{i+1-\frac{1}{2}+j})\right)= \mathcal{O}\left((\Delta x)^{k+1-\ell}\right),
			% 		\end{align*}
			Then, the relation $U^{(k+2)}(\xi_i)  -  U^{(k+2)}(\xi_{i+1}) =\mathcal{O}\left(\Delta x\right)$, gives
			\begin{align*}
			&\left(u^{(\ell)}(x_i) - p_i^{(\ell)}(x_i)\right) - \left(u^{(\ell)}(x_{i+1}) - p_{i+1}^{(\ell)}(x_{i+1})\right)\cr
			&\qquad= \frac{1}{(k+2)!} \left(U^{(k+2)}(\xi_i)  -  U^{(k+2)}(\xi_{i+1}) \right) \left\{\frac{d^{\ell}}{dx^\ell} \left(\prod_{j=-r}^{s+1}(x-x_{i-\frac{1}{2}+j})\right)\right\}_{x=x_i}
			= \mathcal{O}\left((\Delta x)^{k+2-\ell}\right).
			\end{align*}
			\item If $R_i^{(\ell)}$ can be represented with a Lipschitz function $F_\ell$:
			\begin{align*}
			R_i^{(\ell)} =  F_\ell\left(\bar{u}_{i-r},\cdots,\bar{u}_{i+s}\right)
			\end{align*}
			which satisfies
			\begin{align*}
			F_\ell\left(\bar{u}_{i-r},\cdots,\bar{u}_{i+s}\right)- u^{(\ell)}(x_i) &= \mathcal{O}\left((\Delta x)^{k+1-\ell}\right),
			\end{align*}
			the condition \eqref{consistency condition} is also satisfied. For more details, we refer to \ref{appendix remark proof}.
		\end{enumerate} 	
		
		%	Here, we note that the second condition in \eqref{consistency condition} does not hold if we take stencils differently to construct $R_i(x)$ and $R_{i+1}(x)$ for odd integers $0 \leq \ell \leq k$.
		%	
		%	An easy example is to use stencils $S_i=\{i-1,i,i+1\}$ and $S_{i+1}=\{i+1,i+2,i+3\}$ for $R_i(x)$ and $R_{i+1}(x)$, respectively. 
	\end{remark}
	
	In Proposition \ref{Consistency}, we see that the choice of an even integer $k \ge 0$ leads to the improvement of accuracy. In such a case, we show that the reconstruction $Q_{i+\theta}$ based on linear weights coincides with the classical interpolation. 
	\begin{proposition}\label{1d consistency prop}
		Let $k \ge 0$ be an even integer with $k=2r$. For each $i\in \mathcal{I}$, assume that we have a basic reconstruction $R_i(x) \in \mathbb{P}^k$, which is a polynomial of degree $k$ in \eqref{R 1d taylor} and interpolates the function $u$ in the sense of cell averages:
		\begin{align}\label{Lag condition}
		\begin{split}
		\frac{1}{\Delta x}\int_{x_{i+j-\frac{1}{2}}}^{x_{i+j+\frac{1}{2}}}R_i(x)\,dx = \bar{u}_{i+j},\quad -r\leq j \leq r.
		\end{split}
		\end{align}
		with a symmetric stencil $S_i:=\{i-r,i-r+1,\cdots,i+r\}$. Then, the reconstruction $Q_{i+\theta}$ in \eqref{scheme} based on $R_i$ and $R_{i+1}$, is the Lagrange polynomial $L(x)$ that interpolates $\bar{u}_{i+j}$, for $-r \leq j\leq r+1$, where $x = x_i + \theta \Delta x$ and $\theta \in [0,1)$.
	\end{proposition}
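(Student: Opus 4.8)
The plan is to prove the identity by combining linearity in the cell averages with uniqueness of polynomial interpolation, which reduces the whole statement to one short computation with nodal polynomials. Write $k=2r$. First note that by \eqref{scheme}, since $\alpha_\ell,\beta_\ell$ have degree $\ell+1\le k+1$, the quantity $Q_{i+\theta}$ is a polynomial of degree at most $k+1$ in $\theta$, hence in $x=x_i+\theta\Delta x$, so it makes sense to compare it with $L\in\mathbb{P}^{k+1}$. Moreover $R_i$ (built on $S_i$) and $R_{i+1}$ (built on $S_{i+1}$) depend linearly on the $k+2=2r+2$ cell averages $\bar u_{i-r},\dots,\bar u_{i+r+1}$, hence so does $Q_{i+\theta}$ via \eqref{scheme}, and $L$ depends linearly on the same $2r+2$ numbers. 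The map $\mathbb{P}^{k+1}\to\mathbb{R}^{2r+2}$ sending $v$ to its cell averages $(\bar v_{i-r},\dots,\bar v_{i+r+1})$ is a linear bijection (it is injective: if all those averages vanish, the primitive of $v$, a polynomial of degree $\le k+2$, has the $2r+3$ interface nodes as roots and is zero; and both spaces have dimension $2r+2$). Hence it suffices to prove $Q_{i+\theta}=L(x_{i+\theta})$ when the data are the cell averages of a fixed $v\in\mathbb{P}^{k+1}$.

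So fix $v\in\mathbb{P}^{k+1}$. By \eqref{1d ubar expansion} its sliding average $\bar v$ is again a polynomial of degree $\le k+1$, and $\bar v(x_{i+j})=\bar v_{i+j}$; since $L$ is the unique element of $\mathbb{P}^{k+1}$ with $L(x_{i+j})=\bar v_{i+j}$ for $-r\le j\le r+1$, we get $L=\bar v$. For the other side put $E_i:=R_i-v$ and $E_{i+1}:=R_{i+1}-v$, both in $\mathbb{P}^{k+1}$. By \eqref{Lag condition}, $\int_{I_{i+j}}E_i\,dx=0$ for $-r\le j\le r$, so the antiderivative $\mathcal{E}_i$ of $E_i$ takes a common value at the $2r+2$ interface nodes of $S_i$; normalising that value to zero and using $\deg\mathcal{E}_i\le k+2$ gives $\mathcal{E}_i(x)=a_i\prod_{m=-r}^{r+1}(x-x_{i-\frac12+m})$, and similarly $\mathcal{E}_{i+1}(x)=a_{i+1}\prod_{m=-r+1}^{r+2}(x-x_{i-\frac12+m})$ (normalising its common value at the interface nodes of $S_{i+1}$ to zero). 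Because $R_i,R_{i+1}\in\mathbb{P}^{k}$ carry no $x^{k+1}$ term, the coefficient of $x^{k+1}$ in both $E_i$ and $E_{i+1}$ equals $-c$, where $c$ is the coefficient of $x^{k+1}$ in $v$; comparing the coefficient of $x^{k+2}$ on the two sides of the representations of $\mathcal{E}_i$ and $\mathcal{E}_{i+1}$ then gives $a_i=a_{i+1}=-c/(k+2)=:a$.

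Now $x_{i+\frac12}$ belongs to both stencils' interface sets, so $\mathcal{E}_i(x_{i+\frac12})=\mathcal{E}_{i+1}(x_{i+\frac12})=0$, and for $\theta\in[0,1)$ the integration interval in \eqref{idea} crosses exactly that interface. Splitting as in \eqref{Q decom} and subtracting $\bar v(x_{i+\theta})=\frac1{\Delta x}\int_{x_{i-\frac12+\theta}}^{x_{i+\frac12+\theta}}v\,dx$ gives
\begin{align*}
Q_{i+\theta}-\bar v(x_{i+\theta})
&=\frac1{\Delta x}\Big(\int_{x_{i-\frac12+\theta}}^{x_{i+\frac12}}E_i\,dx+\int_{x_{i+\frac12}}^{x_{i+\frac12+\theta}}E_{i+1}\,dx\Big)\\
&=\frac1{\Delta x}\big(\mathcal{E}_{i+1}(x_{i+\frac12+\theta})-\mathcal{E}_i(x_{i-\frac12+\theta})\big).
\end{align*}
Since the interface nodes are equispaced with step $\Delta x$, the unit shift $x\mapsto x+\Delta x$ advances the index in the product by one, so a direct evaluation gives $\mathcal{E}_{i+1}(x_{i+\frac12+\theta})=\mathcal{E}_i(x_{i-\frac12+\theta})=a(\Delta x)^{2r+2}\prod_{m=-r}^{r+1}(\theta-m)$. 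Hence $Q_{i+\theta}=\bar v(x_{i+\theta})=L(x_{i+\theta})$, which by the reduction above proves the proposition.

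The heart of the argument is the cancellation in the displayed identity: one must recognise the two reconstruction errors as scalar multiples of the \emph{nodal} polynomials of the two (shifted) stencils and verify that the two scalars coincide — precisely where the degree drop $\deg R_i=k<k+1$ enters — so that the boundary terms of the antiderivatives match after the shift by $\Delta x$. The assumption that $k$ is even is used both for the symmetry of $S_i,S_{i+1}$ and, more essentially, to guarantee $\deg Q_{i+\theta}=k+1=\deg L$; for odd $k$ the two nodal polynomials would have different degrees and this cancellation would break down.
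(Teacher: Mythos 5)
Your proof is correct, but it takes a genuinely different route from the paper's. The proof in \ref{Appendix consistency proof} is fully explicit: it realises $R_i$ as the derivative of the Lagrange interpolant of the primitive function, evaluates the two integrals in \eqref{Q decom} as ratios of products, splits the result into four sums $I_1+I_2+I_3+I_4$, and simplifies with the partition-of-unity identity for the Lagrange basis until the closed form \eqref{consistency last Q} emerges and is recognised as $L(x_{i+\theta})$. You instead exploit linearity of both $Q_{i+\theta}$ and $L$ in the data $(\bar u_{i-r},\dots,\bar u_{i+r+1})$ to reduce to data generated by some $v\in\mathbb{P}^{k+1}$ (the cell-average map on $\mathbb{P}^{k+1}$ being a bijection onto $\mathbb{R}^{2r+2}$), identify $L$ with the sliding average $\bar v$, and then show $Q_{i+\theta}-\bar v(x_{i+\theta})=0$ because the antiderivatives of $R_i-v$ and $R_{i+1}-v$ are the \emph{same} scalar multiple of the respective translated nodal polynomials, so the two boundary terms cancel under the shift by $\Delta x$. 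All the steps check out (the dimension counts, the normalisation of $\mathcal{E}_i$ at the common interface value, the matching of leading coefficients via $\deg R_i\le k$, and the evaluation $\mathcal{E}_{i+1}(x_{i+\frac12+\theta})=\mathcal{E}_i(x_{i-\frac12+\theta})=a(\Delta x)^{k+2}\prod_{m=-r}^{r+1}(\theta-m)$). What your route buys is a short, essentially computation-free argument that makes transparent exactly where the hypothesis $\deg R_i=k<k+1$ is used; what the paper's computation buys is the explicit representation \eqref{consistency last Q} of $Q_{i+\theta}$ as a Lagrange form, which is of independent use. One inaccuracy in your closing commentary only: for odd $k$ the two nodal polynomials would still both have degree $k+2$; the real obstruction is that a symmetric stencil with $k+1$ cells does not exist, so \eqref{Lag condition} as written presupposes $k=2r$. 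This does not affect the proof itself, which treats only the even case claimed in the statement.
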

	%\begin{proof}
	The proof is based on the observation that interpolation in the sense of the cell averages is equivalent to point-wise interpolation of sliding averages at cell center, which in turn,  is equivalent to point-wise interpolation of primitive function at cell edges. A detailed proof, based on explicit representation obtained by Lagrange interpolation, is given in \ref{Appendix consistency proof}.
	%\end{proof}
	
	\begin{remark}
		For $k=0$, the only possible choice is to set $R_i(x)\equiv \bar{u}_i$ and the resulting reconstruction $Q_{i+\theta}$ reduces to the linear interpolation constructed from two points $\bar{u}_i$ and $\bar{u}_{i+1}$. 
	\end{remark}

	In the following proposition, we show that total mass is preserved for any $\theta$-shifted summation, $\theta \in [0,1)$.
	\begin{proposition}\label{prop con 1d}
		Assume that $R_i(x)$ satisfies
		\begin{align}\label{conservation condition}
		\begin{split}
		\frac{1}{\Delta x}\int_{x_{i-\frac{1}{2}}}^{x_{i+\frac{1}{2}}}R_i(x)\, dx = \bar{u}_{i}, \quad i \in \mathcal{I}.
		\end{split}
		\end{align}
		Then, for periodic functions $\bar{u}(x)$ with period $L=N \Delta x,\, N \in \mathbb{N}$ 
		\begin{align}\label{shifted summation 1d}
		\sum_{i=1}^N Q_{i+\theta} = \sum_{i=1}^N \bar{u}_i, 
		\end{align}
		for any $\theta \in [0,1)$.  
	\end{proposition}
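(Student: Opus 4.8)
The plan is to work directly from the integral representation \eqref{idea} of $Q_{i+\theta}$ together with the elementary observation that the shifted cells $[x_{i-\frac{1}{2}+\theta},x_{i+\frac{1}{2}+\theta}]$, $i=1,\dots,N$, tile exactly one period of length $L=N\Delta x$. First I would sum \eqref{idea} over $i$; since the right endpoint of the $i$-th shifted interval is the left endpoint of the $(i+1)$-th, the sum telescopes into a single integral,
\[
\sum_{i=1}^N Q_{i+\theta} = \frac{1}{\Delta x}\int_{x_{\frac{1}{2}+\theta}}^{x_{N+\frac{1}{2}+\theta}} R(x)\,dx ,
\]
with no leftover boundary contributions. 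Because $\bar u$ is $L$-periodic, the cell averages satisfy $\bar u_{i+N}=\bar u_i$, and since each $R_i$ is assembled from a fixed stencil of these cell averages, the piecewise polynomial $R$ itself has period $L$. Hence the integral above, taken over an interval of length $L$, equals the integral over $[x_{\frac{1}{2}},x_{N+\frac{1}{2}}]$, and
\[
\sum_{i=1}^N Q_{i+\theta} = \frac{1}{\Delta x}\int_{x_{\frac{1}{2}}}^{x_{N+\frac{1}{2}}} R(x)\,dx = \sum_{i=1}^N \frac{1}{\Delta x}\int_{x_{i-\frac{1}{2}}}^{x_{i+\frac{1}{2}}} R_i(x)\,dx = \sum_{i=1}^N \bar u_i ,
\]
the last equality being exactly the conservation hypothesis \eqref{conservation condition}.

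Alternatively — and this is the version I would actually write out in full, since it reuses identities already derived in the text — one argues purely algebraically from \eqref{scheme}. Summing over $i=1,\dots,N$ and using the $L$-periodicity $R_{i+N}^{(\ell)}=R_i^{(\ell)}$ to re-index the $\beta$-term, one obtains $\sum_{i=1}^N Q_{i+\theta} = \sum_{\ell=0}^k (\Delta x)^\ell\bigl(\alpha_\ell(\theta)+\beta_\ell(\theta)\bigr)\sum_{i=1}^N R_i^{(\ell)}$. By \eqref{ab odd} the odd-$\ell$ terms drop out, while by \eqref{ab even} the term $\ell=2n$ carries the coefficient $\tfrac{1}{(2n+1)!}(\tfrac12)^{2n}$. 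On the other hand, integrating the Taylor form \eqref{R 1d taylor} of $R_i$ over its own cell and invoking \eqref{conservation condition} shows that $\bar u_i$ is the same linear combination $\sum_{2n\le k}(\Delta x)^{2n}\tfrac{1}{(2n+1)!}(\tfrac12)^{2n} R_i^{(2n)}$. Summing over $i$ then yields \eqref{shifted summation 1d}; note in particular that the right-hand side is independent of $\theta$, as it must be.

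I do not anticipate a genuine obstacle: the identity is a conservation statement essentially forced by the averaging construction. The only two points requiring care are (i) making explicit that the shifted cells tile precisely one period, so that the telescoped sum leaves no boundary terms, and (ii) recording that $R$ inherits the period $L$ from $\bar u$. Point (ii) uses only $\bar u_{i+N}=\bar u_i$ together with the translation-covariance of the stencil map defining $R_i$; crucially it does not require invariance under a single-cell shift, which the nonlinear (ENO/WENO/CWENO) reconstructions do not enjoy — this is precisely why conservation is nontrivial for those reconstructions and yet still holds here.
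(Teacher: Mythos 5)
Your proposal is correct. Your first (integral/telescoping) argument is essentially the paper's own proof of Proposition \ref{prop con 1d}: the paper sums the decomposition \eqref{Q decom}, uses periodicity to re-index the $R_{i+1}$ piece as $\frac{1}{\Delta x}\int_{x_{i-\frac12}}^{x_{i-\frac12+\theta}}R_i(x)\,dx$, recombines the two pieces into the full cell integral of $R_i$, and invokes \eqref{conservation condition}; your phrasing of this as ``the shifted cells tile one period and a periodic function has translation-invariant integral over a period'' is the same computation. Your second, algebraic route is genuinely different in flavor: it works at the level of the coefficients $R_i^{(\ell)}$, cancelling the $\theta$-dependence via $\alpha_\ell+\beta_\ell$ using \eqref{ab even}--\eqref{ab odd} and then identifying $\bar u_i$ with $\sum_{2n\le k}(\Delta x)^{2n}\frac{1}{(2n+1)!}2^{-2n}R_i^{(2n)}$ from \eqref{R 1d taylor} and \eqref{conservation condition}. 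This buys an explicit explanation of \emph{why} the result is $\theta$-independent (the odd moments cancel and the even ones reproduce exactly the in-cell average), at the cost of being tied to the Taylor representation \eqref{R 1d taylor}, whereas the integral argument is representation-free. Both versions share the same implicit hypothesis as the paper, namely that the reconstruction map is translation-covariant on the periodic data so that $R_{i+N}^{(\ell)}=R_i^{(\ell)}$; you state this explicitly, which the paper leaves tacit under ``we used the periodicity.''
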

	\begin{proof}
		Since $\theta$ does not depend on $i$,
		\begin{align*}
		\sum_{i=1}^N Q_{i+\theta}&= \sum_{i=1}^N \left(\frac{1}{\Delta x}\int_{x_{i-\frac{1}{2}+\theta}} ^{x_{i+\frac{1}{2}}} R_i(x) \,dx
		+ \frac{1}{\Delta x}\int_{x_{i+\frac{1}{2}}} ^{x_{i+\frac{1}{2}+\theta}} R_{i+1}(x) \,dx \right)\cr
		&= \sum_{i=1}^N \left(\frac{1}{\Delta x}\int_{x_{i-\frac{1}{2}+\theta}} ^{x_{i+\frac{1}{2}}} R_i(x) \,dx
		+ \frac{1}{\Delta x}\int_{x_{i-\frac{1}{2}}} ^{x_{i-\frac{1}{2}+\theta}} R_i(x) \,dx \right)\cr
		&= \sum_{i=1}^N \frac{1}{\Delta x}\int_{x_{i-\frac{1}{2}}} ^{x_{i+\frac{1}{2}}} R_i(x) \,dx=\sum_{i=1}^N \bar{u}_i.
		\end{align*} 	
		Here we used the periodicity to write the second line and \eqref{conservation condition} for the last line. 
	\end{proof}
	
	\begin{remark}
		We remark that this summation preserving property can be useful when our reconstruction is applied to the semi-Lagrangian treatment of a constant convection term, where	characteristic curves are given by parallel lines for each grid point.	In such cases, the proposed reconstruction attains conservation at a discrete level, hence it can be applied to the simulation of physical models satisfying this conservation property. Considerable examples are the BGK type models of the Boltzmann equation of rarefied gas dynamics. We can also apply this to the splitting method for the Vlasov-Poisson system in plasma physics. These problems will be considered in the second part of this paper.  
	\end{remark}

	In the following section, we will show that our reconstruction \eqref{scheme} inherits some properties of the basic reconstruction $R_i(x)$ such as non-oscillatory property and positivity.
	\subsection{Choice of the basic reconstruction $R$}
	\subsubsection{Non-oscillatory property}
	We consider some specific choices of the basic reconstruction $R$. In particular, we consider CWENO \cite{C-2008}, \cite{LPR1} and CWENOZ \cite{CPSV-2017}. As an illustration, we consider the case $k=2$, and we take CWENO23 reconstruction in \cite{LPR1} as a basic reconstruction $R$. We start from a polynomial of degree two $P_{OPT}^i(x)$ which interpolates $\bar{u}_{i-1},\bar{u}_{i},\bar{u}_{i+1}$ in the sense of cell averages:
	\[
	\frac{1}{\Delta x}\int_{x_{i+l-\frac{1}{2}}}^{x_{i+l+\frac{1}{2}}}P_{OPT}^i(x)\,dx = \bar{u}_{i+l}, \quad l=-1,0,1.
	\] 
	Then, this polynomial can be written as 
	$P_{OPT}^i(x)=\tilde{u}_{i} + \tilde{u}'_{i}(x-x_i) + \frac{1}{2}\tilde{u}''_{i}(x-x_i)^2$
	with
	\begin{align*}
	\tilde{u}_{i}=\bar{u}_{i}-\frac{1}{24}(\bar{u}_{i+1}-2\bar{u}_{i}+\bar{u}_{i-1}),\quad  	\tilde{u}'_{i}=\frac{\bar{u}_{i+1}-\bar{u}_{i-1}}{2\Delta x}, \quad		\tilde{u}''_{i}=\frac{\bar{u}_{i+1}-2\bar{u}_i+\bar{u}_{i-1}}{({\Delta x})^2}, 
	\end{align*}  
	and it gives a third order accurate reconstruction of $u$ in $I_i$:
	\[
	P_{OPT}^i(x)= u(x) + \mathcal{O}(\Delta x)^3, \quad \forall x\in I_i.
	\]
	In the CWENO23 reconstruction, to avoid oscillations, we use the  following convex combination:
	\begin{align}\label{conv combi}
	R_i(x)=\sum_k \omega_k^i P_k^i(x), \quad \sum_k \omega_k^i =1, \quad \omega_k^i \geq 0, \quad k\in \{L,C,R\}
	\end{align}
	where $P_L^i$ and $P_R^i$ are first order polynomials such that
	\[
	\int_{x_{i+l-\frac{1}{2}}}^{x_{i+l+\frac{1}{2}}}P_L^i(x)\,dx = \bar{u}_{i+l}, \quad l=-1,0, \quad\quad \int_{x_{i+l-\frac{1}{2}}}^{x_{i+l+\frac{1}{2}}}P_R^i(x)\,dx = \bar{u}_{i+l}, \quad l=0,1,
	\]
	which gives
	\[P_L^i(x)=\bar{u}_{i}+\frac{\bar{u}_{i}-\bar{u}_{i-1}}{\Delta x}(x-x_i),\quad 
	P_R^i(x)=\bar{u}_{i}+\frac{\bar{u}_{i+1}-\bar{u}_{i}}{\Delta x}(x-x_i).\]
	The second order polynomial $P_C(x)$ is obtained from
	\begin{align*}
	P_{OPT}^i(x)= C_L P_L^i(x) + C_R P_R^i(x) + C_C P_C^i(x),
	\end{align*}
	with a choice of positive coefficients such that 
	\[
	C_L,C_R,C_C \geq 0,\quad C_L=C_R, \quad C_L + C_C + C_R=1.
	\]
	A common choice is to set $C_L=C_R=1/4$, $C_C=1/2$. 
	% More precisely, 
	% \begin{align*}
	% 	P_C^i(x)&= 2 P_{OPT}^i(x) - \frac{1}{2}\left(P_L^i(x)+P_R^i(x)\right)\cr
	% 	&\,= \bar{u}_i-\frac{1}{12}(\bar{u}_{i+1}-2\bar{u}_i+\bar{u}_{i-1}) + \frac{\bar{u}_{i+1}-\bar{u}_{i-1}}{2\Delta x}(x-x_i) + \frac{\bar{u}_{i+1}-2\bar{u}_i + \bar{u}_{i-1}}{({\Delta x})^2}(x-x_i)^2.
	% \end{align*}
	The non-linear weights $\omega_k^i$ in \eqref{conv combi} are chosen as follows: 
	\begin{align}\label{CWENO omega}
	\omega_k^i=\frac{\alpha_k^i}{\sum_\ell \alpha_\ell^i}, \quad \alpha_k^i=\frac{C_i}{(\epsilon + {\beta}_k^i)^p}, \quad k,\ell \in \{L,C,R\}
	\end{align}
	where the constant $\epsilon$ is used to avoid the denominator vanishing and the constant $p$ weights the smoothness indicator. We use $\epsilon=(\Delta x)^2$ or $10^{-6}$ and $p=2$ in the numerical tests. An explicit expression of smoothness indicators is the following:
	\begin{align*}
	{\beta}_L^i &= (\bar{u}_i - \bar{u}_{i-1})^2, \quad {\beta}_R^i = (\bar{u}_{i+1} - \bar{u}_i)^2,\cr
	{\beta}_C^i &= \frac{13}{3}(\bar{u}_{i+1} - 2\bar{u}_i + \bar{u}_{i-1})^2 +\frac{1}{4}(\bar{u}_{i+1} - \bar{u}_{i-1})^2.  
	\end{align*}
	% \[
	% {\beta}_k^i=\sum_{l\geq 1} \int_{x_{i-\frac{1}{2}}}^{x_{i+\frac{1}{2}}} {\Delta x}^{2l-1} (\partial_x^{(l)} {P_k^i}(x))^2dx, \quad k \in \{L,C,R\}.
	% \]
	We refer to \cite{LPR2} for details on CWENO reconstruction. 
	% Here we write its explicit form:
	As a consequence, the reconstruction \eqref{conv combi} is third order accurate in smooth region and automatically becomes second order accurate in the presence of discontinuity.	
	The final form of the CWENO23 reconstruction $R_i(x)$ is given by
	\begin{align}\label{R 1d}
	R_i(x) = R_i^{(0)} + R_i^{(1)}(x-x_i) + \frac{1}{2}R^{(2)}_i(x-x_i)^2,
	\end{align}  
	where
	\begin{align*}
	&R_i^{(0)}= \bar{u}_i  -  \frac{1}{12}\omega_C^i\left(\bar{u}_{i+1}-2\bar{u}_i+\bar{u}_{i-1}\right) \cr
	&R_i^{(1)}=\omega_L^i \frac{\bar{u}_{i}-\bar{u}_{i-1}}{\Delta x} + \omega_R^i \frac{\bar{u}_{i+1}-\bar{u}_{i}}{\Delta x} + \omega_C^i\frac{\bar{u}_{i+1}-\bar{u}_{i-1}}{2\Delta x}\cr
	&R_i^{(2)}=2\omega_C^i \frac{\bar{u}_{i+1}-2\bar{u}_i+\bar{u}_{i-1}}{(\Delta x)^2}.		
	\end{align*}
	
	The CWENO23Z reconstruction also takes the form \eqref{R 1d}, but its non-linear weights are calculated as follows:
	\begin{align}\label{CWENOZ omega}
	\omega_k^i=\frac{\alpha_k^i}{\sum_\ell \alpha_\ell^i}, \quad \alpha_k^i=C_i\left(1+\frac{\tau}{\epsilon + {\beta}_k^i}\right)^p, \quad k,\ell \in \{L,C,R\}
	\end{align}
	where $p\geq 1$ and $\tau=\left|\beta_R^i-\beta_L^i\right|$.
	
	%	We note that $R_i(x)$ satisfies 
	%	\[
	%	\frac{1}{\Delta x}\int_{x_{i-\frac{1}{2}}} ^{x_{i+\frac{1}{2}}} R_i(x)dx = \bar{u}_i ,\quad  \forall j, 
	%	\]
	%	and hence
	%	\[
	%	\sum_i \int_{x_{i-\frac{1}{2}}} ^{x_{i+\frac{1}{2}}} R_i(x)dx = \sum_i \bar{u}_i \Delta x. 
	%	\]
	%	But, it is not clear whether a WENO scheme  preserves total mass due to the $j$ dependency of non-linear weights $\omega_k^i$.
	%Since this reconstruction is from cell averages to point values, 
	%
	%In semi-Lagrangian schemes, if there is no external force, characteristic foot is evenly spaced so that shifted summation needs to be conserved. In view of this, 
	
	\begin{remark}\label{CWENO23andZ}
		In \ref{appendix CWENO23andZ}, we prove that the condition \eqref{consistency condition} in Proposition \ref{Consistency} is satisfied both for CWENO23 and CWENO23Z if a given $u$ function is smooth enough. This shows that the corresponding reconstruction \eqref{R 1d} becomes a fourth order accurate reconstruction for smooth solutions.
	\end{remark}
	
	\begin{figure}[ht]
		\centering
		\begin{subfigure}[b]{0.45\linewidth}
			\includegraphics[width=1\linewidth]{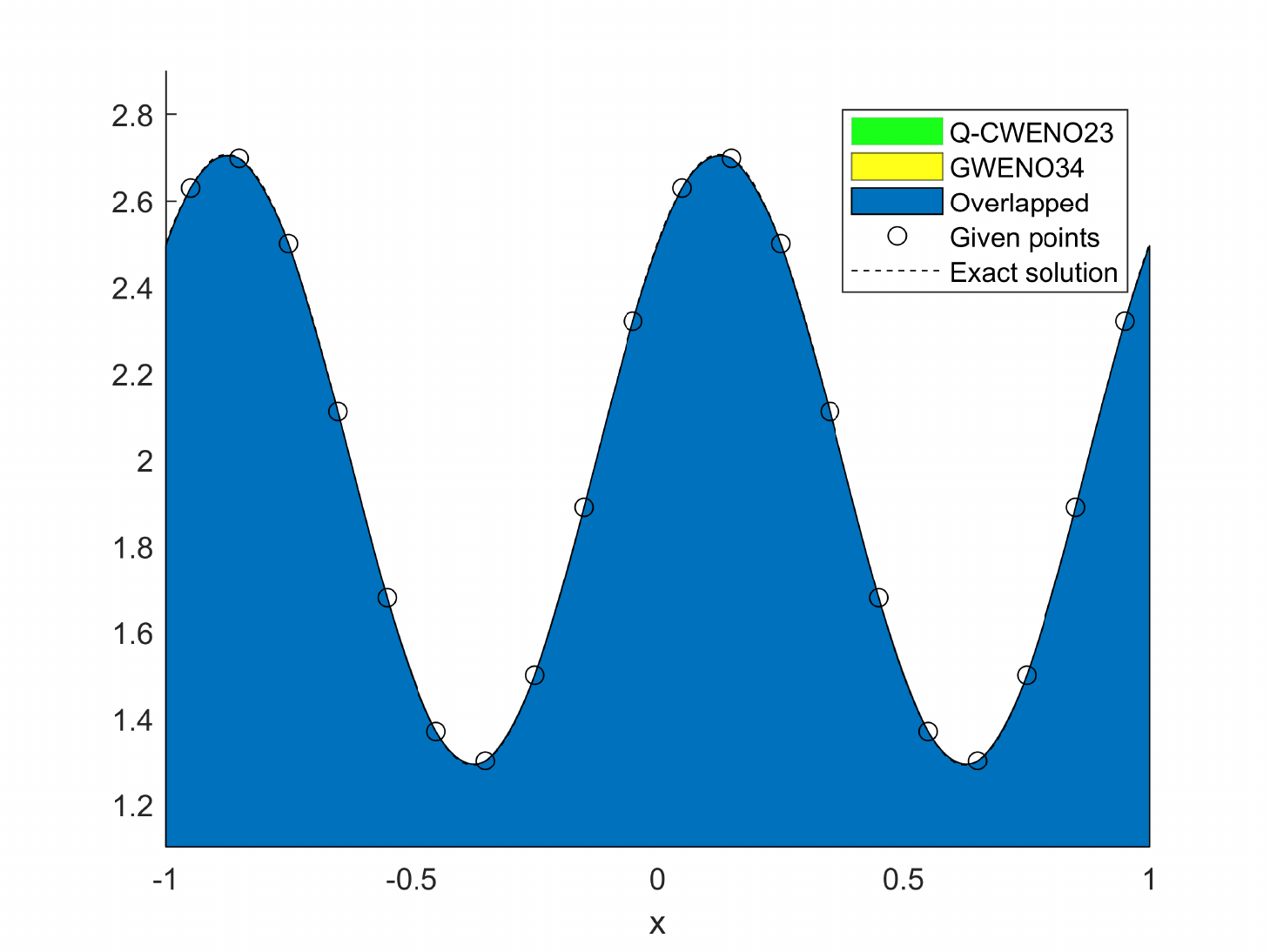}
			\subcaption{Comparison of $\bar{u}_1$ is given by \eqref{function for comparison2} and its reconstructions.}\label{fig: comparison c}
			%		\label{QCWE23 vs GWENO23}
		\end{subfigure}		
		\begin{subfigure}[b]{0.45\linewidth}
			\includegraphics[width=1\linewidth]{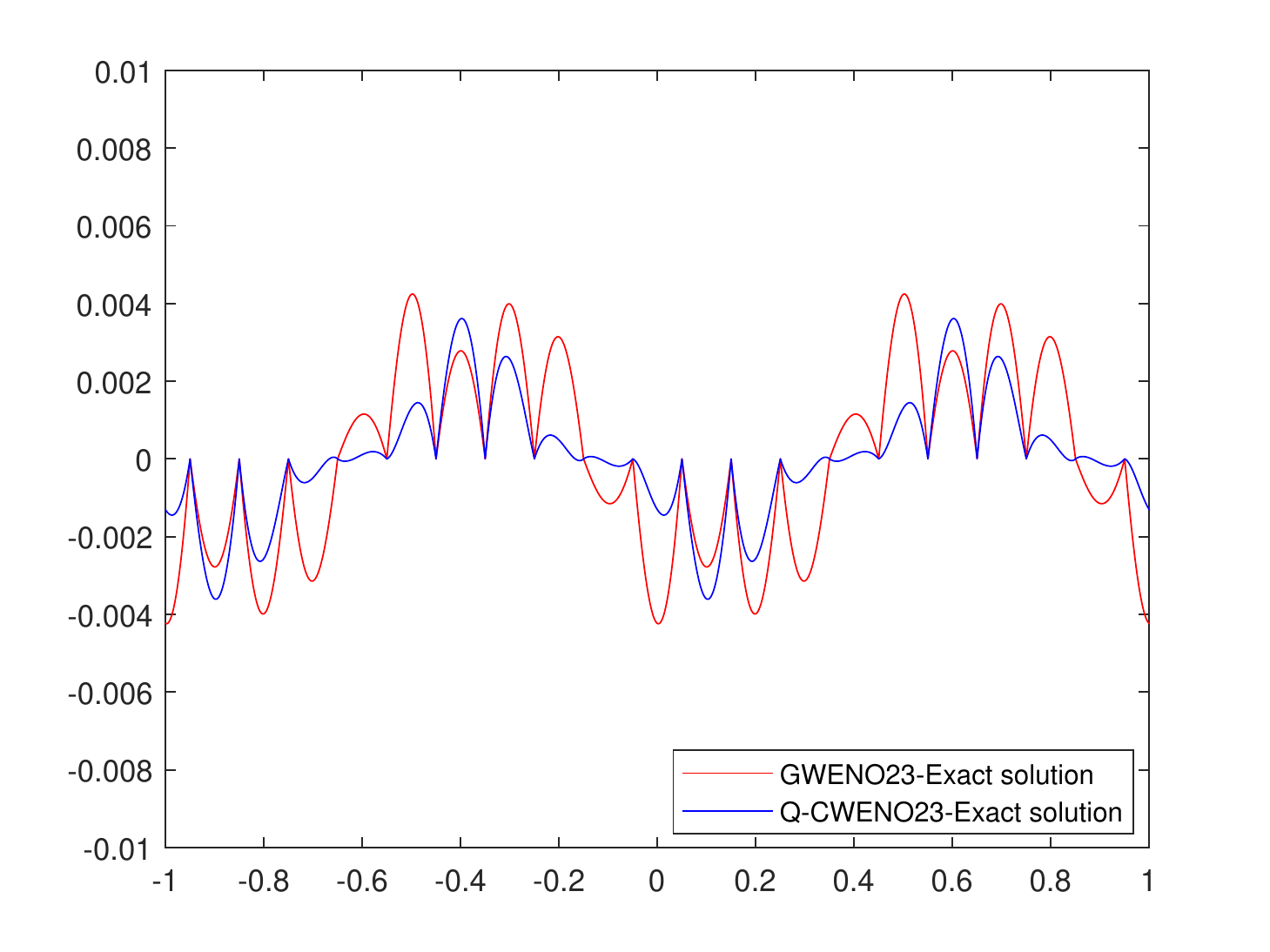}
			\subcaption{Errors between reconstructions and exact solutions \eqref{function for comparison2}.
			}\label{fig: comparison d}
			%		\label{QCWE23 vs GWENO23}
		\end{subfigure}		
		\begin{subfigure}[b]{0.45\linewidth}
			\includegraphics[width=1\linewidth]{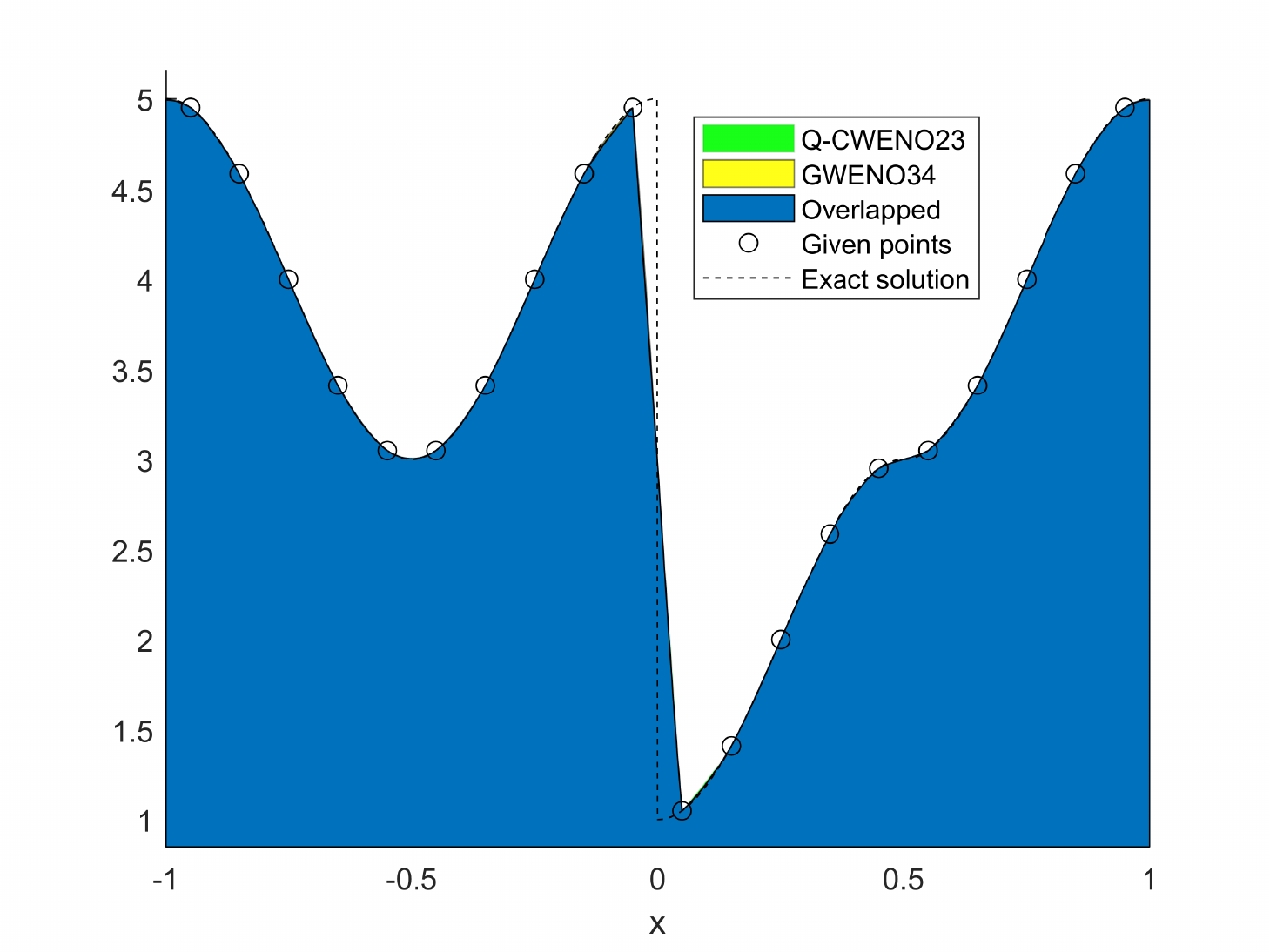}
			\subcaption{Comparison of $\bar{u}_2$ is given by \eqref{function for comparison} and its reconstructions.}\label{fig: comparison a}
			%		\label{QCWE23 vs CWENO23}
		\end{subfigure}		
		\begin{subfigure}[b]{0.45\linewidth}
			\includegraphics[width=1\linewidth]{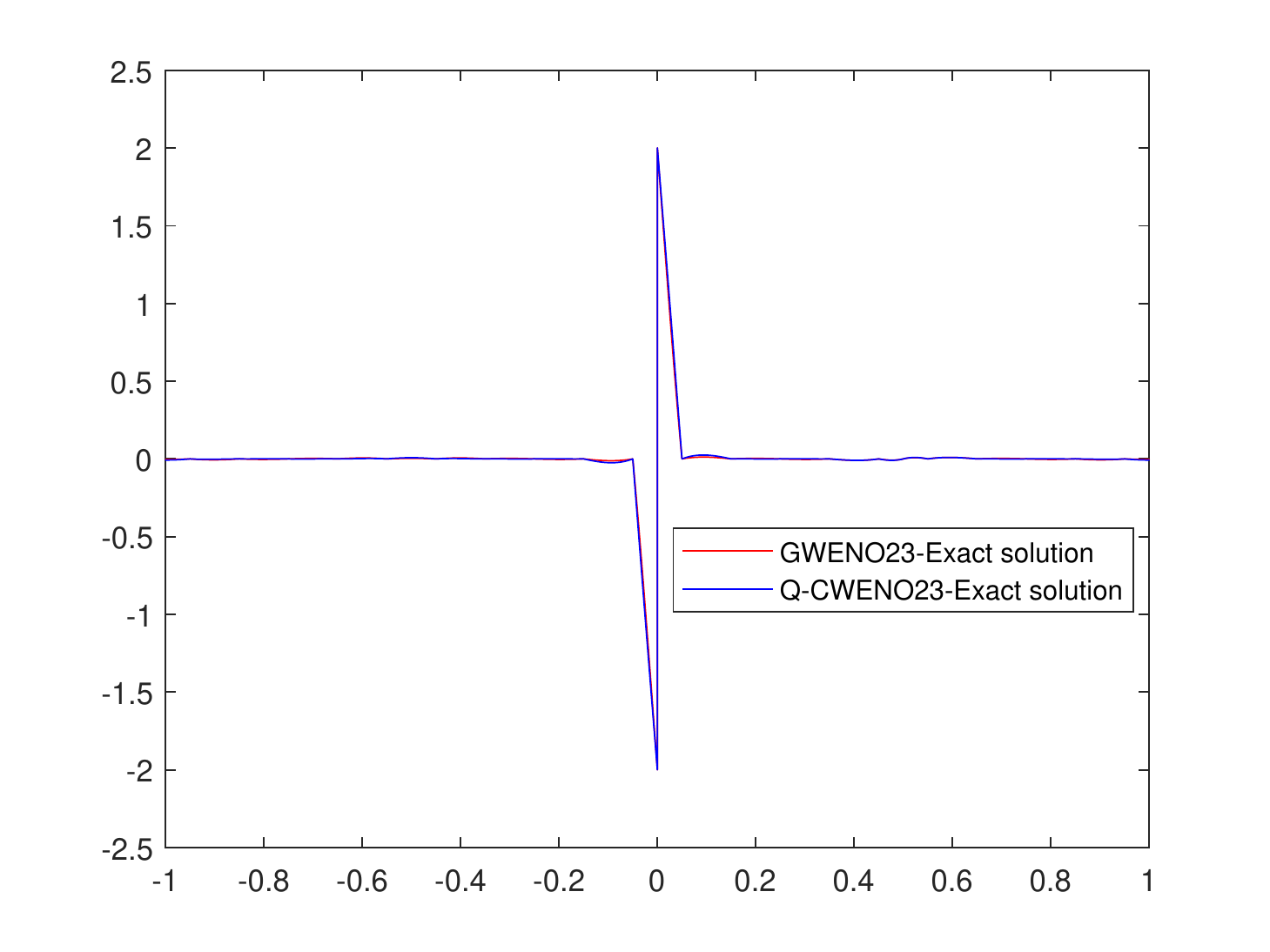}
			\subcaption{Errors between reconstructions and exact solutions \eqref{function for comparison}.}\label{fig: comparison b}
			%		\label{QCWE23 vs GWENO23}
		\end{subfigure}
		\caption{Comparison of reconstructions between Q-CWENO23 and GWENO34. In Figs.\ref{fig: comparison c} and \ref{fig: comparison a}, dashed lines are exact solutions $\bar{u}_1(x)$, $\bar{u}_2{x}$ and black circles are given values on grid points of $\bar{u}_1{x}$, $\bar{u}_2{x}$ in \eqref{function for comparison2} and \eqref{function for comparison}.} \label{fig: comparison}% it appears that both solutions give similar approximations, however, the area obtained from a GWENO23 reconstruction have relatively large difference $(3.844 \times 10^{-3})$ with respect to given data, which results in the loss of conservation of numerical solution.}  \label{fig: comparison}	
	\end{figure}
	In Fig. \ref{fig: comparison}, we compare the proposed conservative reconstruction \eqref{R 1d} using CWENO23 \cite{LPR} with a generalized WENO reconstruction originally proposed in \cite{CFR} in the context of semi-Lagrangian method. We shall denote it by GWENO34 obtained with four points, which achieves fourth order accuracy in the smooth solution. Hereafter we denote by Q-CWENO23 the conservative reconstruction based on CWENO23. To compute solutions with a few points $N=20$, we set $\epsilon=1$ for Q-CWENO23. We consider the following sliding average functions on the periodic domain $[-1,1]$:
	% 	\begin{align}\label{function for comparison1}
	% 	\bar{u}(x)= 
	% 	\begin{cases}
	% 	4+\sin(\pi(x+0.5)), \quad -1\leq x<0\\
	% 	2+\sin(\pi(x-0.5)), \quad 0\leq x < 1,
	% 	\end{cases}
	% 	\end{align} 
	\begin{align}\label{function for comparison2}
	\bar{u}_1(x)= 4+\sin(2\pi x)+\cos(2\pi x), \quad -1\leq x < 1,
	\end{align} 
	\begin{align}\label{function for comparison}
	\bar{u}_2(x)= 
	\begin{cases}
	3+ 2\sin^2(\pi(x-0.5)), \quad -1\leq x<0\\
	3- 2\sin^2(\pi(x-0.5)), \quad 0 \leq x< 0.5\\
	3+ 2\sin^2(\pi(x-0.5)), \quad 0.5 \leq x< 1.
	\end{cases}
	\end{align}
	In Fig. \ref{fig: comparison}, one can observe that Q-CWENO23 and GWENO34 show similar results. 
	For a smooth function $\bar{u}_1(x)$ in \eqref{function for comparison2}, Figs. \ref{fig: comparison c} and \ref{fig: comparison d} implies that errors are relatively small, while for a discontinuous function $\bar{u}_2(x)$ in \eqref{function for comparison}, Figs. \ref{fig: comparison a}, \ref{fig: comparison b} show that errors are concentrated near a discontinuity. 

	In order to clarify the difference between solutions, in Table \ref{tab2}, we report the maximal relative conservation errors between the summation of reconstructed points $Q_{i+\theta}$ and that of given points $\bar{u}_\ell(x_i)$, $\ell=1,2$, over $\theta=0,0.001,\dots,0.999$ using the following measure: 
	\begin{align}\label{Err formula}
	{Err}_\ell= \frac{max_\theta \left|\sum_i Q_{i+\theta} - \sum_i \bar{u}_\ell(x_i)\right|}{\sum_i \bar{u}_\ell(x_i)}, \quad \ell=1,2.
	\end{align}
	From the Table \ref{tab2}, we conclude that Q-CWENO23 recovers the reference summation of $\bar{u}_\ell(x_i)$ for any values of $\theta \in[0,1)$ even in the presence of a discontinuity. The errors for Q-CWENO23 and GWENO34 are both within machine precision for the smooth function $\bar{u}_1$. In this case, the two reconstructions almost coincides the standard Lagrangian interpolation which is conservative. When the function is not smooth as in $\bar{u}_2$, Q-CWENO23 is still fully conservative within machine precision, hence it verifies Proposition \ref{prop con 1d}. Numerical experiments in which conservation is relevant will be discussed in section \ref{sec:Numerical test}.
	
	% \begin{figure}[t]
	% 	\centering
	% 	\begin{subfigure}[b]{0.49\linewidth}
	% 		\includegraphics[width=1\linewidth]{figures/QCWENO3and5recon0318_eps}
	% 		\subcaption{$x\in[-1,1]$}
	% 	\end{subfigure}		
	% 	\begin{subfigure}[b]{0.49\linewidth}
	% 		\includegraphics[width=1\linewidth]{figures/QCWENO3and5reconzoom0318_eps}
	% 		\subcaption{$x\in[-0.5,0]$}
	% 	\end{subfigure}		
	% 	\caption{Comparison of basic reconstructions CWENO23(Blue), CWENO35(Magenta) and CWENOZ5(Green). For each basic reconstruction, we plot $Q_{i+\theta}$ for each $i$ and $\theta \in [0.1)$. Red circle: cell average value $\bar{u}_i$ on each grid point $x_i$ based on the function: $\bar{u}(x)= 2+\sin(4\pi x)+ 3\chi_{x>-0.25} -3\chi_{x>0.3}$.}\label{fig recon CWENO3}  	
	% \end{figure}
	
	\begin{table}[t]
		\centering
		{\begin{tabular}{|ccc|}
				\hline
				\multicolumn{1}{ |c| }{Reconstruction}& \multicolumn{1}{ c|  }{Fig.\ref{fig: comparison c}} & \multicolumn{1}{ c| }{Fig.\ref{fig: comparison a}} \\ 
				\hline	
				\multicolumn{1}{ |c|  }{Q-CWENO23}& \multicolumn{1}{ |c|  }{
					6.6613e-16
				} & \multicolumn{1}{ c|  }{5.0753e-16 }  \\
				\hline	
				\multicolumn{1}{ |c|  }{GWENO34}& \multicolumn{1}{ |c|  }{ 6.6613e-16
				} & \multicolumn{1}{ c|  }{9.2766e-04}  \\
				\hline
		\end{tabular}}
		\captionof{table}{Relative conservation errors \eqref{Err formula} of the reconstruction for $\bar{u}_1$ \eqref{function for comparison2} and $\bar{u}_2$ \eqref{function for comparison}.} \label{tab2}
	\end{table}
	
	\begin{remark}
		As an example for the case $k=4$, we can use CWENO35 \cite{C-2008} as a basic reconstruction. The explicit form of $R_i^{(ell)},\, \ell=0,1,2,3,4$ is presented in \eqref{Appendix CWENO35}.
	\end{remark}
	
	%%%%%%%%%%%%%%%%%%%%%%%%%%%%%%%%%%%%%%%%%%%%%%%%%%%%%%%%%%%%%%%%%%%%%%%%%%%%%%%%%%%%%%%%%%%%%%%%%%%%%%%%%%%%%%%%%%%%
	%%%%%%%%%%%%%%%%%%%%%%%%%%%%%%%%%%%%%%%%%%%%%%%%%%%%%%%%%%%%%%%%%%%%%%%%%%%%%%%%%%%%%%%%%%%%%%%%%%%%%%%%%%%%%%%%%%%%
	%%%%%%%%%%%%%%%%%%%%%%%%%%%%%%%%%%%%%%%%%%%%%%%%%%%%%%%%%%%%%%%%%%%%%%%%%%%%%%%%%%%%%%%%%%%%%%%%%%%%%%%%%%%%%%%%%%%%

	\subsubsection{Positive preserving property}
	In several circumstances the solution one is looking for is a non negative function. This is the case, for example, of distribution function in kinetic equations. In such cases it may be important to preserve at a discrete level the positivity of the solution. Standard piecewise polynomial reconstructions (linear reconstructions) do not preserve positivity, however several techniques exist in the literature that can be adopted to ensure positivity in the reconstruction (\cite{campos2019algorithms,schmidt1988positivity}). Here we remark that if the basic reconstruction $R$ is positive preserving, that the sliding average of $R$ will provide a conservative and positivity preserving reconstruction. Given a non-negative basic reconstructions $R_i(x) \ge0$, obtained from positive cell averages $\bar{u}_i>0 \> \forall i$, the positivity of the reconstruction \eqref{scheme} directly follows from \eqref{idea}. Here we verify this with a numerical example. Let us consider a basic reconstruction $R_i$, obtained from positive cell averages $\{\bar{u}_i\}$, using the {\em Positive Flux Conservative} (PFC) technique explained in \cite{FSB-vlasov-2001}: 
	\begin{align}\label{parabola}
	R_i(x)=R_i^{(0)} + R_i^{(1)} \left( x-x_i \right) + \frac{R_i^{(2)}}{2} \left( x-x_i \right)^2,\quad x\in [x_{i-1/2},x_{i+1/2}].
	\end{align}
	Here $R_i^{(0)}$, $R_i^{(1)}$ and $R_i^{(2)}$ are given by
	\begin{align*}
	R_i^{(0)}&=\bar{u}_{i} - \frac{\varepsilon_i^+ (\bar{u}_{i+1} - \bar{u}_{i}) - \varepsilon_i^- (\bar{u}_{i} - \bar{u}_{i-1})}{24},\cr 
	R_i^{(1)}&= \frac{\varepsilon_i^+ (\bar{u}_{i+1} - \bar{u}_{i}) + \varepsilon_i^- (\bar{u}_{i} - \bar{u}_{i-1})}{2 \Delta x},\,\cr	R_i^{(2)}&= \frac{\varepsilon_i^+ (\bar{u}_{i+1} - \bar{u}_{i}) - \varepsilon_i^- (\bar{u}_{i} - \bar{u}_{i-1})}{(\Delta x)^2},
	\end{align*} 
	where slope limiters $\varepsilon_i^+$ and $\varepsilon_i^-$ are defined by
	\begin{align}\label{limiter}
	\begin{split}
	\varepsilon_i^+ &= \begin{cases}
	\min\big( 1; 2\bar{u}_i / (\bar{u}_{i+1} - \bar{u}_i) \big), \quad\quad\qquad\qquad\qquad \,\,\,\, \text{if $\bar{u}_{i+1} - \bar{u}_i>0$}\\
	\min \big( 1; -2(\bar{u}_\infty - \bar{u}_i) / (\bar{u}_{i+1} - \bar{u}_i) \big), \,\quad\qquad\qquad \text{if $\bar{u}_{i+1} - \bar{u}_i<0$}
	\end{cases}\cr
	\varepsilon_i^- &= \begin{cases}
	\min\big( 1; 2(\bar{u}_{\infty} - \bar{u}_i) \big) / (\bar{u}_{i} - \bar{u}_{i-1}) \big), \quad\qquad\qquad\,\,\, \text{if $\bar{u}_{i} - \bar{u}_{i-1}>0$}\\
	\min \big( 1; -2 \bar{u}_i / (\bar{u}_{i} - \bar{u}_{i-1}) \big), \,\qquad\qquad\qquad\qquad \text{if $\bar{u}_{i} - \bar{u}_{i-1}>0$}
	\end{cases},
	\end{split}
	\end{align}
	with $\bar{u}_\infty:= \max_i \bar{u}_i$. 
	This basic reconstruction has been proposed in \cite{FSB-vlasov-2001} in order to preserve positivity of the solution and maintain essentially non oscillatory property.
	
	Hereafter we denote by Q-Parabola the reconstruction \eqref{scheme} based on \eqref{parabola}. In Fig. \ref{fig: comparison pp}, we compare Q-Parabola with Q-CWENO23 reconstructions. For this, we use the following sliding average function on the periodic domain $[-1,1]$:
	\begin{align}\label{function for comparison pp}
	\bar{u}_3(x)= 
	\begin{cases}
	10^{-5} + 0.1\left(1 + \sin(\pi x)\right), \quad -0.5\leq x \leq 0.4\\
	10^{-5} ,\qquad\qquad\qquad\qquad\quad \text{otherwise}
	\end{cases}.	
	\end{align}	
	In Fig. \ref{fig: comparison pp 1} and \ref{fig: comparison pp 1 zoom}, the difference between two reconstructions appears near $[-0.65,-0.55]$ and $[0.45,0.55]$. In case of Q-Parabola, the use of positive limiter \eqref{limiter} always guarantees the positive reconstructions for any $x\in [-1,1]$, while very small oscillations appear near discontinuities. 
	On the other hand, although Q-CWENO23 always prevents spurious oscillation, negative solutions may occur depending on the choice of $\epsilon$ used for non-linear weights \eqref{CWENO omega}. In this case, we took $\epsilon=10^{-6}$, and  Eq.~\eqref{CWENO omega} of CWENO23 returns weights very close to the  linear ones on the cell $[-0.6,-0.5]$, which gives negative values on the interval $[-0.65,-0.55]$. 
	We remark that if CWENO23 reconstructions give linear polynomials on two consecutive cells, the corresponding reconstruction \eqref{scheme} is to be positive between the two cell centers. Consequently, the suitable choice of $\epsilon$ can enable Q-CWENO23 to avoid both negative reconstructions and spurious oscillations. 
	Other possible ways to guarantee the positivity of basic reconstructions are to adopt a linear scaling approach \cite{zhang2010maximum,zhang2011maximum,friedrich2019maximum} or use positive limiters \cite{filbet2001comparison,CMS-vlasov-2010}.

	\begin{figure}[t]
		\centering
		\begin{subfigure}[b]{0.49\linewidth}
			\includegraphics[width=1\linewidth]{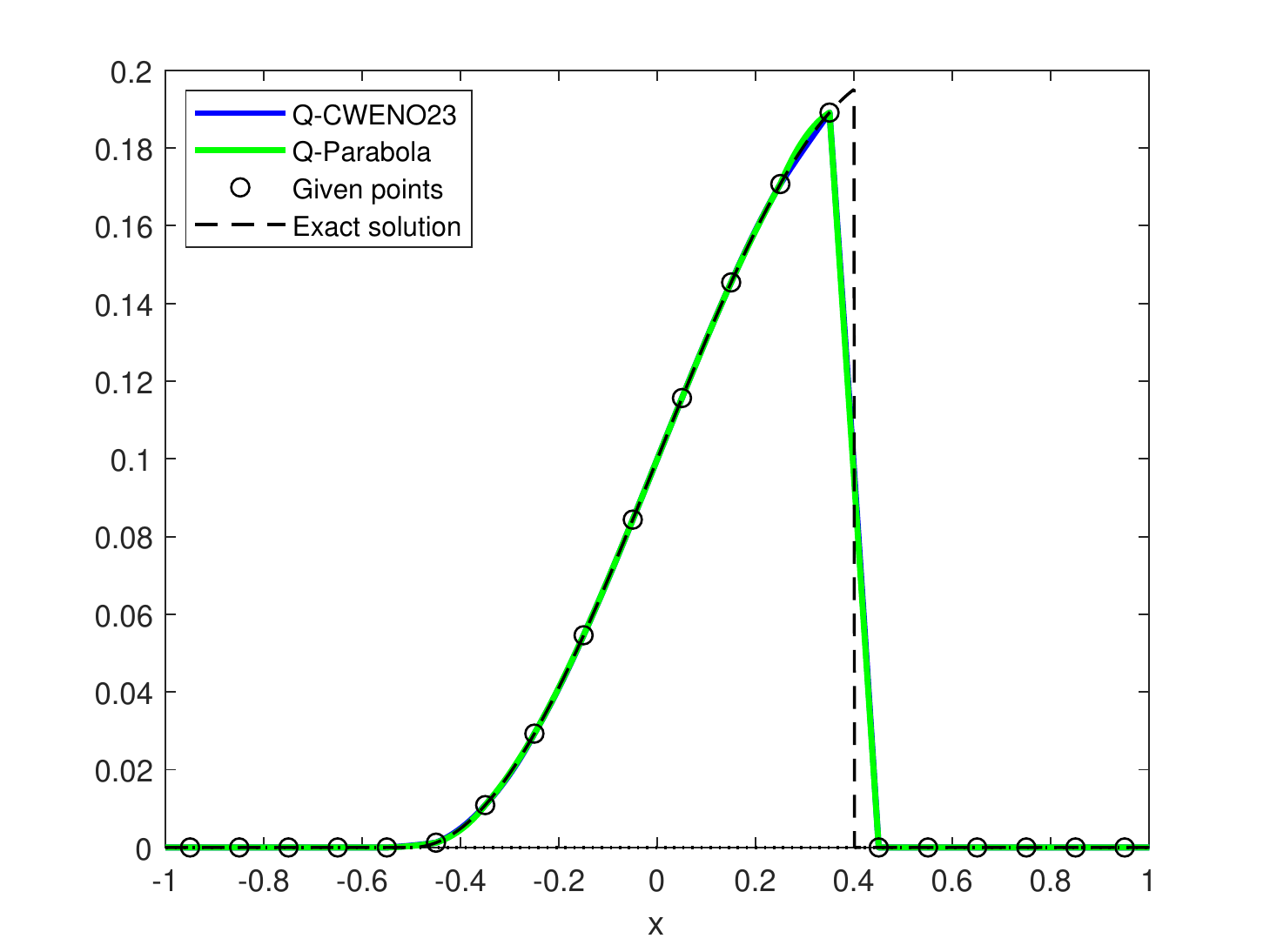}
			\subcaption{Comparison of $\bar{u}_3$ given by \eqref{function for comparison pp} and its reconstructions.}\label{fig: comparison pp 1}
		\end{subfigure}		
		\begin{subfigure}[b]{0.49\linewidth}
			\includegraphics[width=1\linewidth]{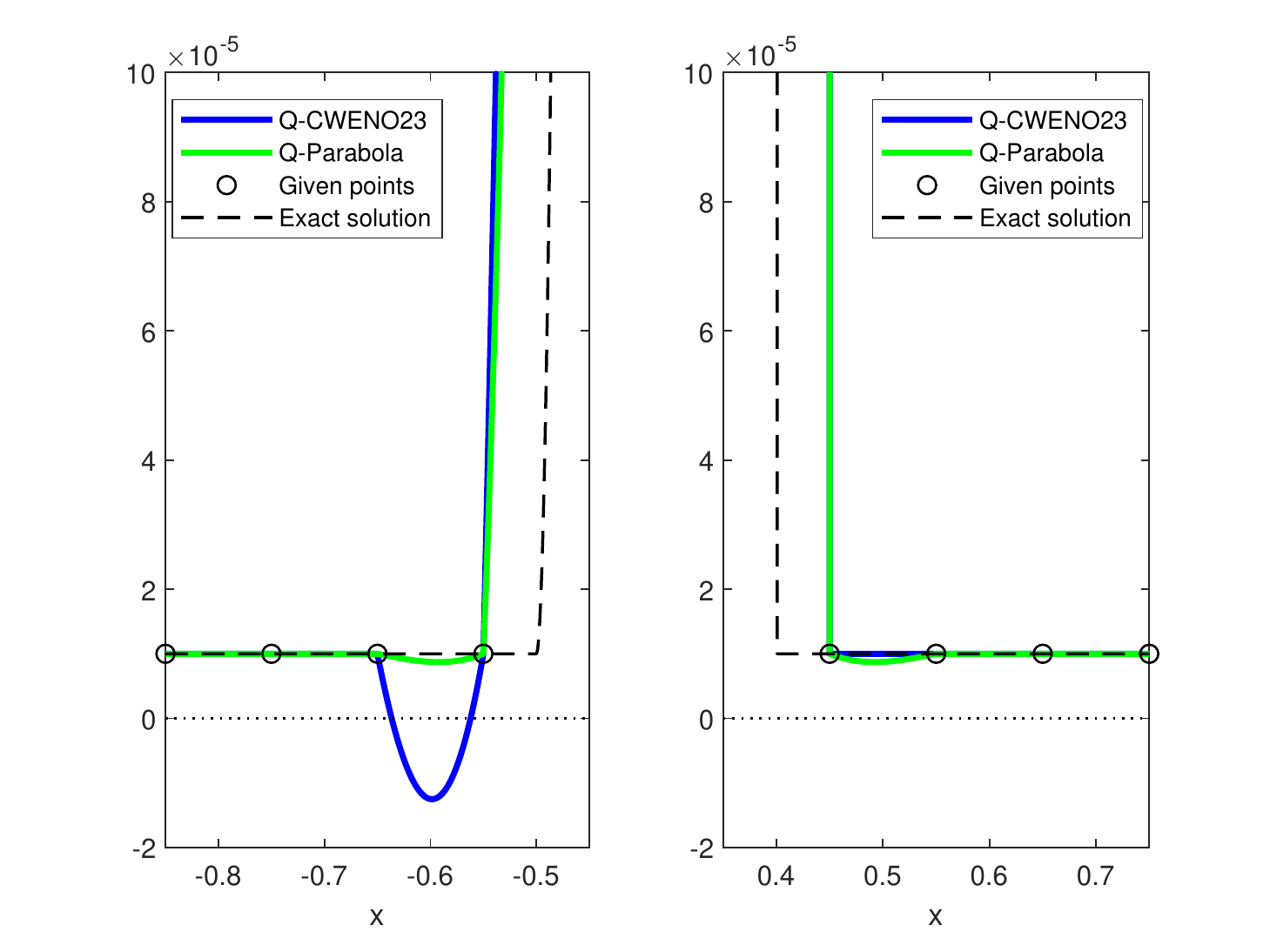}
			\subcaption{Comparison of $\bar{u}_3$ given by \eqref{function for comparison pp} and its reconstructions on $[-0.85,-0.45]$ and $[0.35,0.75]$.
			}\label{fig: comparison pp 1 zoom}
		\end{subfigure}
		\caption{Comparison of reconstructions between Q-Parabola and Q-CWENO23. Dashed lines are exact solutions $\bar{u}_3(x)$ and black circles are given values on grid points of $\bar{u}_3(x)$ given in \eqref{function for comparison pp}.} \label{fig: comparison pp}
	\end{figure}

	Summarizing, our reconstruction works as follows:
	\subsubsection{Algorithm for 1D case}\label{algorithm 1d}
	\begin{enumerate}
		\item Given cell average values $\{\bar{u}_i\}_{i \in \mathcal{I}}$  for each $i \in \mathcal{I}$, reconstruct a polynomial of even degree $k$:$$R_i(x)= \sum_{\ell=0}^k \frac{R_i^{(\ell)}}{\ell !}(x-x_i)^{\ell}$$ which is:
		
		\begin{itemize}
			\item High order accurate in the approximation of smooth $u(x)$:
			\begin{itemize}
				\item If $\ell$ is an even integer such that $0 \leq \ell\leq k$: $\, 					u_i^{(\ell)}=R_i^{(\ell)}+ \mathcal{O}(\Delta x^{k+2-\ell}).$
				% 			\begin{align*}
				% 				\begin{split}
				% 					u_i^{(\ell)}&=R_i^{(\ell)}+ \mathcal{O}(\Delta x^{k+2-\ell}). 
				% 				\end{split}
				% 			\end{align*}
				\item If $\ell$ is an odd integer such that $0\leq \ell < k$: $ 	\,				u_i^{(\ell)}-u_{i+1}^{(\ell)}=R_i^{(\ell)}-R_{i+1}^{(\ell)}+ \mathcal{O}(\Delta x^{k+2-\ell}).$
				% 			\begin{align*}
				% 				\begin{split}
				% 					u_i^{(\ell)}-u_{i+1}^{(\ell)}&=R_i^{(\ell)}-R_{i+1}^{(\ell)}+ \mathcal{O}(\Delta x^{k+2-\ell}).
				% 				\end{split}
				% 			\end{align*}
			\end{itemize}
			%		\[u(x) = R_i(x) + \mathcal{O}\left((\Delta x)^{k+2}\right), \quad x \in I_i. 
			%		\]
			\item Essentially non-oscillatory. 
			\item Positive preserving. 
			\item Conservative in the sense of cell averages: $\displaystyle \frac{1}{\Delta x}\int_{x_{i-\frac{1}{2}}}^{x_{i+\frac{1}{2}}}R_i(x)\,dx = \bar{u}_{i}.$
			% 		\[
			% 		\frac{1}{\Delta x}\int_{x_{i-\frac{1}{2}}}^{x_{i+\frac{1}{2}}}R_i(x)\,dx = \bar{u}_{i}.
			% 		\] 
		\end{itemize}    
		\item Using the obtained values $R_i^{(\ell)}$ for $0 \leq \ell\leq k$, approximate $u(x_{i+\theta})$ with
		\begin{align*}
		Q_{i+\theta}= \sum_{\ell=0}^k (\Delta x)^{\ell} \left(\alpha_{\ell}(\theta)R_{i}^{(\ell)} +\beta_\ell(\theta)R_{i+1}^{(\ell)}\right),
		\end{align*}
		where $\alpha_\ell(\theta)$ and $\beta_\ell(\theta)$ are given in \eqref{form of alpha} and \eqref{form of beta}
		% 	\begin{align*}
		% 		\alpha_\ell(\theta) = \frac{1- (2\theta -1)^{\ell+1}}{2^{\ell+1}(\ell+1)!},\quad \beta_\ell(\theta) = \frac{(2\theta -1)^{\ell+1} - (-1)^{\ell+1} }  {2^{\ell+1}(\ell+1)!},
		% 	\end{align*}
		% 	for $\theta \in [0, 1)$.
	\end{enumerate}
	
	% \begin{remark}
	% 	Note that step 1) requires that $R_i(x)$ is conservative only in cell $i$.
	% \end{remark}
	% \begin{remark}
	% 	Let $k = 2r$, $r \in \mathbb{N}$. It's easy to prove that by choosing $R_i$ that interpolates a smooth function in the sense of cell average $\forall j = i-r, ..., i+r$, then conditions (\ref{consistency condition}) are satisfied.
	% \end{remark}
	% 
	%%%%%%%%%%%%%%%%%%%%%%%%%%%%%%%%%%%%%%%%%%%%%%%%%%%%%%%%%%%%%%%%%%%%%%%%%%%%%%%%%%%%%%%%%%%%%%%%%%%%%%%%%%%%%%%%%%%%%%%%%%%%%%%%%%%%%%%%%%%%%%%%%%%%%%%%%%%%%%%%%%%%%%%%%%%%%%%%%%%%%%%%%%%%%%%%%%%%%%%%%%%%%%%%%%%%%%%%%%%%%%%%%%%%%%%%%%%%%%%%%%%%%%%%%%%%%%%%%%%%%%%%%%%%%%%%%%%%%%%%%%%%%%%%%%%%%%%%%%%%%%%%%%%%%%%%%%%%%%%%%%%%%%%%%%%%%%%%%%%%%%%%%%%%%%%%%%%%%%%%%%%%%%%%%%%%%%%%%%%%%%%%%%%%%%%%%%%%%%%%%%%%%%%%%%%%%%%%%%%%%%%%%%%%%%%%%%%%%%%%%%%%%%%%%%%%%%%%%%%%%%%%%%%%%%%%%%%%%%%%%%%%%%%%%%%%%%%%%%%%%%%%%%%%%%%%%%%%%%%%%%%%%%%%%%%%%%%%%%%%%%%%%%%%%%%%%%%%%%%%%%%%%%%%%%%

	\section{Conservative reconstruction in 2D}\label{sec:Conservative reconstruction in 2D}
	In this section, we introduce the conservative reconstruction technique in two space dimensions, following the one adopted in the previous section. 
	% For brevity, we use the grid point  $x_i,y_j$ instead of $x_i^{(1)},x_i^{(2)}$ in this section. 
	Let $u:\mathbb{R}^2 \rightarrow \mathbb{R}$ be a smooth function and $\bar{u}:\mathbb{R}^2 \rightarrow \mathbb{R}$ be a corresponding sliding average function:
	\[
	\bar{u}(x,y) = \frac{1}{\Delta x\Delta y}\int_{y-\Delta y/2}^{y+\Delta y/2}\int_{x-\Delta x/2}^{x+\Delta x/2}u(x,y)\,dx\,dy.
	\] 
	Given cell averages on grid points,
	\[
	\frac{1}{\Delta x\Delta y}\int_{I_{i,j}}u(x)\,dx = \bar{u}_{i,j}, \quad I_{i,j}=[x_{i-\frac{1}{2}},x_{i+\frac{1}{2}}] \times [y_{j-\frac{1}{2}},y_{j+\frac{1}{2}}],
	\] 
	for each $(i,j) \in \mathcal{I}$, our goal is to approximate the function $\bar{u}(x,y)$. Assume we have a piecewise polynomial reconstruction $R(x,y)=\sum_{i,j} R_{i,j}(x,y) \chi_{i,j}(x,y) $, for $(i,j) \in \mathcal{I}$, where $\chi_{i,j}(x,y)$ is the characteristic function of cell $I_{i,j}$ and each $R_{i,j}(x,y)$ denotes a polynomial of degree $k$ and has the following properties:
	\begin{enumerate}
		\item It is high order accurate in the approximation of $u(x,y)$:
		\begin{align}\label{R accuracy 2d}
		u(x,y) = R_{i,j}(x,y) + \mathcal{O}\left(h^{k+1}\right), \quad (x,y) \in I_{i,j},
		\end{align}
		where $\Delta x, \Delta y = \mathcal{O}(h)$.
		%		\item Essentially non-oscillatory 
		
		\item It is conservative in the sense of cell averages:
		\[
		\frac{1}{\Delta x\Delta y}\int_{y_{j-\frac{1}{2}}}^{y_{j+\frac{1}{2}}}\int_{x_{i-\frac{1}{2}}}^{x_{i+\frac{1}{2}}}R_{i,j}(x,y)\,dx\,dy  = \bar{u}_{i,j}.
		\] 
	\end{enumerate}
	We start from a polynomial of degree $k$, $R_{i,j}(x,y)$:
	\begin{align}\label{R 2d taylor}
	R_{i,j}(x,y)= \sum_{|\ell|=0}^k \frac{R_{i,j}^{(\ell)}}{\ell_1 ! \ell_2 !}(x-x_i)^{\ell_1}(y-y_j)^{\ell_2},
	\end{align}
	where we use a multi index $\ell=(\ell_1,\ell_2)$.
	\begin{figure}[t]
		\centering
		%\begin{subfigure}[b]{0.6\linewidth}
		\includegraphics[width=0.55\linewidth]{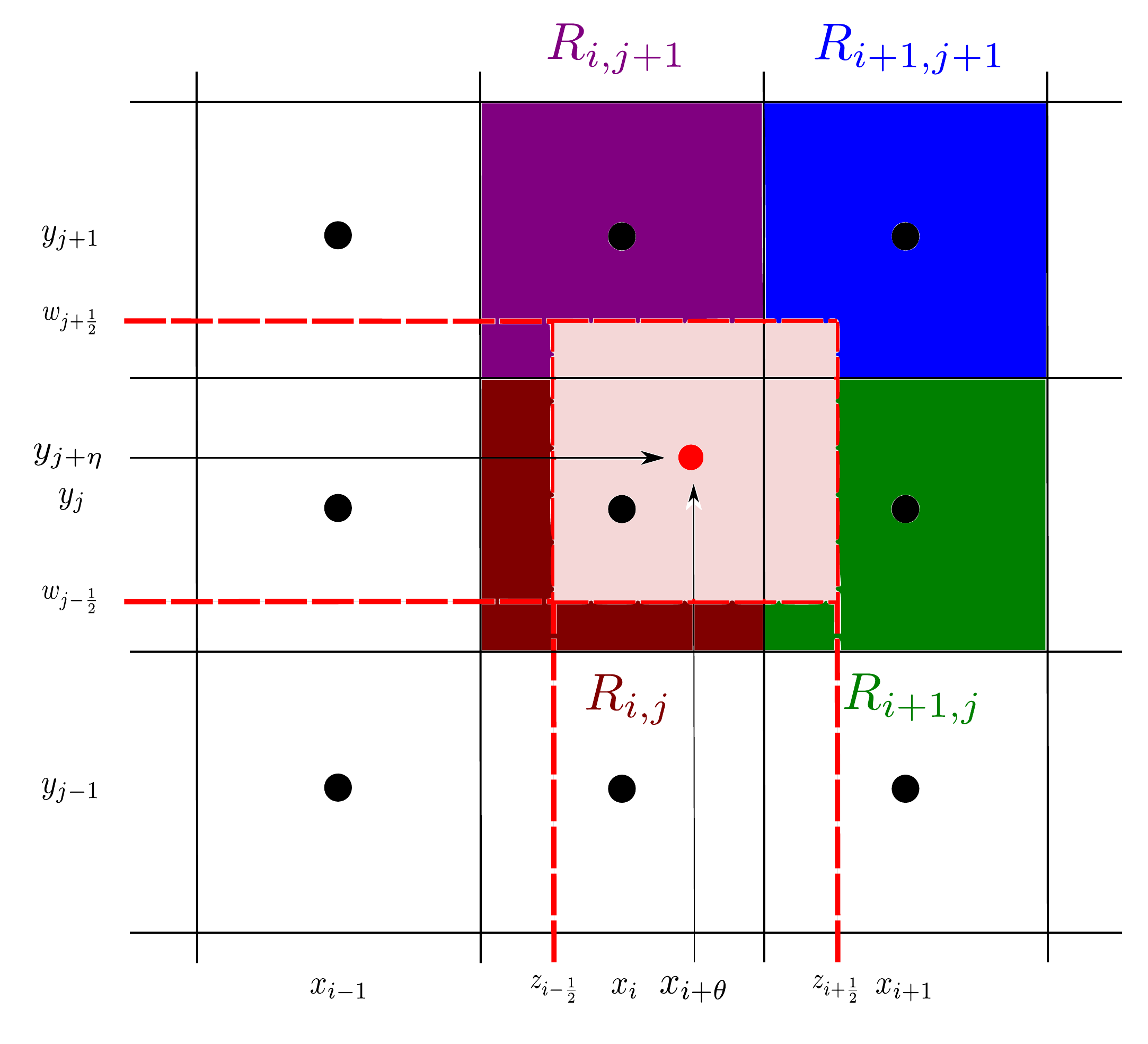}
		%\end{subfigure}	
		\caption{Description of two-dimensional conservative reconstruction}\label{2d interpolation}  	
	\end{figure}
	Consider a cell $I_{i,j}^{\theta,\eta}$ whose center is $(x_{i+\theta},y_{j+\eta})$ for some $\theta,\eta \in [0,1)$. In Fig. \ref{2d interpolation}, we note that $(x_{i+\theta},y_{j+\eta})$ lies inside one of $I_{i,j},I_{i+1,j},I_{i,j+1},I_{i+1,j+1}$. Let us denote a cell $I_{i+\theta,j+\eta}:=[z_{i-\frac{1}{2}},z_{i+\frac{1}{2}}] \times [w_{j-\frac{1}{2}},w_{j+\frac{1}{2}}]$ and a point $(x_{i+\theta},y_{j+\eta}):= (x_i+\theta\Delta x,y_j+\eta\Delta y)$. Now, we approximate $\bar{u}(x_{i+\theta},y_{j+\eta})$ by
	\begin{align*}
	\bar{u}(x_{i+\theta},y_{j+\eta}) &\approx \frac{1}{\Delta x\Delta y}\int_{w_{j-\frac{1}{2}}}^{w_{j+\frac{1}{2}}}\int_{z_{i-\frac{1}{2}}}^{z_{i+\frac{1}{2}}}R(x,y) \,dx\,dy\cr
	&=\frac{1}{\Delta x\Delta y}\int_{w_{j-\frac{1}{2}}}^{y_{j+\frac{1}{2}}}\int_{z_{i-\frac{1}{2}}}^{x_{i+\frac{1}{2}}}R_{i,j}(x,y) \,dx\,dy+\frac{1}{\Delta x\Delta y}\int_{w_{j-\frac{1}{2}}}^{y_{j+\frac{1}{2}}}\int_{x_{i+\frac{1}{2}}}^{z_{i+\frac{1}{2}}}R_{i+1,j}(x,y) \,dx\,dy\cr
	&+\frac{1}{\Delta x\Delta y}\int_{y_{j+\frac{1}{2}}}^{w_{j+\frac{1}{2}}}\int_{z_{i-\frac{1}{2}}}^{x_{i+\frac{1}{2}}}R_{i,j+1}(x,y) \,dx\,dy+\frac{1}{\Delta x\Delta y}\int_{y_{j+\frac{1}{2}}}^{w_{j+\frac{1}{2}}}\int_{x_{i+\frac{1}{2}}}^{z_{i+\frac{1}{2}}}R_{i+1,j+1}(x,y) \,dx\,dy.
	\end{align*}  
	% where $R_{i,j}(x,y):= R(x,y) \mathcal{X}_{i,j}$. 
	The first integral becomes 
	\begin{align*}
	\frac{1}{\Delta x\Delta y}\int_{w_{j-\frac{1}{2}}}^{y_{j+\frac{1}{2}}}\int_{z_{i-\frac{1}{2}}}^{x_{i+\frac{1}{2}}}R_{i,j}(x,y) \,dx\,dy
	&=\frac{1}{\Delta x\Delta y}\sum_{|\ell|=0}^k R_{i,j}^{(\ell)}\int_{y_{j-\frac{1}{2}+\eta}}^{y_{j+\frac{1}{2}}}\int_{x_{i-\frac{1}{2}+\theta}}^{x_{i+\frac{1}{2}}} \frac{1}{\ell_1 ! \ell_2 !}(x-x_i)^{\ell_1}(y-y_j)^{\ell_2}
	\,dx\,dy\cr
	&=\sum_{|\ell|=0}^k R_{i,j}^{(\ell)} \left(\frac{1}{\Delta x}\int_{x_{i-\frac{1}{2}+\theta}}^{x_{i+\frac{1}{2}}} \frac{(x-x_i)^{\ell_1}}{\ell_1 !}\,dx\right)\left(\frac{1}{\Delta y}\int_{y_{j-\frac{1}{2}+\eta}}^{y_{j+\frac{1}{2}}}\frac{(y-y_j)^{\ell_2}}{\ell_2 !}\,dy\right)\cr
	&=\sum_{|\ell|=0}^k (\Delta)^{\ell} \alpha_{\ell_1}(\theta)\alpha_{\ell_2}(\eta)R_{i,j}^{(\ell)}
	%=:\sum_{|\ell|=0}^k (\Delta)^{\ell} \alpha_{\ell}(\theta,\eta)R_{i,j}^{(\ell)}.
	\end{align*}  
	where $(\Delta)^{\ell}=(\Delta x)^{\ell_1}(\Delta y)^{\ell_2}$. Similarly, we obtain 
	\begin{align*}
	\frac{1}{\Delta x\Delta y}\int_{w_{j-\frac{1}{2}}}^{y_{j+\frac{1}{2}}}\int_{x_{i+\frac{1}{2}}}^{z_{i+\frac{1}{2}}}R_{i+1,j}(x,y) \,dx\,dy
	&=\sum_{|\ell|=0}^k (\Delta)^{\ell} \beta_{\ell_1}(\theta)\alpha_{\ell_2}(\eta)R_{i+1,j}^{(\ell)},\cr
	\frac{1}{\Delta x\Delta y}\int_{y_{j+\frac{1}{2}}}^{w_{j+\frac{1}{2}}}\int_{z_{i-\frac{1}{2}}}^{x_{i+\frac{1}{2}}}R_{i,j+1}(x,y) \,dx\,dy
	&=\sum_{|\ell|=0}^k (\Delta)^{\ell} \alpha_{\ell_1}(\theta)\beta_{\ell_2}(\eta)R_{i,j+1}^{(\ell)},\cr
	\frac{1}{\Delta x\Delta y}\int_{y_{j+\frac{1}{2}}}^{w_{j+\frac{1}{2}}}\int_{x_{i+\frac{1}{2}}}^{z_{i+\frac{1}{2}}}R_{i+1,j+1}(x,y) \,dx\,dy
	&=\sum_{|\ell|=0}^k (\Delta)^{\ell} \beta_{\ell_1}(\theta)\beta_{\ell_2}(\eta)R_{i+1,j+1}^{(\ell)}.
	%=:\sum_{|\ell|=0}^k (\Delta)^{\ell} \beta_{\ell}(\theta,\eta)R_{i+1,j}^{(\ell)}.
	%&=\frac{1}{\Delta x\Delta y}\int_{y_{j-\frac{1}{2}+\eta}}^{y_{j+\frac{1}{2}}}\int_{x_{i+\frac{1}{2}}}^{x_{i+\frac{1}{2}+\theta}}R_{i+1,j} + R'_{i+1,j}(x-x_{i+1}) + R_{i+1,j}^{\backprime}(y-y_j) dxdy\cr
	%&\quad+\frac{1}{\Delta x\Delta y}\int_{y_{j-\frac{1}{2}+\eta}}^{y_{j+\frac{1}{2}}}\int_{x_{i+\frac{1}{2}}}^{x_{i+\frac{1}{2}+\theta}}\frac{1}{2}\bigg(R''_{i+1,j}(x-x_{i+1})^2 + 2R'^{\backprime}_{i+1,j}(x-x_{i+1})(y-y_j) + R_{i+1,j}^{\backprime\backprime}(y-y_j)^2\bigg) dxdy\cr
	%&=\theta(1-\eta) R_{i+1,j} -  \frac{\theta(1-\theta)(1-\eta)}{2}R'_{i+1,j}h  +  \frac{\eta(1-\eta)\theta}{2}R_{i+1,j}^{\backprime}h  \cr
	%&\quad+ \frac{1}{24}q(\theta)(1-\eta)R''_{i+1,j}h^2 - \frac{\theta\eta(1-\theta)(1-\eta)}{4}R'^{\backprime}_{i+1,j}h^2 + \frac{1}{24}(1-q(\eta))\theta R_{i+1,j}^{\backprime\backprime}h^2 ,  
	\end{align*}
	Denoting the approximation of $\bar{u}(x_{i+\theta},y_{j+\theta})$ by $Q_{i+\theta,j+\eta}$, we write it as
	\begin{align}\label{scheme 2d}
	\begin{split}
	Q_{i+\theta,j+\eta}=\sum_{|\ell|=0}^k (\Delta)^{\ell}&\bigg(\alpha_{\ell_1}(\theta)\alpha_{\ell_2}(\eta)R_{i,j}^{(\ell)}+ 
	\beta_{\ell_1}(\theta)\alpha_{\ell_2}(\eta)R_{i+1,j}^{(\ell)}\cr
	&+ 
	\alpha_{\ell_1}(\theta)\beta_{\ell_2}(\eta)R_{i,j+1}^{(\ell)}+
	\beta_{\ell_1}(\theta)\beta_{\ell_2}(\eta)R_{i+1,j+1}^{(\ell)}\bigg),
	\end{split}
	\end{align}
	where the explicit forms of $\alpha_{\ell_1}(\theta)$, $\alpha_{\ell_2}(\eta)$, $\beta_{\ell_1}(\theta)$, $\beta_{\ell_2}(\eta)$ are given in \eqref{form of alpha} and \eqref{form of beta}.

	\subsection{General Properties}
	In the following proposition, as in Proposition \ref{prop con 1d}, we show that the approximation $Q_{i+\theta, j+\eta}$ is of order $(k+2)$ of accuracy for an even integer $k \geq 0$. For simplicity, we assume $\Delta x, \Delta y = h >0.$
	\begin{proposition}\label{Consistency 2d}
		Let $k\geq 0$ be an even integer and $u$ be smooth enough so that a piecewise polynomial $R(x,y)=\sum_{i,j} R_{i,j}(x,y) \chi_{i,j} $ satifies	
		\begin{align}\label{consistency condition 2d}
		\begin{split}
		u_{i,j}^{(\ell)}&=R_{i,j}^{(\ell)}+ \mathcal{O}(h^{k+2-|\ell|}), \quad \text{$\ell \in A$}\cr
		u_{i,j}^{(\ell)}-u_{i+1,j}^{(\ell)}&=R_{i,j}^{(\ell)}-R_{i+1,j}^{(\ell)}+ \mathcal{O}(h^{k+2-|\ell|}), \quad \text{$\ell \in B$}\cr
		u_{i,j}^{(\ell)}-u_{i,j+1}^{(\ell)}&=R_{i,j}^{(\ell)}-R_{i,j+1}^{(\ell)}+ \mathcal{O}(h^{k+2-|\ell|}), \quad \text{$\ell \in C$}
		\end{split}
		\end{align}
		where the set $A,B$ and $C$ are defined 
		\begin{align}\label{decomposition1}
		\begin{split}
		A&=\{\ell : |\ell|=\text{even},\quad  0 \leq |\ell|\leq k\},\cr
		B&=\{\ell :  \ell_1=\text{odd},\quad \ell_2=\text{even} ,\quad  0 \leq |\ell|\leq k\},\cr 
		C&=\{\ell :  \ell_1=\text{even},\quad \ell_2=\text{odd} ,\quad  0 \leq |\ell|\leq k\}.
		\end{split}
		\end{align}
		%	\begin{align}\label{consistency condition 2d}
		%	\begin{split}
		%	u_{i,j}&=R_{i,j}(x_i,y_j)+ \mathcal{O}(h^4)\cr
		%	u_{i,j}'-u_{i+1,j}'&=R_{i,j}'(x_i,v_j)-R_{i+1,j}'(x_{i+1},y_j)+ \mathcal{O}(h^3)\cr
		%	u_{i,j}'-u_{i,j+1}'&=R_{i,j}^\backprime(x_i,v_j)-R_{i,j+1}^\backprime(x_i,y_{j+1})+ \mathcal{O}(h^3)\cr
		%	u_{i,j}''&=R_{i,j}''(x_i,y_j)+ \mathcal{O}(h^2)\cr
		%	u_{i,j}'^\backprime&=R_{i,j}'^\backprime(x_i,y_j)+ \mathcal{O}(h^2)\cr
		%	u_{i,j}^{\backprime\backprime}&=R_{i,j}^{\backprime\backprime}(x_i,y_j)+ \mathcal{O}(h^2).
		%	\end{split}
		%	\end{align}	
		Then the reconstruction $Q_{i+\theta,j+\eta}$ gives a $(k+2)$-th-order approximation of sliding averages $\bar{u}_{i+\theta,j+\eta}$ for any $\theta,\eta \in [0,1).$
	\end{proposition}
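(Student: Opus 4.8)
The plan is to follow the one-dimensional argument behind Proposition~\ref{Consistency}, rewritten with the multi-index notation of \eqref{R 2d taylor}, exploiting that both the two-dimensional sliding average and the reconstruction \eqref{scheme 2d} have a tensor-product structure in the two variables. First I would record the $2$D analogue of \eqref{1d ubar expansion}: since $\bar u$ is the composition of the two one-dimensional sliding averages,
\[
\bar u(x,y)=\sum_{m_1,m_2\ \mathrm{even}}(\Delta x)^{m_1}(\Delta y)^{m_2}\,c_{m_1}c_{m_2}\,\partial_x^{m_1}\partial_y^{m_2}u(x,y),\qquad c_m:=\tfrac{1}{(m+1)!}\Bigl(\tfrac12\Bigr)^{m}.
\]
Evaluating at $(x_{i+\theta},y_{j+\eta})$ with $\Delta x,\Delta y=h$, and noting that $m_1,m_2$ even forces $|m|$ even while $k$ is even, Taylor expansion on the shifted cell shows the tail $|m|\ge k+2$ is $\mathcal O(h^{k+2})$; hence it suffices to reproduce the finite sum over even $m$ with $|m|\le k$.

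I would then compare $Q_{i+\theta,j+\eta}$ with this finite sum in two steps, exactly as in 1D. In \textbf{Step 1} I replace, in \eqref{scheme 2d}, each corner coefficient $R_{i',j'}^{(\ell)}$ by $u_{i',j'}^{(\ell)}$ with total error $\mathcal O(h^{k+2})$. For $\ell\in A$ this is immediate from the first line of \eqref{consistency condition 2d}, which bounds each $R_{i',j'}^{(\ell)}-u_{i',j'}^{(\ell)}$ by $\mathcal O(h^{k+2-|\ell|})$, multiplied by $(\Delta)^{\ell}=\mathcal O(h^{|\ell|})$. For $\ell\in B$ (so $\ell_1$ odd) I use $\alpha_{\ell_1}(\theta)+\beta_{\ell_1}(\theta)=0$ from \eqref{ab odd} to regroup the four corners into $x$-differences, and bound $R_{i,j}^{(\ell)}-R_{i+1,j}^{(\ell)}-(u_{i,j}^{(\ell)}-u_{i+1,j}^{(\ell)})$ (and its analogue at row $j+1$) by $\mathcal O(h^{k+2-|\ell|})$ via the second line of \eqref{consistency condition 2d}; symmetrically for $\ell\in C$ using $\alpha_{\ell_2}(\eta)+\beta_{\ell_2}(\eta)=0$, $y$-differences, and the third line. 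These three cases exhaust $\{|\ell|\le k\}$, since the indices with both $\ell_1$ and $\ell_2$ odd have even total degree and hence lie in $A$.

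In \textbf{Step 2} the problem reduces to a pure Taylor identity for $u$. I would expand each $u_{i',j'}^{(\ell)}$ in the resulting expression about $(x_{i+\theta},y_{j+\eta})$ to total order $k+1-|\ell|$ (remainder $\mathcal O(h^{k+2-|\ell|})$, harmless after multiplication by $(\Delta)^{\ell}$) and collect the coefficient of $(\Delta x)^{m_1}(\Delta y)^{m_2}\,\partial_x^{m_1}\partial_y^{m_2}u(x_{i+\theta},y_{j+\eta})$. Because the node shifts factor as $(-\theta,1-\theta)$ in $x$ and $(-\eta,1-\eta)$ in $y$, this coefficient factors, for $|m|\le k$, as $\gamma_{m_1}(\theta)\,\gamma_{m_2}(\eta)$, with $\gamma_m(\theta):=\sum_{\ell=0}^{m}\tfrac{1}{(m-\ell)!}\bigl[\alpha_\ell(\theta)(-\theta)^{m-\ell}+\beta_\ell(\theta)(1-\theta)^{m-\ell}\bigr]$ the one-dimensional coefficient from the proof of Proposition~\ref{Consistency}, for which $\gamma_m=c_m$ if $m$ is even and $\gamma_m=0$ if $m$ is odd; this reproduces exactly the truncated expansion of $\bar u$ above. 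For $|m|=k+1$ the only $\ell\le m$ excluded by the constraint $|\ell|\le k$ is $\ell=m$, whose contribution $(\alpha_{m_1}(\theta)+\beta_{m_1}(\theta))(\alpha_{m_2}(\eta)+\beta_{m_2}(\eta))$ vanishes since $|m|$ is odd, so the $|m|=k+1$ coefficients still equal $\gamma_{m_1}(\theta)\gamma_{m_2}(\eta)=0$ and contribute nothing. Combining the two steps gives $Q_{i+\theta,j+\eta}=\bar u(x_{i+\theta},y_{j+\eta})+\mathcal O(h^{k+2})$.

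I expect the only genuinely delicate point to be the interaction between the total-degree truncation $|\ell|\le k$ and the tensor factorisation at order $k+1$: one must check that the order-$(k+1)$ contributions still cancel, which is exactly where $k$ even is used, through \eqref{ab odd}--\eqref{ab even}. An alternative organisation is to apply the $1$D analysis twice, in $x$ and then in $y$: the inner $x$-reconstructions are the families $S_{i,j}^{(\ell_2)}(\theta):=\sum_{\ell_1}(\Delta x)^{\ell_1}\bigl(\alpha_{\ell_1}(\theta)R_{i,j}^{(\ell_1,\ell_2)}+\beta_{\ell_1}(\theta)R_{i+1,j}^{(\ell_1,\ell_2)}\bigr)$, and \eqref{consistency condition 2d} is precisely what is needed to feed reconstructed values (for $\ell_2$ even) and reconstructed differences (for $\ell_2$ odd) into the outer $y$-reconstruction; here some care is needed because the inner reconstruction has degree $k-\ell_2$, which is odd when $\ell_2$ is odd, so a mild variant of Proposition~\ref{Consistency} rather than its exact statement is invoked at the intermediate stage. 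For that reason I would favour the direct two-step computation above.
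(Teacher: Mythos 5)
Your argument is correct and takes essentially the same route as the paper's proof in Appendix \ref{Appendix accuracy proof 2d}: replace $R^{(\ell)}$ by $u^{(\ell)}$ via \eqref{consistency condition 2d} (with the parity regrouping through \eqref{ab odd} for the sets $B$ and $C$), Taylor-expand, and kill the order-$(k+1)$ contributions using $\alpha_{\ell}+\beta_{\ell}=0$ for odd $\ell$, which is exactly where $k$ even enters. The only differences are bookkeeping — you center the expansion at $(x_{i+\theta},y_{j+\eta})$ and factor the coefficients as $\gamma_{m_1}(\theta)\gamma_{m_2}(\eta)$, whereas the paper expands about $(x_i,y_j)$ and completes the order-$(k+1)$ sum with the add-zero identity \eqref{add 0 2d}; you are in fact more explicit than the paper about why the difference conditions for $B$ and $C$ suffice to justify the replacement step.
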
	
	\begin{proof}
		For detailed proof, see \ref{Appendix accuracy proof 2d}.
	\end{proof}

	The conservation property also holds in the 2D reconstruction \eqref{scheme 2d}:
	\begin{proposition}\label{prop con 2d}
		% 	For periodic boundary condition, we have
		% 	\[
		% 	\sum_{i,j} Q_{i+\theta,j+\theta} = \sum_{i,j} \bar{u}_{i,j}, 
		% 	\] 
		% 	where $\theta,\eta \in [0,1)$.
		% 	
		% 	
		Assume that $R_{i,j}(x,y)$ satisfies
		\begin{align*}
		\frac{1}{\Delta x\Delta y}\int_{y_{j-\frac{1}{2}}}^{y_{j+\frac{1}{2}}}\int_{x_{i-\frac{1}{2}}}^{x_{i+\frac{1}{2}}}R_{i,j}(x,y)\,dx\,dy  = \bar{u}_{i,j},\quad (i,j) \in \mathcal{I}.
		\end{align*}
		Then, for periodic functions $\bar{u}(x,y)$ with period $(L,L)=(N h,N h),\, N \in \mathbb{N}$ 
		\[
		\sum_{1 \leq i,j \leq N} Q_{i+\theta,j+\theta} = \sum_{1 \leq i,j \leq N} \bar{u}_{i,j}, 
		\]
		for any $\theta,\eta \in [0,1)$.
	\end{proposition}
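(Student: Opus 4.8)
The plan is to follow the proof of Proposition~\ref{prop con 1d}, working with the integral form of the reconstruction rather than with the expanded expression \eqref{scheme 2d}. By construction, $Q_{i+\theta,j+\eta}$ is the average of $R$ over the cell of size $\Delta x\times\Delta y$ centred at $(x_{i+\theta},y_{j+\eta})$, that is over $[x_{i-\frac12}+\theta\Delta x,\,x_{i+\frac12}+\theta\Delta x]\times[y_{j-\frac12}+\eta\Delta y,\,y_{j+\frac12}+\eta\Delta y]$, and the derivation preceding \eqref{scheme 2d} already writes this average as a sum of four double integrals, of $R_{i,j}$, $R_{i+1,j}$, $R_{i,j+1}$ and $R_{i+1,j+1}$ over the four sub-rectangles into which the shifted cell is cut by the grid lines $x=x_{i+\frac12}$ and $y=y_{j+\frac12}$.

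First I would sum this identity over $1\le i,j\le N$ and exploit the $(L,L)$-periodicity of $\bar u$, hence of $R$. Since $\theta,\eta$ do not depend on $i,j$, the substitutions $i+1\mapsto i$ in the $R_{i+1,j}$ term, $j+1\mapsto j$ in the $R_{i,j+1}$ term, and both together in the $R_{i+1,j+1}$ term leave the outer double sum unchanged and turn all four contributions into integrals of the \emph{same} polynomial $R_{i,j}$ over, respectively, $[x_{i-\frac12}+\theta\Delta x,x_{i+\frac12}]\times[y_{j-\frac12}+\eta\Delta y,y_{j+\frac12}]$, $[x_{i-\frac12},x_{i-\frac12}+\theta\Delta x]\times[y_{j-\frac12}+\eta\Delta y,y_{j+\frac12}]$, $[x_{i-\frac12}+\theta\Delta x,x_{i+\frac12}]\times[y_{j-\frac12},y_{j-\frac12}+\eta\Delta y]$ and $[x_{i-\frac12},x_{i-\frac12}+\theta\Delta x]\times[y_{j-\frac12},y_{j-\frac12}+\eta\Delta y]$. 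By Fubini and additivity of the integral in each variable separately, these four pieces reassemble into $\int_{I_{i,j}}R_{i,j}(x,y)\,dx\,dy$, so that $\sum_{i,j}Q_{i+\theta,j+\eta}=\sum_{i,j}\frac{1}{\Delta x\Delta y}\int_{I_{i,j}}R_{i,j}=\sum_{i,j}\bar u_{i,j}$ by the cell-average hypothesis on $R_{i,j}$.

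An equivalent, purely algebraic route starts from \eqref{scheme 2d}: summing over $i,j$ and using periodicity to identify $\sum_i R_{i+1,j}^{(\ell)}$ with $\sum_i R_{i,j}^{(\ell)}$ (and similarly in $j$) gives $\sum_{i,j}Q_{i+\theta,j+\eta}=\sum_{i,j}\sum_{|\ell|=0}^k(\Delta)^{\ell}\big(\alpha_{\ell_1}(\theta)+\beta_{\ell_1}(\theta)\big)\big(\alpha_{\ell_2}(\eta)+\beta_{\ell_2}(\eta)\big)R_{i,j}^{(\ell)}$; by \eqref{ab even}--\eqref{ab odd} only the multi-indices with $\ell_1,\ell_2$ both even survive, and for those the product of the two factors is exactly the tensor-product coefficient which, inserted in the two-dimensional analogue of \eqref{1d ubar expansion} applied to $R_{i,j}$, recovers the exact cell average of $R_{i,j}$ over $I_{i,j}$, i.e. $\bar u_{i,j}$.

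I expect no real obstacle here: the argument is the same bookkeeping as in the one-dimensional case, the only point worth stating carefully being that the tensor-product structure of both $R_{i,j}$ and of the integration domain lets the reassembly in $x$ and in $y$ be performed independently. In particular, in contrast with Proposition~\ref{Consistency 2d}, neither smoothness of $u$ nor any step-size restriction enters. I would present the integral version as the proof and relegate the algebraic computation to a remark.
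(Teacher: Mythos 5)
Your proof is correct and is essentially the argument the paper intends: the paper's proof of Proposition \ref{prop con 2d} simply states that it is "similar to the one dimensional case," and your integral version is exactly the two-dimensional analogue of the proof of Proposition \ref{prop con 1d} (split into the four sub-rectangles, reindex by periodicity, reassemble into the cell integral, and apply the cell-average hypothesis). The alternative algebraic route via \eqref{scheme 2d} and \eqref{ab even}--\eqref{ab odd} is also sound, but it is additional material rather than a different approach.
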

	\begin{proof}
		The proof is similar to the one dimensional case. 
	\end{proof}
	
	\subsubsection{Algorithm for 2D case}\label{algorithm 2d}
	\begin{enumerate}
		\item Given cell average values $\{\bar{u}_{i,j}\}_{(i,j) \in \mathcal{I}}$ for each $(i,j) \in \mathcal{I}$, reconstruct a polynomial of degree $k$: $$R_{i,j}(x,y)= \sum_{|\ell|=0}^k \frac{R_{i,j}^{(\ell)}}{\ell_1 ! \ell_2 !}(x-x_i)^{\ell_1}(y-y_j)^{\ell_2}$$  which is:
		\begin{itemize}
			\item High order accurate in the approximation of $u(x)$:
			\begin{align*}	
			\begin{split}
			u_{i,j}^{(\ell)}&=R_{i,j}^{(\ell)}+ \mathcal{O}(h^{k+2-|\ell|}), \quad \text{$\ell \in A$}\cr
			u_{i,j}^{(\ell)}-u_{i+1,j}^{(\ell)}&=R_{i,j}^{(\ell)}-R_{i+1,j}^{(\ell)}+ \mathcal{O}(h^{k+2-|\ell|}), \quad \text{$\ell \in B$}\cr
			u_{i,j}^{(\ell)}-u_{i,j+1}^{(\ell)}&=R_{i,j}^{(\ell)}-R_{i,j+1}^{(\ell)}+ \mathcal{O}(h^{k+2-|\ell|}), \quad \text{$\ell \in C$}
			\end{split}
			\end{align*}
			where sets $A,B,C$ are defined in \eqref{decomposition1}.
			%		\[u(x) = R_i(x) + \mathcal{O}\left((\Delta x)^{k+2}\right), \quad x \in I_i. 
			%		\]
			\item Essentially non-oscillatory.
			\item Positive preserving. 		 
			\item Conservative in the sense of cell averages: $ 	\displaystyle	\frac{1}{\Delta x \Delta y}\int_{y_{j-\frac{1}{2}}}^{y_{j+\frac{1}{2}}}\int_{x_{i-\frac{1}{2}}}^{x_{i+\frac{1}{2}}}R_{i,j}(x,y)\,dx\,dy = \bar{u}_{i,j}.$
		\end{itemize}    
		\item Using the obtained values $R_{i,j}^{(\ell)}$ for $0 \leq |\ell|\leq k$, approximate $\bar{u}(x_{i+\theta},y_{j+\eta})$ with
		\begin{align*}
		Q_{i+\theta,j+\eta}=\sum_{|\ell|=0}^k (\Delta)^{\ell}&\bigg(\alpha_{\ell_1}(\theta)\alpha_{\ell_2}(\eta)R_{i,j}^{(\ell)}+ 
		\beta_{\ell_1}(\theta)\alpha_{\ell_2}(\eta)R_{i+1,j}^{(\ell)}\cr
		&+ 
		\alpha_{\ell_1}(\theta)\beta_{\ell_2}(\eta)R_{i,j+1}^{(\ell)}+
		\beta_{\ell_1}(\theta)\beta_{\ell_2}(\eta)R_{i+1,j+1}^{(\ell)}\bigg),
		\end{align*}
		where $\alpha_{\ell_1}(\theta)$, $\alpha_{\ell_2}(\eta)$, $\beta_{\ell_1}(\theta)$, $\beta_{\ell_2}(\eta)$ are computable using \eqref{form of alpha} and \eqref{form of beta}.
		% 	\begin{align*}
		% 		\alpha_\ell(t) = \frac{1- (2t -1)^{\ell+1}}{2^{\ell+1}(\ell+1)!},\quad \beta_\ell(t) = \frac{(2t -1)^{\ell+1} - (-1)^{\ell+1} }  {2^{\ell+1}(\ell+1)!},
		% 	\end{align*}
		% 	for $t \in [0, 1)$.
	\end{enumerate}

	%%%%%%%%%%%%%%%%%%%%%%%%%%%%%%%%%%%%%%%%%%%%%%%%%%%%%%%%%%%%%%%%%%%%%%%%%%%%%%%%%%%%%%%%%%%%%%%%%%%%%%%%%%%%%%%%%%%%%%%%%%%%%%%%%%%%%%%%%%%%%%%%%%%%%%%%%%%%%%%%%%%%%%%%%%%%%%%%%%%%%%%%%%%%%%%%%%%%%%%%%%%%%%%%%%%%%%%%%%%%%%%%%%%%%%%%%%%%%%%%%%%%%%%%%%%%%%%%%%%%%%%%%%%%%%%%%%%%%%%%%%%%%%%%%%%%%%%%%%%%%%%%%%%%%%%%%%%%%%%%%%%%%%%%%%%%%%%%%%%%%%%%%%%%%%%%%%%%%%%%%%%%%%%%%%%%%%%%%%%%%%%%%%%%%%%%%%%%%%%%%%%%%%%%%%%%%%%%%%%%%%%%%%%%%%%%%%%%%%%%%%%%%%%%%%%%%%%%%%%%%%%%%%%%%%%%%%%%%%%%%%%%%%%%%%%%%%%%%%%%%%%%%%%%%%%%%%%%%%%%%%%%%%%%%%%%%%%%%%%%%%%%%%%%%%%%%%%%%%%%%%%%%%%%%%%%%%%%%%%%%%%%%%%%%%%%%%%%%%%%%%%%%%%%%%%%%%%%%%%%%%%%%%%%%%%%%%%%%%%%%%%%%%%%%%%%%%%%%%%%%%%%%%%%%%%%%%%%%%%%%%%%%%%%%%%%%%%%%%%%%%%%%%%%%%%%%%%%%%%%%%%%%%%%%%%%%%%%%%%%%%%%%%%%%%%%%%%%%%%%%%%%%%%%%%%%%%%%%%%%%%%%%%%%%%%%%%%%%%%%%%%%%%%%%%%%%%%%%%%%%%%%%%%%%%%%%%%%%%%%%%%%%%%%%%%%%
	
	\section{Semi-Lagrangian schemes for hyperbolic systems with relaxation}\label{sec:A semi-lagrangian scheme for semi-linear hyperbolic relaxation}
	
	In this section, as an application of the conservative reconstruction \eqref{scheme} and \eqref{scheme 2d}, we consider semi-Lagrangian methods to semi-linear hyperbolic relaxation systems. Two semi-linear hyperbolic relaxation system, namely, Xin-Jin system \cite{jin1996numerical} and Broadwell model \cite{broadwell1964shock}, where a relaxation parameter $\kappa$ makes each system stiff as $\kappa \to 0$.
	
	In order to treat the stiffness, we shall use L-stable $s$-stage DIRK methods or L-stable linear multi-step methods (in particular BDF methods) \cite{HW}. These methods provide a balanced performance between stability and efficiency. 
	
	From now on, we focus on L-stable $s$-stage DIRK methods represented by Butcher's tables:
	\begin{align*}
	\begin{array}{c|c}
	c & A \\
	\hline
	& b^{T} 
	\end{array}
	\end{align*}
	where $A=[a_{k\ell}]$ is a $s \times s$ lower triangle matrix such that $a_{k\ell}=0$ for $\ell>k$, $c= (c_1,...,c_s)^{T}$ and
	$b=(b_1,...,b_s)^{T}$ are coefficient vectors. (For BDF based methods, we refer to \ref{BDF section Xin-Jin}.)
	
	In order to guarantee L-stability, here we make use of \emph{stiffly accurate} schemes (SA), i.e. schemes for which the last row of matrix A is equal to the vector of weights $ a_{sj} = b_j$, $j = 1, ... ,s$. This will ensure that the absolute stability function vanishes at infinity. As a consequence, an A-stable scheme which is SA is also L-stable, \cite{HW}. 
	
	In the numerical tests for each order of accuracy, we will use the following high-order L-stable DIRK methods:
	\begin{itemize} 
		% 	\item first-order implicit Euler method, 
		% 	\begin{align}\label{Butcher_0}
		% 	\begin{array}{c|c}
		% 	1 & 1 \\
		% 	\hline
		% 	& 1
		% 	\end{array}
		% 	\end{align}	
		\item second-order DIRK method (DIRK2) \cite{HWN}, 
		\begin{align}\label{Butcher_1}
		\begin{array}{c|c c}
		\alpha & \alpha & 0 \\
		1 & 1-\alpha & \alpha \\
		\hline
		& 1-\alpha & \alpha
		\end{array},\quad \alpha= 1 -\frac{\sqrt{2}}{2}.
		\end{align} 	
		\item third-order DIRK method (DIRK43) \cite{KC},
		\small{
			\begin{align}\label{Butcher_2}
			\begin{split}
			\begin{array}{c|c c c c}
			0 & 0 & 0 & 0 & 0\\[1.5mm]
			\displaystyle
			2\gamma  & 
			\displaystyle
			\gamma  &
			\displaystyle
			\gamma  & 0&  0\\ [1.5mm]
			\displaystyle
			c &\displaystyle
			c - \delta -\gamma
			&\displaystyle
			\delta & \displaystyle
			\gamma& 0\\[1.5mm]
			1 & \displaystyle
			1-b_2 - b_3 -\gamma  & \displaystyle
			b_2 & \displaystyle
			b_3 &\displaystyle
			\gamma\\[1.5mm]
			\hline \\
			& \displaystyle
			1-b_2 - b_3 -\gamma  & \displaystyle
			b_2 & \displaystyle
			b_3 & \displaystyle
			\gamma\\		
			\end{array}
			\end{split}
			\end{align}}
		
	\end{itemize}
	with $\displaystyle\gamma = \frac{1767732205903}{4055673282236}\, $,  $\displaystyle c = \frac{3}{5}\,$,  $\displaystyle b_2= -\frac{4482444167858}{7529755066697}\, $,  $\displaystyle b_3= \frac{11266239266428}{11593286722821}\,$, $\displaystyle \delta = -\frac{640167445237}{6845629431997}$.
	% \subsection{Runge--Kutta methods}
	%In order to approximate the solution of the equation \eqref{limit} by solving \eqref{hyper system}, a very small number $\kappa$ is necessarily taken, but our system \eqref{hyper system} becomes stiff as $\kappa \to \infty$.

	\subsection{Xin-Jin relaxation system}
	Consider a simplified Xin-Jin relaxation system \cite{jin1996numerical}:
	\begin{align}\label{relaxation system}
	\begin{split}
	\frac{\partial u}{\partial t}+\sum_{i=1}^d \frac{\partial v}{\partial {x_i}}&=0,\cr
	\frac{\partial v}{\partial t}+a^2\sum_{i=1}^d \frac{\partial u}{\partial {x_i}} &=\frac{1}{\kappa}(F(u)-v),\cr
	%u(x,0)&=u_0(x), \quad v(x,0)=v_0(x), \quad x \in \mathbb{R}^d
	\end{split}
	% 		\begin{split}
	% 	u_t+\sum_{i=1}^d v_{x_i}&=0,\cr
	% 	v_t+a^2\sum_{i=1}^d u_{x_i} &=\frac{1}{\kappa}(F(u)-v),\cr
	% 	%u(x,0)&=u_0(x), \quad v(x,0)=v_0(x), \quad x \in \mathbb{R}^d
	% 	\end{split}
	\end{align}
	where $d$ denotes the dimension of space variable. When $\kappa$ goes to zero, the solution in \eqref{relaxation system} converges to 
	\begin{align}\label{limit}
	\frac{\partial u}{\partial t} + \sum_{i=1}^d  \frac{\partial F(u)}{\partial {x_i}} =0, \quad v=F(u).
	% 	u_t + \sum_{i=1}^d  F(u)_{x_i} =0, \quad v=F(u).
	\end{align}
	provided that the subcharacteristic condition is satisfied, i.e., $\max_u{|F'(u)|} \leq |a|$ (see \cite{chen1994hyperbolic}). For example, taking $F(u)=u^2/2$, the system \eqref{limit} formally becomes the Burgers equation:
	\begin{align}\label{burgers}
	\frac{\partial u}{\partial {t}} + \sum_{i=1}^d u\frac{\partial u}{\partial {x_i}} =0, \quad v=\frac{u^2}{2}.
	% 	u_t + \sum_{i=1}^d \left(\frac{u^2}{2}\right)_{x_i} =0, \quad v=\frac{u^2}{2}.
	\end{align}
	In this equation, shocks may appear in a finite time and we need to impose our scheme to be conservative to capture the positions of such shocks correctly. We treat this shock problem in section \ref{sec:Numerical test}.
	
	\subsubsection{Semi-Lagrangian scheme for Xin-Jin relaxation system}
	Using $u-v=f$ and $u+v=g$, we rewrite \eqref{relaxation system} as
	\begin{align}\label{hyper system}
	\begin{split}
	\frac{\partial f}{\partial {t}}-\sum_{i=1}^d \frac{\partial f}{\partial {x_i}} &=-\frac{1}{\kappa}\left[F\left(\frac{g+f}{2}\right)-\frac{g-f}{2}\right]\cr
	\frac{\partial g}{\partial {t}}+\sum_{i=1}^d \frac{\partial g}{\partial {x_i}}   &=-\frac{1}{\kappa}\left[\frac{g-f}{2}-F\left(\frac{g+f}{2}\right)\right].
	\end{split}
	%\begin{split}
	%f_t-\sum_{i=1}^d f_{x_i} &=-\frac{1}{\kappa}\left[F\left(\frac{g+f}{2}\right)-\frac{g-f}{2}\right]\cr
	%g_{t}+\sum_{i=1}^d g_{x_i}   &=-\frac{1}{\kappa}\left[\frac{g-f}{2}-F\left(\frac{g+f}{2}\right)\right],
	%\end{split}
	\end{align}
	Based on this, we consider its Lagrangian formulation:
	%\subsection{First order SL method}
	%We first derive a first order semi-Lagrangian method for the one-dimensional system \eqref{relaxation system}, then we extend the idea to high order methods and two-dimensional case. For this, we rewrite the system \eqref{hyper system} as the following Lagrangian formation:
	\begin{align}\label{hyper system lagrangian}
	\begin{split}
	\frac{df}{dt}(X_1(t),t)&=-\frac{1}{\kappa}\left[F\left(\frac{g+f}{2}\right)-\frac{g-f}{2}\right](X_1(t),t),\quad \frac{dX_1}{dt}=-\mathbbm{1}\cr
	\frac{dg}{dt}(X_2(t),t)&=-\frac{1}{\kappa}\left[\frac{g-f}{2}-F\left(\frac{g+f}{2}\right)\right](X_2(t),t),\quad \frac{dX_2}{dt}=\mathbbm{1},
	\end{split}
	\end{align}
	where $\mathbbm{1}=(1,\cdots,1) \in \mathbb{N}^d$ and $X_1(t^{n+1}) = X_2(t^{n+1})= x_i \in \mathbb{R}^d$.

	% For the approximation of numerical solutions which are not located on grid points, we use our conservative reconstructions. 
	
	To clarify high order methods for \eqref{hyper system lagrangian}, we introduce the following notation:
	\begin{itemize}
		\item The $\ell$-th stage values of $f,g$ along the backward-characteristics which come from $x_i$ with characteristic speed $-1,1$ at time $t^n + c_k \Delta t$:
		\begin{align*}
		\tilde{f}_{i}^{(k,\ell)} &\approx f(x_{i}+(c_k-c_\ell)\Delta t, t^n + c_\ell\Delta t),\quad  		\tilde{g}_{i}^{(k,\ell)} \approx f(x_{i}-(c_k-c_\ell)\Delta t, t^n + c_\ell\Delta t)
		\end{align*}
		where "$\approx$" implies the necessity of suitable reconstructions. We also denote $k$-th stage value of $f,g$ on $x_i$ by
		\begin{align*}
		f_{i}^{(k)} &= f(x_{i}, t^n + c_k\Delta t),\quad g_{i}^{(k)} = g(x_{i}, t^n + c_k\Delta t)
		\end{align*}
		for $1 \leq k \leq s$ where $f_{i}^{(k)} = u_{i}^{(k)} - v_i^{(k)}$ and $g_{i}^{(k)} = u_{i}^{(k)} + v_i^{(k)}$. 
		% 	Similarly for $g = u+v$.%_{i}^{(k,\ell)} = \frac{g_{i}^{(k,\ell)} - f_i^{(k,\ell)}}{2}
		\item
		For $\ell=0$, we set $c_\ell=0$ hence
		\begin{equation}\label{tilde}
		\tilde{f}_{i}^{(k,0)} \approx f(x_{i}+ c_k\Delta t, t^n),\quad \tilde{g}_{i}^{(k,0)} \approx g(x_{i}- c_k\Delta t, t^n).
		\end{equation}
		% 	\item Characteristic foots:
		% 	\begin{align*}
		% 		x_{i,j}^{k, \ell}:= x_{i}-\lambda_{j}(c_k-c_\ell) \Delta t.
		% 	\end{align*}
		
		\item Define a RK flux function by $K_1:=F(u)-v$, $K_2:=-K_1$, then
		\[
		K_{i,j}^{(k,\ell)} \approx K_j(x_{i}-\lambda_j(c_k-c_\ell)\Delta t, t^n + c_\ell\Delta t), \quad j = 1,2 	\]
		where $\lambda_1=-1,\lambda_2=1$ and 
		$K_{i,j}^{(k)}  = K_j(x_{i}, t^n + c_k\Delta t)$. 
	\end{itemize} 
	%\subsection{High order SL methods}
	%\subsubsection{High-order semi-Lagrangian scheme for Xin-Jin relaxation system} 
	%High order discretization in time can be obtained by Runge-Kutta methods (RK) or multi-step methods, \cite{HWN}.
	
	With these, we can represent a high order method compactly. Applying a L-stable $s$-stage DIRK method to system (\ref{hyper system lagrangian}), we have $k$-stage values
	\begin{align}\label{hyper system1 lagrangian}
	\begin{split}
	f_{i}^{(k)}&=\tilde{f}_i^{(k,0)}-\frac{ \Delta t}{\kappa}\sum_{\ell = 1}^s a_{k\ell}K_{i,1}^{(k,\ell)},\cr
	g_i^{(k)}&=\tilde{g}_i^{(k,0)}-\frac{ \Delta t}{\kappa}\sum_{\ell = 1}^s a_{k\ell}K_{i,2}^{(k,\ell)},
	\end{split}
	\end{align}
	for $k=1,\dots,s$. Since we only consider SA DIRK schemes, the $s$-stage values become the numerical solutions: $f^{n+1}_{i} = f_{i}^{(s)}$ and $g^{n+1}_{i} = g_{i}^{(s)}$. It is worth mentioning that each $k$-stage value can be computed in an explicit way. After summing and subtracting two equations in (\ref{hyper system1 lagrangian}), we obtain
	\begin{align}\label{hyper system1 lagrangian 2}
	\begin{split}
	u_i^{(k)}&=\frac{\tilde{g}_i^{(k,0)} + \tilde{f}_i^{(k,0)}}{2} -\frac{\Delta t}{2\kappa}\sum_{\ell = 1}^{k -1}a_{k\ell} \left(K_{i,1}^{(k,\ell)}+ K_{i,2}^{(k,\ell)}\right),\cr
	%	v_i^{(k)}&=\frac{   \kappa \left(\tilde{g}_i^{(k,0)}-\tilde{f}_i^{(k,0)}\right)\big/2  -\Delta t\left(\sum_{\ell = 1}^{k -1}a_{k\ell}\left(K_{i,2}^{(k,\ell)}-K_{i,1}^{(k,\ell)}\right)\right)\big/2    }{\kappa + a_{kk} \Delta t}\cr
	%	&\quad +\frac{ a_{kk} \Delta t F\left(u_i^{(k)}\right)    }{\kappa + a_{kk} \Delta t}.
	v_i^{(k)}&=\frac{\tilde{g}_i^{(k,0)}-\tilde{f}_i^{(k,0)}}{2}  -\frac{\Delta t}{2\kappa} \left(\sum_{\ell = 1}^{k -1}a_{k\ell}\left(K_{i,2}^{(k,\ell)}-K_{i,1}^{(k,\ell)}\right)\right) + 
	\frac{a_{kk} \Delta t}{\kappa} \left(F(u_i^{(k)}) -v_i^{(k)}\right).
	\end{split}
	\end{align} 
	Here we first compute $u_i^{(k)}$, and use it obtain $v_i^{(k)}$. Now we illustrate our L-stable DIRK schemes as follows: (A schematic for DIRK2 based scheme is given in Fig \ref{RK2 schematic}.)
	\begin{figure}[!t]
		\centering
		\begin{subfigure}[b]{0.9\linewidth}
			\includegraphics[width=0.8\linewidth]{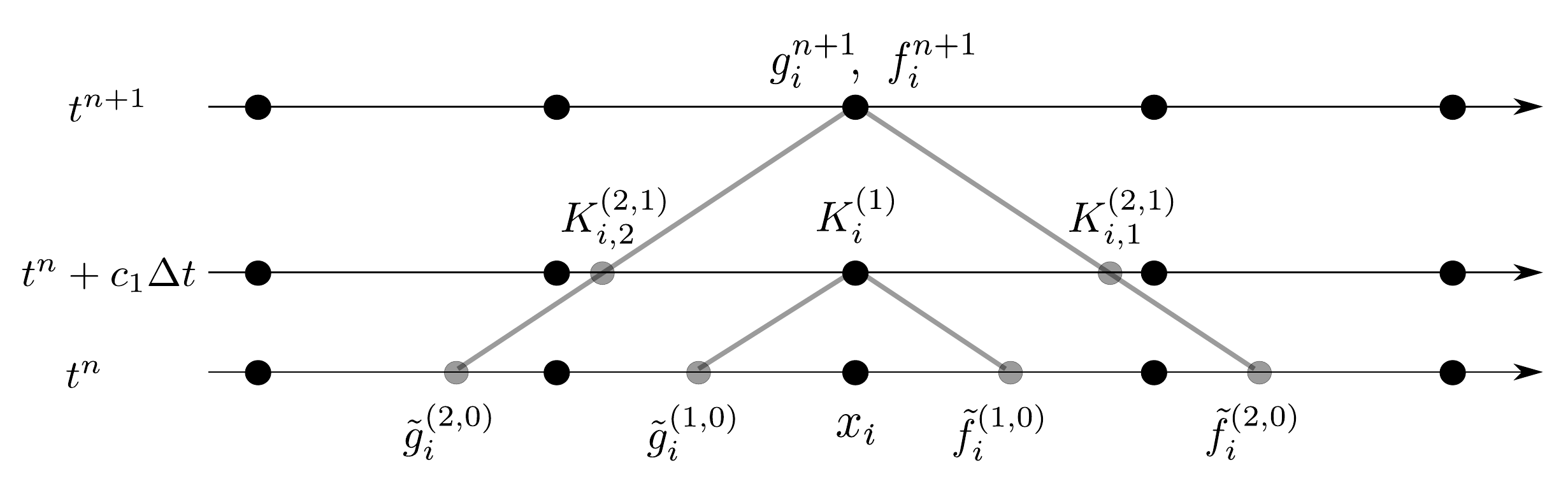}
		\end{subfigure}	
		\caption{Schematic of DIRK2 based SL method for Xin-Jin model. Gray circles are points where reconstruction is required.}\label{RK2 schematic}  	
	\end{figure}

	\subsubsection{Algorithm of $s$-stage L-stable DIRK method}\label{algorithm}
	
	For $k=1,\dots,s$.
	\begin{enumerate}
		\item Interpolate $\tilde{f}_{i}^{(k,0)}$ and $\tilde{g}_{i}^{(k,0)}$ on $x_{i}+c_k\Delta t$ and $x_{i}-c_k\Delta t$ from $\{f_{i}^{n}\}$ and $\{g_{i}^{n}\}$, respectively.
		\item
		Compute $u_i^{(k)}$ and $v_i^{(k)}$ from (\ref{hyper system1 lagrangian 2}).
		%	\begin{align*}
		%	u_i^{(k)}&=\frac{\tilde{g}_i^{(k,0)} + \tilde{f}_i^{(k,0)} -\frac{\Delta t}{\kappa}\sum_{\ell = 1}^{k -1}a_{k\ell} \left(K_{i,1}^{(k,\ell)}+ K_{i,2}^{(k,\ell)}\right)}{2}\cr
		%	v_i^{(k)}&=\frac{   \kappa \left(\tilde{g}_i^{(k,0)}-\tilde{f}_i^{(k,0)}\right)\big/2  -\Delta t\left(\sum_{\ell = 1}^{k -1}a_{k\ell}\left(K_{i,2}^{(k,\ell)}-K_{i,1}^{(k,\ell)}\right)\right)\big/2    }{\kappa + a_{kk} \Delta t}\cr
		%	&\quad +\frac{ a_{kk} \Delta t F\left(u_i^{(k)}\right)    }{\kappa + a_{kk} \Delta t}.
		%	\end{align*}
		\item Compute:
		\begin{equation}\label{first order solution}
		f_i^{(k)}=u_i^{(k)}-v_i^{(k)}, \quad g_i^{(k)}=u_i^{(k)} + v_i^{(k)}
		\end{equation}
		
		\item If $k<s$, compute
		\begin{align*}
		K_{i,1}^{(k)}&=F(u_i^{(k)})-v_i^{(k)}, \quad K_{i,2}^{(k)}=-K_{i,1}^{(k)}
		\end{align*}
		and, for $\ell = k+1, \cdots, s$, interpolate 
		\begin{align*}
		&\text{$K_{i,1}^{(\ell,k)}$ on $x_{i}+(c_\ell-c_k)\Delta t$  from $\{K_{i,1}^{(k)}\}$},\cr
		&\text{$K_{i,2}^{(\ell,k)}$ on $x_{i}-(c_\ell-c_k)\Delta t$ from $\{K_{i,2}^{(k)}\}$}.
		\end{align*}
		\item Compute numerical solution: $f^{n+1}_{i} = f_{i}^{(s)}$ and $g^{n+1}_{i} = g_{i}^{(s)}$.
	\end{enumerate}
	%Since we only consider SA DIRK schemes, the $s$-stage values become numerical solutions: $f^{n+1}_{i} = f_{i}^{(s)}$ and $g^{n+1}_{i} = g_{i}^{(s)}$.
	For any term where reconstruction is required, we use the formula \eqref{scheme} based on the CWENO reconstructions.

	\begin{remark}
		In the Algorithm \ref{algorithm}, using the implicit Euler method for $s = 1$ and taking a limit $\kappa \to 0$ in (\ref{hyper system1 lagrangian 2}), we obtain %for for $v_i$
		\begin{align}\label{ap}
		u_i^{n+1}&=\frac{\tilde{g}_i^{(1,0)} + \tilde{f}_i^{(1,0)}}{2}, \quad v_i^{n+1}= F(u_i^{n+1}),
		\end{align}
		for all $n\geq 0$ regardless of initial data. 
		Now assume that $\Delta t = \Delta x$, and we combine \eqref{ap} with \eqref{tilde}, and \eqref{first order solution} for $k = 1$ obtaining
		%\begin{align*}
		%\begin{split}
		%u_i^{n+1}&=\frac{g(x_i-\Delta x, t^n) + f(x_i+\Delta x, t^n)}{2}\cr
		%&=\frac{u_{i-1}^{n}+v_{i-1}^{n} + u_{i+1}^{n}-v_{i+1}^{n}}{2}.
		%\end{split}
		%\end{align*}
		%Therefore, we obtain
		\begin{align*}
		\begin{split}
		u_i^{n+1}&=\frac{1}{2}\left(u_{i+1}^n + u_{i-1}^n\right) - \frac{1}{2}\left(F\left(u_{i+1}^n\right) - F\left(u_{i-1}^n\right)\right).
		\end{split}
		\end{align*}
		This is the Lax–Friedrichs method of the conservation law in \eqref{limit}
		%\begin{align}
		%\begin{split}
		%u_i^{n+1}&=\frac{\Delta t}{2\Delta x}\left(u_{i+1}^n + u_{i-1}^n\right) - \frac{\Delta t}{2\Delta x}\left(F\left(u_{i+1}^n\right) - F\left(u_{i-1}^n\right)\right),
		%\end{split}
		%\end{align}
		with $\Delta t= \Delta x$. 
	\end{remark}

	\subsection{Broadwell model}
	Next example is the Broadwell model of kinetic theory \cite{broadwell1964shock}:
	\begin{align}\label{Broadwell}
	\begin{split}
	\partial_t f+\partial_x f&= \frac{1}{\kappa}Q\cr
	\partial_t g-\partial_x g&= \frac{1}{\kappa}Q\cr
	\partial_t h&= -\frac{1}{\kappa}Q.
	\end{split}
	\end{align}	
	where $Q=h^2-fg$. Introducing the fluid dynamic moment variables 
	dentity $\rho$, momentum $m$, and velocity $u$ and an additional variable $z$ as follows:
	\begin{align}\label{rhomz def}
	\rho= f+2h+g, \quad m= f-g, \quad z= f+g,
	\end{align}
	the system \eqref{Broadwell} can be rewritten as
	\begin{align}\label{Broadwell 2}
	\begin{split}
	&\partial_t \rho +\partial_x m= 0\cr
	&\partial_t m+\partial_x z= 0\cr
	&\partial_t z +\partial_x m = \frac{1}{2\kappa}\left(\rho^2-2\rho z +m^2\right).
	\end{split}
	\end{align}	
	Note that the original variables can be recovered by 
	\[
	f= \frac{z+m}{2},\quad g= \frac{z-m}{2},\quad h= \frac{\rho -z}{2}.
	\]
	As $\kappa \rightarrow 0$, one can see that $z$ goes to a local equibrium
	\[
	z\rightarrow z_E(\rho,m):= \frac{1}{2\rho}\left(\rho^2 + m^2\right) = \frac{1}{2}\left(\rho + \rho u^2\right)
	\]
	and the system \eqref{Broadwell 2} becomes the Euler equations:
	\begin{align}\label{Broadwell 3}
	\begin{split}
	&\partial_t \rho +\partial_x m= 0\cr
	&\partial_t m+\partial_x \left(\frac{1}{2}\left(\rho + \rho u^2\right)\right)= 0.
	\end{split}
	\end{align}

	\subsubsection{Semi-Lagrangian scheme for the Broadwell model} 
	Here, we consider again DIRK methods based on Tables \eqref{Butcher_1}-\eqref{Butcher_2}. The schemes are also explicitly solvable with algebraic computations. (For BDF methods, we refer to \eqref{BDF methods for Broadwell model}.)
	
	Let us denote $k$-th stage values by $f_i^{(k)}$, $g_i^{(k)}$, $h_i^{(k)}$, $1\leq k\leq s$, and introduce the following notation:
	\begin{align*}
	Q_i^{(k)}&= (h_i^{(k)})^2-f_i^{(k)}g_i^{(k)}, \cr
	Q_{i,1}^{(k,\ell)}&\approx Q(x_i-(c_k-c_\ell)\Delta t,t^n + c_\ell \Delta t), \quad Q_{i,2}^{(k,\ell)} \approx Q(x_i+(c_k-c_\ell)\Delta t,t^n + c_\ell \Delta t),	\cr	
	f_i^{(k,\ell)}&\approx f(x_i-(c_k- c_\ell) \Delta t,t^n + c_\ell \Delta t), \quad g_i^{(k,\ell)}\approx g(x_i-(c_k- c_\ell) \Delta t,t^n + c_\ell \Delta t).
	\end{align*}
	Applying a $s$-stage DIRK method to \eqref{Broadwell}, we can write $k$-th stage values in a compact form: 
	% \begin{align}\label{s-DIRK}
	% 	\begin{split}
	% 		f_i^{(k)}&= f_i^{(k,0)} + \frac{\Delta t}{\kappa}\sum_{\ell=1}^{k-1} a_{k\ell}Q_{i,1}^{(k,\ell)} + \frac{a_{kk}\Delta t}{\kappa} Q_i^{(k)},\cr
	% 		g_i^{(k)}&= g_i^{(k,0)} + \frac{\Delta t}{\kappa}\sum_{\ell=1}^{k-1} a_{k\ell}Q_{i,2}^{(k,\ell)} + \frac{a_{kk}\Delta t}{\kappa} Q_i^{(k)},\cr
	% 		h_i^{(k)}&= h_i^{n} - \frac{\Delta t}{\kappa}\sum_{\ell=1}^{k-1} a_{k\ell}Q_{i}^{(\ell)} - \frac{a_{kk}\Delta t}{\kappa} Q_i^{(k)},
	% 	\end{split}
	% \end{align}
	% for $k=1,2,\dots, s$.  
	%
	% Now, we rewrite \eqref{s-DIRK} more compactly as follows:
	\begin{align}\label{s-DIRK rewrite}
	\begin{split}
	f_i^{(k)}&= F_i^{(k)} + \frac{a_{kk}\Delta t}{\kappa} Q_i^{(k)},\quad F_i^{(k)}:=f_i^{(k,0)} + \frac{\Delta t}{\kappa}\sum_{\ell=1}^{k-1} a_{k\ell}Q_{i,1}^{(k,\ell)}\cr
	g_i^{(k)}&= G_i^{(k)} + \frac{a_{kk}\Delta t}{\kappa}Q_i^{(k)},\quad G_i^{(k)}:=g_i^{(k,0)} + \frac{\Delta t}{\kappa}\sum_{\ell=1}^{k-1} a_{k\ell}Q_{i,2}^{(k,\ell)}\cr
	h_i^{(k)}&= H_i^{(k)} - \frac{a_{kk}\Delta t}{\kappa} Q_i^{(k)},\quad H_i^{(k)}:=h_i^{n} - \frac{\Delta t}{\kappa}\sum_{\ell=1}^{k-1} a_{k\ell}Q_{i}^{(\ell)},
	\end{split}
	\end{align}
	for $k=1,2,\dots, s$. Here the SA property also implies $f_i^{n+1} = f_i^{(s)}$, $g_i^{n+1} =g_i^{(s)}$ and $h_i^{n+1}= h_i^{(s)}$.
	
	Now, we describe the algorithm:
	\subsubsection{Algorithm of $s$-stage L-stable DIRK method}\label{algorithm b}
	For $k=1,\cdots,s$, iterate the following procedures:
	\begin{enumerate}
		\item Reconstruct $f_i^{(k,0)}$ and 
		$g_i^{(k,0)}$ on $x_{i}-c_k\Delta t$ and $x_{i}+c_k\Delta t$ from $\{f_{i}^{n}\}$ and $\{g_{i}^{n}\}$, respectively.
		\item Reconstruct $Q_{i,1}^{(k,\ell)}$ and 
		$Q_{i,2}^{(k,\ell)}$ for $\ell=1,\cdots,k-1$ from $\{Q_i^{(\ell)}\}$ (skip this if $k=1$).
		\item Compute $F_i^{(k)}$, $G_i^{(k)}$ and $H_i^{(k)}$ using \eqref{s-DIRK rewrite}.
		\item Solve
		\begin{align}\label{first stage broadwell}
		\begin{split}
		f_i^{(k)}= F_i^{(k)} + \frac{a_{kk}\Delta t}{\kappa} Q_i^{(k)},\quad 
		g_i^{(k)}= G_i^{(k)} + \frac{a_{kk}\Delta t}{\kappa}Q_i^{(k)},\quad
		h_i^{(k)}= H_i^{(k)} - \frac{a_{kk}\Delta t}{\kappa} Q_i^{(k)}
		\end{split}
		\end{align}
		for 
		\begin{align}\label{hfg}
		\begin{split}
		&h_i^{(k)} = \frac{a_{kk}\Delta t( H_i^{(k)}+ F_i^{(k)} )( H_i^{(k)} + G_i^{(k)})+ \kappa H_i^{(k)}}{ a_{kk}\Delta t \left(G_i^{(k)} + 2 H_i^{(k)} + F_i^{(k)}\right) + \kappa},\cr
		&f_i^{(k)} =  H_i^{(k)} + F_i^{(k)}-  h_i^{(k)},\qquad 
		g_i^{(k)}=  H_i^{(k)} + G_i^{(k)}  -  h_i^{(k)}.
		\end{split}
		\end{align}
		% 	\item Compute
		% 	$Q_i^{(k)}= (h_i^{(k)})^2-f_i^{(k)}g_i^{(k)}$. (Skip this if $k=s$.)
	\end{enumerate}

	\begin{remark}
		In Algorithm \ref{algorithm b}, consider the case $s = 1$ for implicit Euler method. Under the assumption  $\Delta t = \Delta x$, the relaxation limit $\kappa \to 0$ in \eqref{hfg} gives 
		\begin{align}
		\begin{split}
		&h_i^{n+1} = \frac{( h_i^{n}+ f_{i-1}^n )( h_i^{n} + g_{i+1}^n)}{ g_{i+1}^n + 2 h_i^{n} + f_{i-1}^n },\cr
		&f_i^{n+1} =  h_i^{n} + f_i^{(1,0)} -  h_i^{n+1},\qquad
		g_i^{n+1}=  h_i^{n} + g_i^{(1,0)}  -  h_i^{n+1}.
		\end{split}
		\end{align}
		This limiting scheme coincides with the relaxation scheme in \cite{jin1995relaxation} applied to the Broadwell model. Also, using the relation \eqref{rhomz def}, we can rewrite it as follows:
		%\begin{align*}
		%	\rho_i^{n+1} 
		%	= f_i^{n+1} + 2h_i^{n+1}  +  g_i^{n+1} &=  2h_i^{n} + f_{i-1}^{n} +  g_{i+1}^{n}\cr 
		%	&= 2 \frac{\rho_i^{n} -z_i^{n}}{2} + \frac{z_{i-1}^{n}+m_{i-1}^{n}}{2}+ \frac{z_{i+1}^{n}-m_{i+1}^{n}}{2}\cr
		%	&= \rho_i^{n} - \frac{1}{2}\left(m_{i+1}^{n}-m_{i-1}^{n}\right)
		%	+ \frac{1}{2}\left(z_{i-1}^{n}-2 z_i^{n} + z_{i-1}^{n}\right).
		%\end{align*}
		%\begin{align*}
		%m_i^{n+1} 
		%= f_i^{n+1} - g_i^{n+1} =  f_{i-1}^{n} -  g_{i+1}^{n} 
		%= \frac{1}{2}\left(m_{i+1}^{n} + m_{i-1}^{n}\right) - \frac{1}{2}\left(z_{i+1}^{n} - z_{i-1}^{n}\right),
		%\end{align*}
		%\begin{align*}
		%z_i^{n+1} 
		%= f_i^{n+1} + g_i^{n+1} &= 2h_i^{n} + f_{i-1}^{n} + g_{i+1}^{n} -  2h_i^{n+1}\cr
		%&= 2h_i^{n} + f_{i-1}^{n} + g_{i+1}^{n}-2\left(\frac{( h_i^{n}+ f_{i-1}^{(1,0)} )( h_i^{n} + g_{i+1}^{(1,0)})}{ g_{i+1}^{(1,0)} + 2 h_i^{n} + f_{i-1}^{(1,0)} }\right)\cr
		%&= \frac{ \left(2h_i^{n} + f_{i-1}^{n} + g_{i+1}^{n}\right)^2-2( h_i^{n}+ f_{i-1}^{(1,0)} )( h_i^{n} + g_{i+1}^{(1,0)})}{ g_{i+1}^{(1,0)} + 2 h_i^{n} + f_{i-1}^{(1,0)} }\cr
		%&= \frac{ ( h_i^{n}+ f_{i-1}^{(1,0)} )^2 + ( h_i^{n} + g_{i+1}^{(1,0)})^2}{ g_{i+1}^{(1,0)} + 2 h_i^{n} + f_{i-1}^{(1,0)} }\cr
		%&= \frac{ \left(\frac{\rho_i^{n+1}+m_i^{n+1}}{2}\right)^2 + \left(\frac{\rho_i^{n+1}-m_i^{n+1}}{2}\right)^2}{ \rho_i^{n+1}}\cr
		%&= \frac{(\rho_i^{n+1})^2+(m_i^{n+1})^2}{ 2\rho_i^{n+1}},
		%\end{align*}
		\begin{align*}
		\rho_i^{n+1} &= \rho_i^{n} - \frac{1}{2}\left(m_{i+1}^{n}-m_{i-1}^{n}\right)
		+ \frac{1}{2}\left(z_{i-1}^{n}-2 z_i^{n} + z_{i-1}^{n}\right),\cr
		m_i^{n+1} &= \frac{1}{2}\left(m_{i+1}^{n} + m_{i-1}^{n}\right) - \frac{1}{2}\left(z_{i+1}^{n} - z_{i-1}^{n}\right),\cr
		z_i^{n+1} &= \frac{(\rho_i^{n+1})^2+(m_i^{n+1})^2}{ 2\rho_i^{n+1}}.
		\end{align*}
		We note that the scheme projects numerical solutions to equilibrium after one time step. 
	\end{remark}

	%%%%%%%%%%%%%%%%%%%%%%%%%%%%%%%%%%%%%%%%%%%%%%%%%%%%%%%%%%%%%%%%%%%%%%%%%%%%%%%%%%%%%%%%%%%%%%%%%%%%%%%%%%%%%%%%%%%%%%%%%%%%%%%%%%%%%%%%%%%%%%%%%%%%%%%%%%%%%%%%%%%%%%%%%%%%%%%%%%%%%%%%%%%%%%%%%%%%%%%%%%%%%%%%%%%%%%%%%%%%%%%%%%%%%%%%%%%%%%%%%%%%%%%%%%%%%%%%%%%%%%%%%%%%%%%%%%%%%%%%%%%%%%%%%%%%%%%%%%%%%%%%%%%%%%%%%%%%%%%%%%%%%%%%%%%%%%%%%%%%%%%%%%%%%%%%%%%%%%%%
	\section{Numerical tests}\label{sec:Numerical test}
	Our main interest is to confirm the performance of the proposed reconstruction in one and two dimensions. For numerical experiments, we consider the reconstruction \eqref{scheme} and \eqref{scheme 2d} based on CWENO reconstructions. This section is divided into three parts: 1D Xin-Jin model \eqref{relaxation system}, 1D Broadwell model \eqref{Broadwell} and 2D Xin-Jin model \eqref{relaxation system}. For each system, we check the accuracy of the corresponding semi-Lagrangian schemes and consider the related shock problems which arise in the relaxation limit $\kappa \rightarrow 0$. 
	For numerical tests, we use the CFL number defined by CFL$=\frac{\Delta t}{\Delta x}$ using uniform grid points based on $\Delta x$ and $\Delta t$. For 2D, we use CFL$=\frac{\Delta t}{\Delta x}=\frac{\Delta t}{\Delta y}$.

	\subsection{1D case for Xin-Jin model}\label{1d xinjin test}
	Here tests are based on the numerical method in Algorithm \ref{algorithm}. Note that we adopt $F(u)=u^2/2$.
	\subsubsection{Accuracy test}\label{xinjin 1d accruacy}
	We take well-prepared initial data up to first order in $\kappa$ \cite{boscarino2009class}:
	\begin{align}\label{initial 1d xinjin accuracy}
	u_0(x)=0.7+0.2 \sin(\pi x), \quad v_0(x)= \frac{u_0^2(x)}{2} + \kappa \left( u_0^2(x) -1\right) \partial_x u_0(x),
	\end{align}
	where periodic boundary conditions are imposed on $ x \in [-1, 1]$. In the limit $\kappa \rightarrow 0$ with $F(u)=u^2/2$, system \eqref{relaxation system} becomes the Burgers equation where shock appears after the positive minimum time: $\displaystyle T_b:= \inf_{u_0'<0} \left\{-\frac{1}{u_0'(x)}\right\} $. In view of this, we take a final time as $T^f=1$ which is less than the breaking time $T_b= 5/\pi \approx 1.5915$. %We remark that CFL condition $max_u|F'(u_0(x))|\frac{\Delta t}{\Delta x} \leq 1$ should be satisfied for the stability of the numerical solutions of relaxation system. 
	In this test, we use several values of CFL=$\Delta t/\Delta x <1$.
	%CFL less than $1$ for which we have $\max_x|F'(u_0(x))=1$. 
	We remark that the subcharacteristic condition $\max_u|F'(u)|<1$ is always satisfied. In Fig. \ref{fig 2}, a DIRK2 based method attains its desired accuracy between 2 and 3. In the case of DIRK43 method, it attains its desired accuracy between 3 and 5 except for some order reductions which appear in the intermediate regimes. We remark that the spatial errors are dominant for small CFL numbers, which make it easy to observe the order of spatial reconstructions. 
	\begin{figure}[htbp]
		\centering
		\begin{subfigure}[b]{0.45\linewidth}
			\includegraphics[width=1\linewidth]{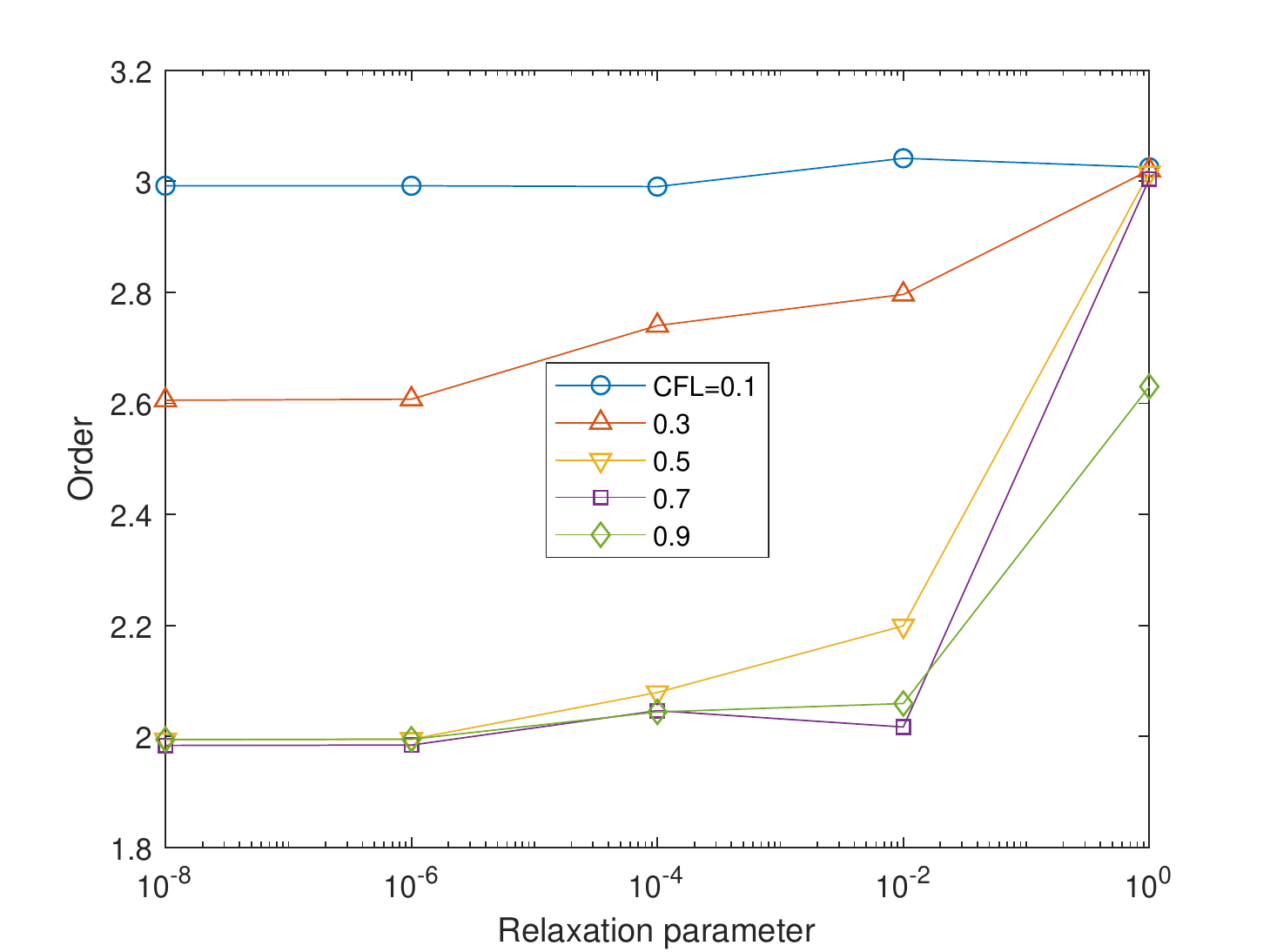}
			\subcaption{DIRK2 - Q-CWENO23}
		\end{subfigure}	
		\begin{subfigure}[b]{0.45\linewidth}
			\includegraphics[width=1\linewidth]{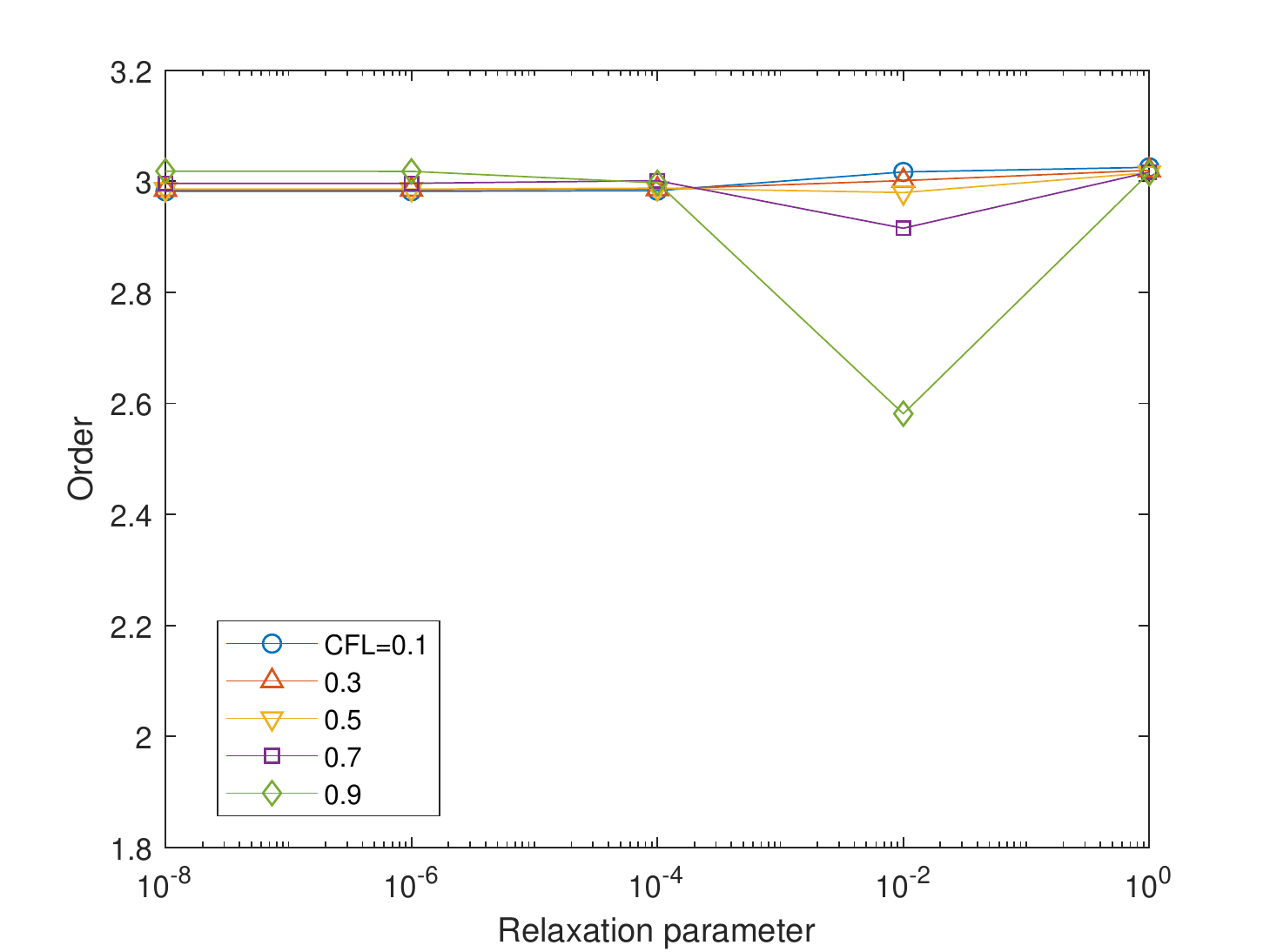}
			\subcaption{DIRK43 - Q-CWENO23}
		\end{subfigure}	
		
		%	\begin{subfigure}[b]{0.49\linewidth}
		%		%		\includegraphics[width=1\linewidth]{figures/BDF2ordertest0408_CWENO3Z}
		%		\includegraphics[width=1\linewidth]{figures/BDF2QCWENO3ordertest_hT1_640}	
		%		\subcaption{BDF2 - Q-CWENO23}
		%	\end{subfigure}	
		\begin{subfigure}[b]{0.45\linewidth}
			\includegraphics[width=1\linewidth]{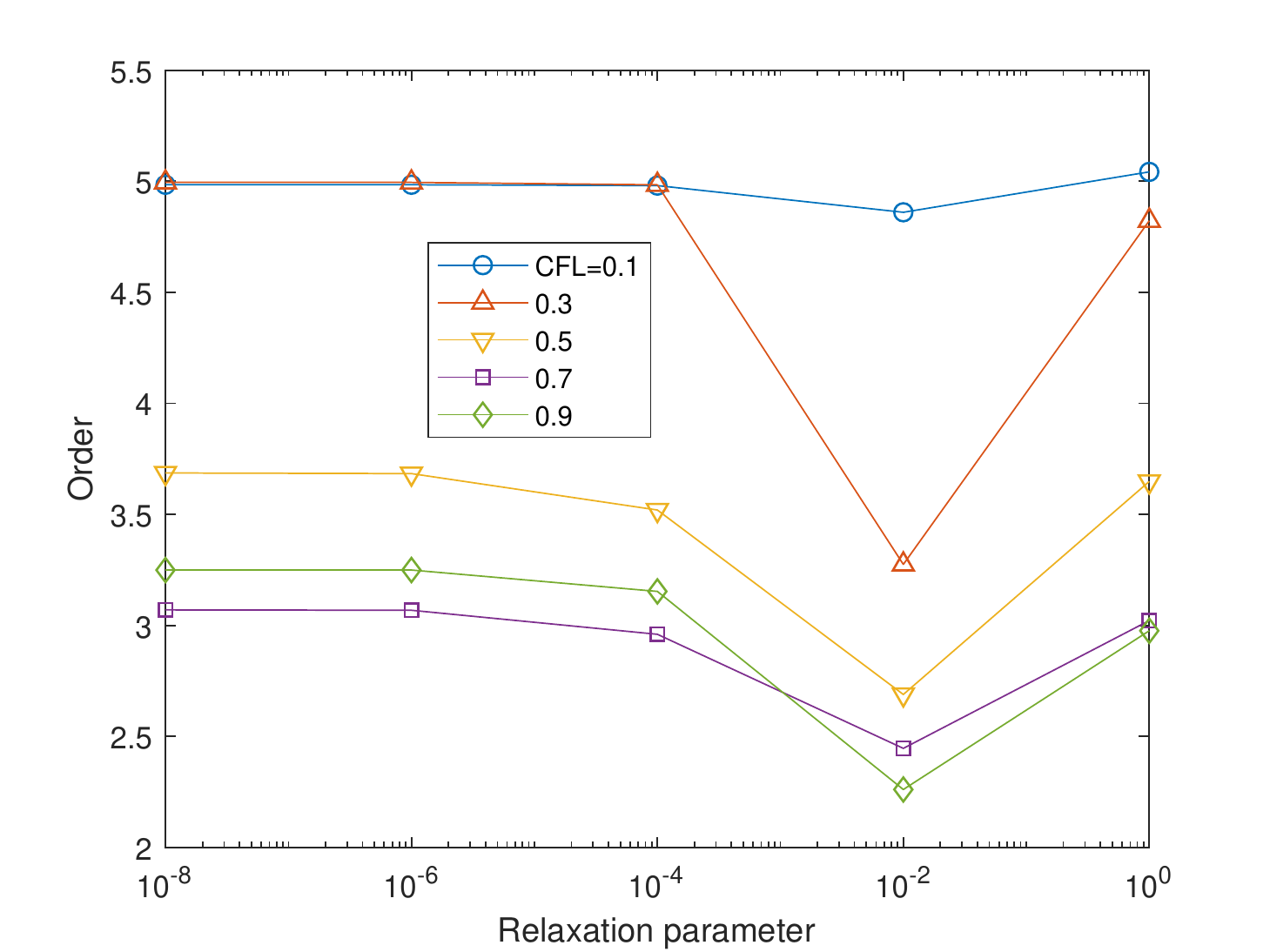}	
			\subcaption{DIRK43 - Q-CWENO35}
		\end{subfigure}	
		%	\begin{subfigure}[b]{0.49\linewidth}
		%		\includegraphics[width=1\linewidth]{figures/BDF3QCWENO5ordertest_hT1_640}
		%		\subcaption{BDF3 - Q-CWENO5}
		%	\end{subfigure}	
		\caption{Accuracy tests for 1D Xin-Jin model. Initial data is associated to \eqref{initial 1d xinjin accuracy}. $x$-axis is for the relaxation parameter $\kappa$ and $y$-axis is for order of accuracy based on $N_x=160,320,640$.}\label{fig 2}  	
	\end{figure}

	\subsubsection{Shock tests}\label{shock 1d xinjin data}
	To confirm the conservation property of the proposed reconstruction in shock problems, we here compare numerical solutions obtained by conservative semi-Lagrangian schemes with non-conservative ones.
	%For this, we consider smooth and non-smooth initial data, and compare the corresponding shock positions, respectively.
	%Furthermore, we also report the results obtained by applying non-conservative reconstructions to the semi-Lagrangian methods together with conservation errors with respect to time.
	
	{\bf $\bullet$ Smooth initial data.}
	We first take the an smooth initial data 
	\begin{align}\label{smooth initial xinjin 1d}
	u_0(x)=0.7+0.2\sin(\pi x), \quad v_0(x)= \frac{u_0^2(x)}{2},
	\end{align}
	where periodic boundary condition is imposed on $ x \in [-1, 1]$. We use grid points of $N_x=160$ up to final time $T^f=4$. Each time step is taken by $\Delta t= \text{CFL} \Delta x$. For each time $t=t^n$, we compute the conservation error using
	\[
	E_{con}^n:= \frac{\left|\sum_i u_i^{n} \Delta x - \sum_i u_i^{0} \Delta x\right|}{\sum_i u_i^{0} \Delta x}.
	\]
	In Fig. \ref{fig 3}, we compare the numerical solutions obtained from our reconstruction, linear interpolation (first order scheme), GWENO34 and GWENO46 \cite{CFR} with the reference solution in \cite{W}. We observe that the use of our reconstruction and linear interpolation leads to correct shock position. Also, the
	corresponding conservative errors show very small change as time flows. In contrast, conservation errors become bigger when we adopt GWENO34 and GWENO46 reconstructions after time $t=1$, which give wrong shock positions. (See Fig. \ref{fig 3}) \\
	
	\begin{figure}[htbp]
		\centering
		%	\begin{subfigure}[b]{0.49\linewidth}
		%		\includegraphics[width=1\linewidth]{figures/rk2shocktest0405_epsT4_modify}
		%		\subcaption{Comparison of numerical solutions w.r.t. reconstruction at $x \in[-1,1]$}
		%	\end{subfigure}		
		\begin{subfigure}[b]{0.45\linewidth}
			\includegraphics[width=1\linewidth]{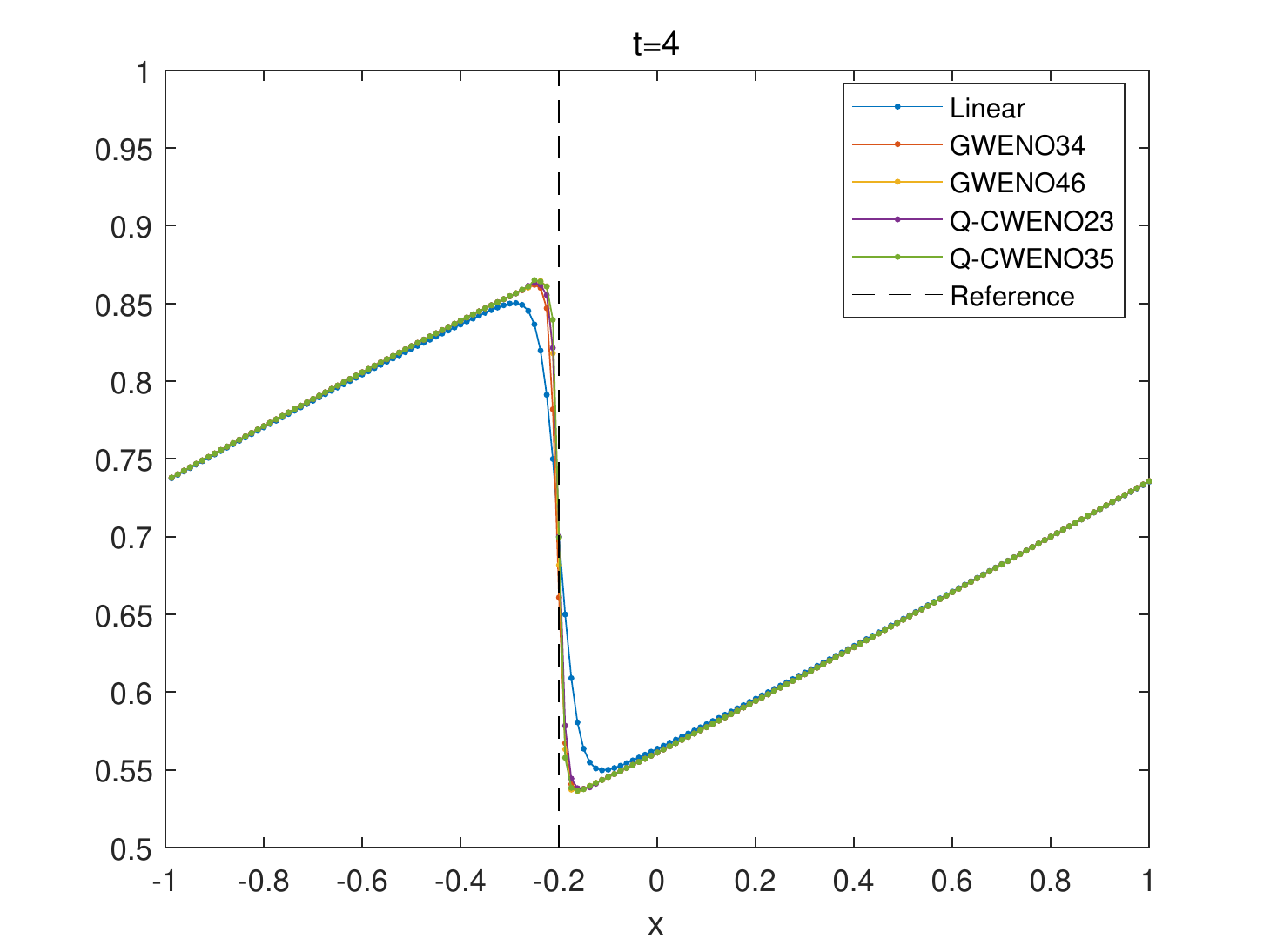}
			\subcaption{Comparison of numerical solutions w.r.t. reconstruction at $x \in[-1,1]$}
		\end{subfigure}	
		\begin{subfigure}[b]{0.45\linewidth}
			\includegraphics[width=1\linewidth]{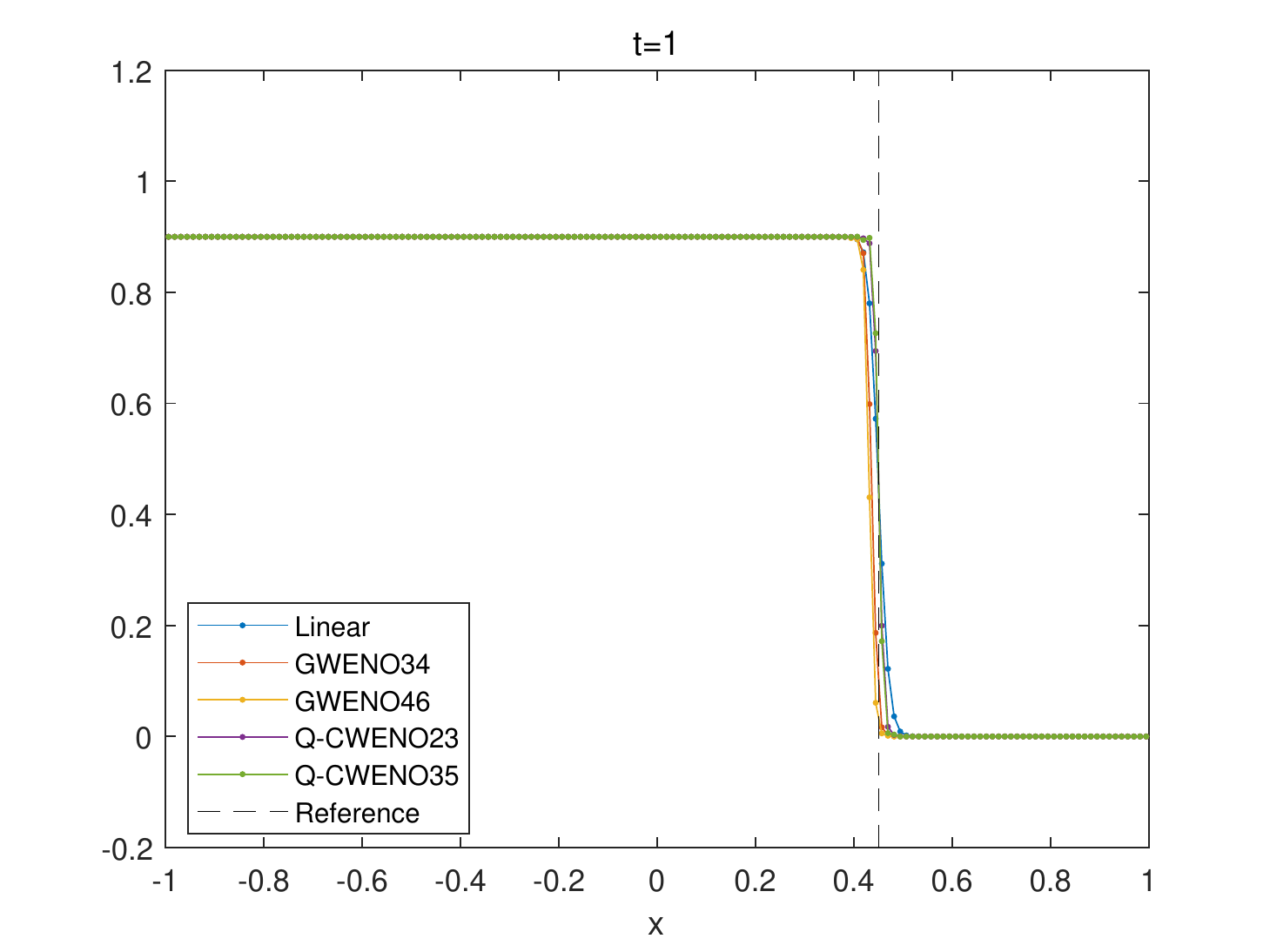}
			\subcaption{Comparison of numerical solutions w.r.t. reconstruction at $x \in[-1,1]$}
		\end{subfigure}		
		%	\begin{subfigure}[b]{0.49\linewidth}
		%		\includegraphics[width=1\linewidth]{figures/rk2shocktest0405_epsT4_zoom_modify}
		%		\subcaption{Comparison of numerical solutions w.r.t. reconstruction at $x \in[0.1,0.3]$}
		%	\end{subfigure}	
		\begin{subfigure}[b]{0.45\linewidth}
			\includegraphics[width=1\linewidth]{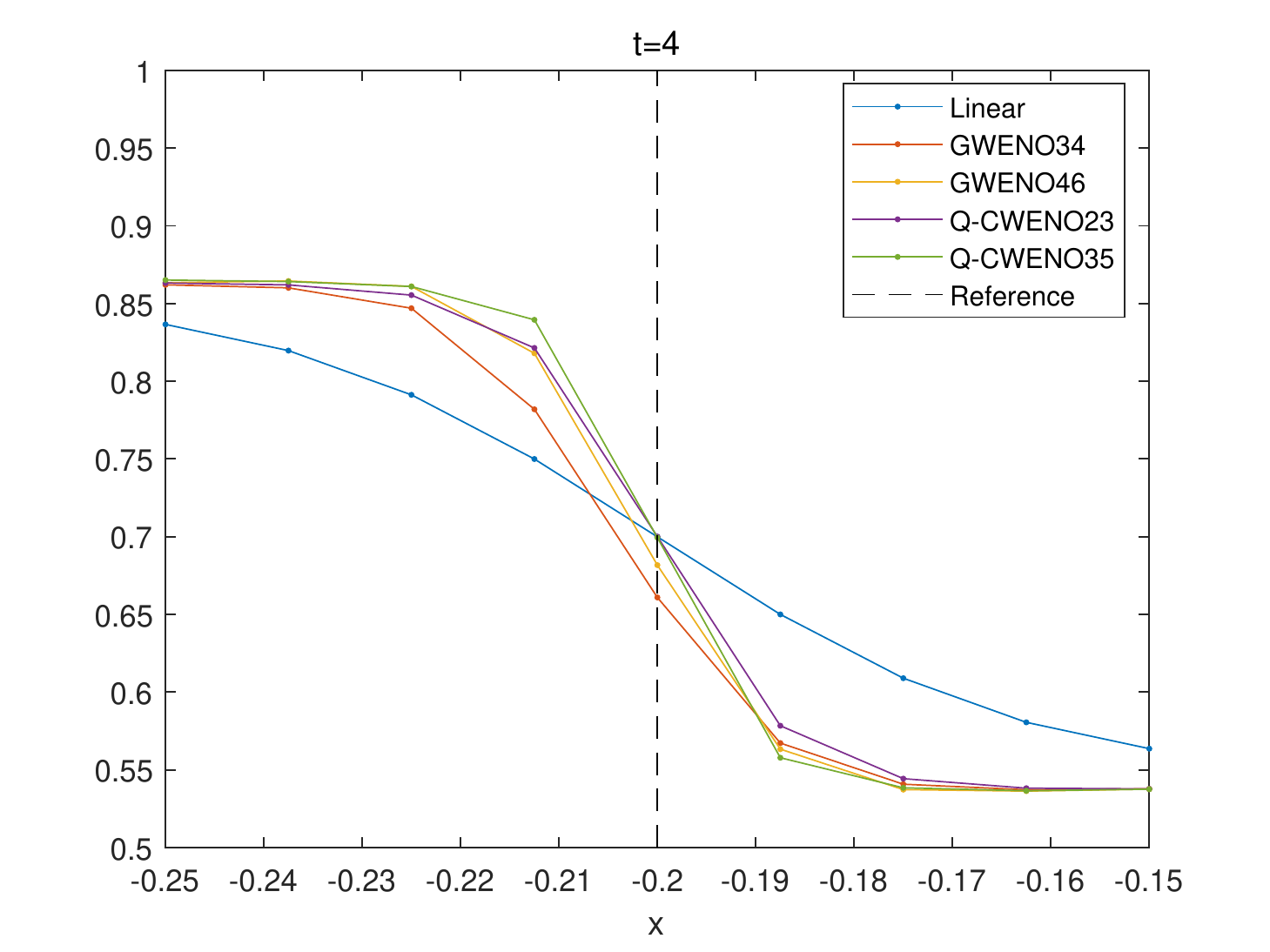}
			\subcaption{Numerical solutions w.r.t. reconstruction at $x \in[-0.25,-0.15]$}
		\end{subfigure}	
		\begin{subfigure}[b]{0.45\linewidth}
			\includegraphics[width=1\linewidth]{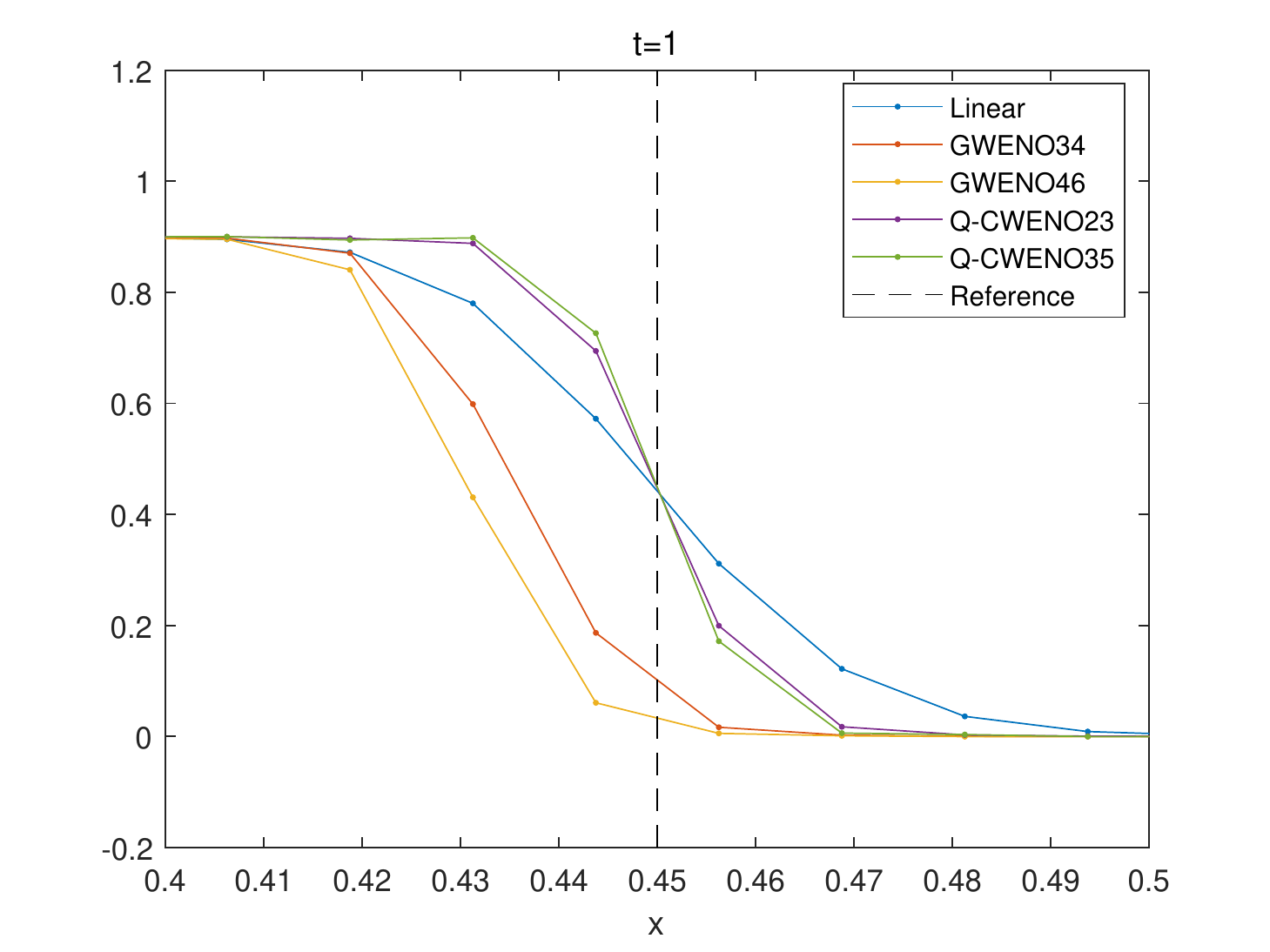}
			\subcaption{Numerical solutions w.r.t. reconstruction at $x \in[0.4,0.6]$}
		\end{subfigure}	
		%	\begin{subfigure}[b]{0.49\linewidth}
		%		\includegraphics[width=1\linewidth]{figures/rk2shocktest0405_epsT4_conerror_modify}	
		%		\subcaption{Conservation errors w.r.t. time}
		%	\end{subfigure}	
		\begin{subfigure}[b]{0.45\linewidth}
			\includegraphics[width=1\linewidth]{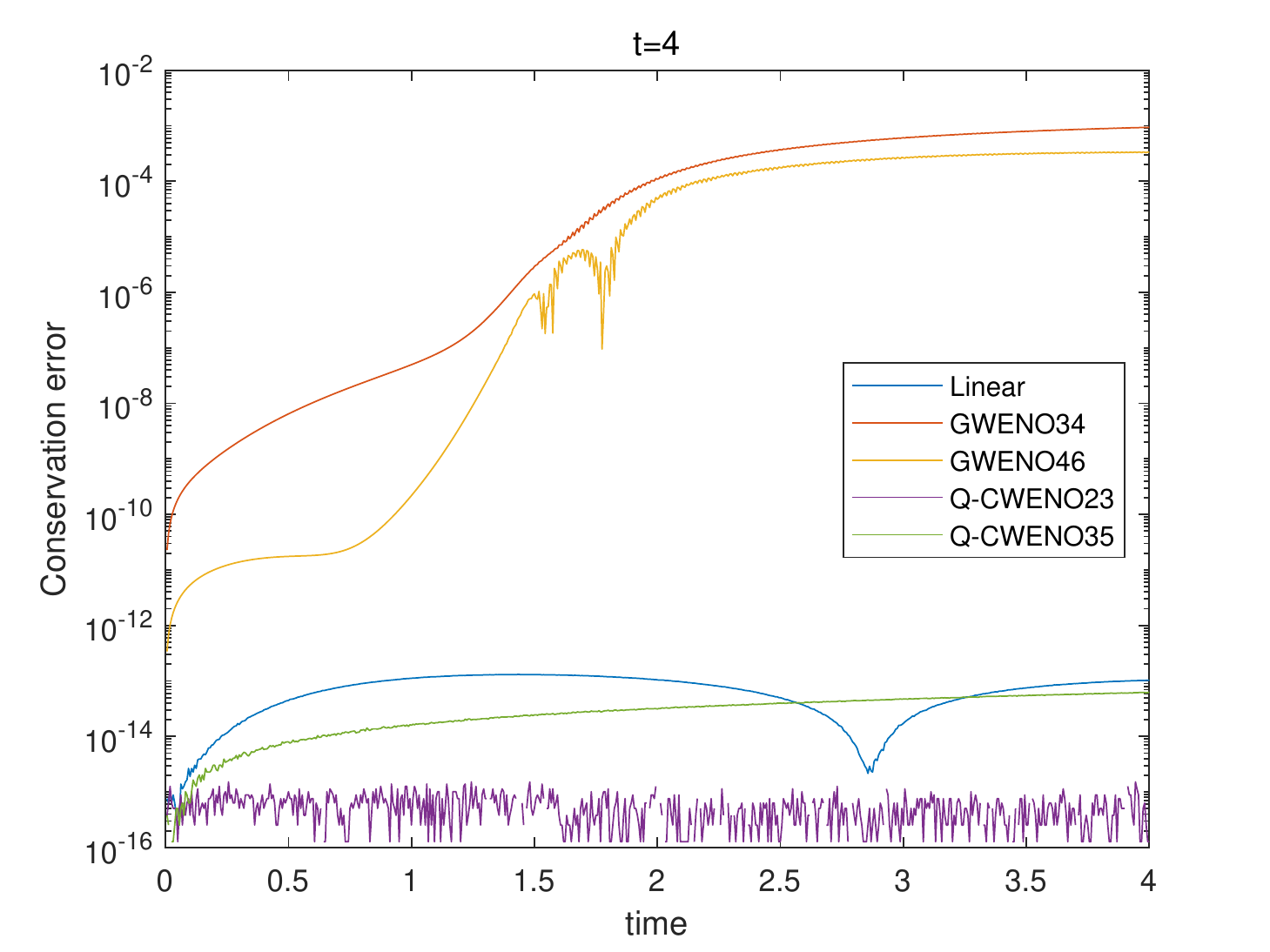}	
			\subcaption{Conservation errors w.r.t. time}
		\end{subfigure}	
		\begin{subfigure}[b]{0.45\linewidth}
			\includegraphics[width=1\linewidth]{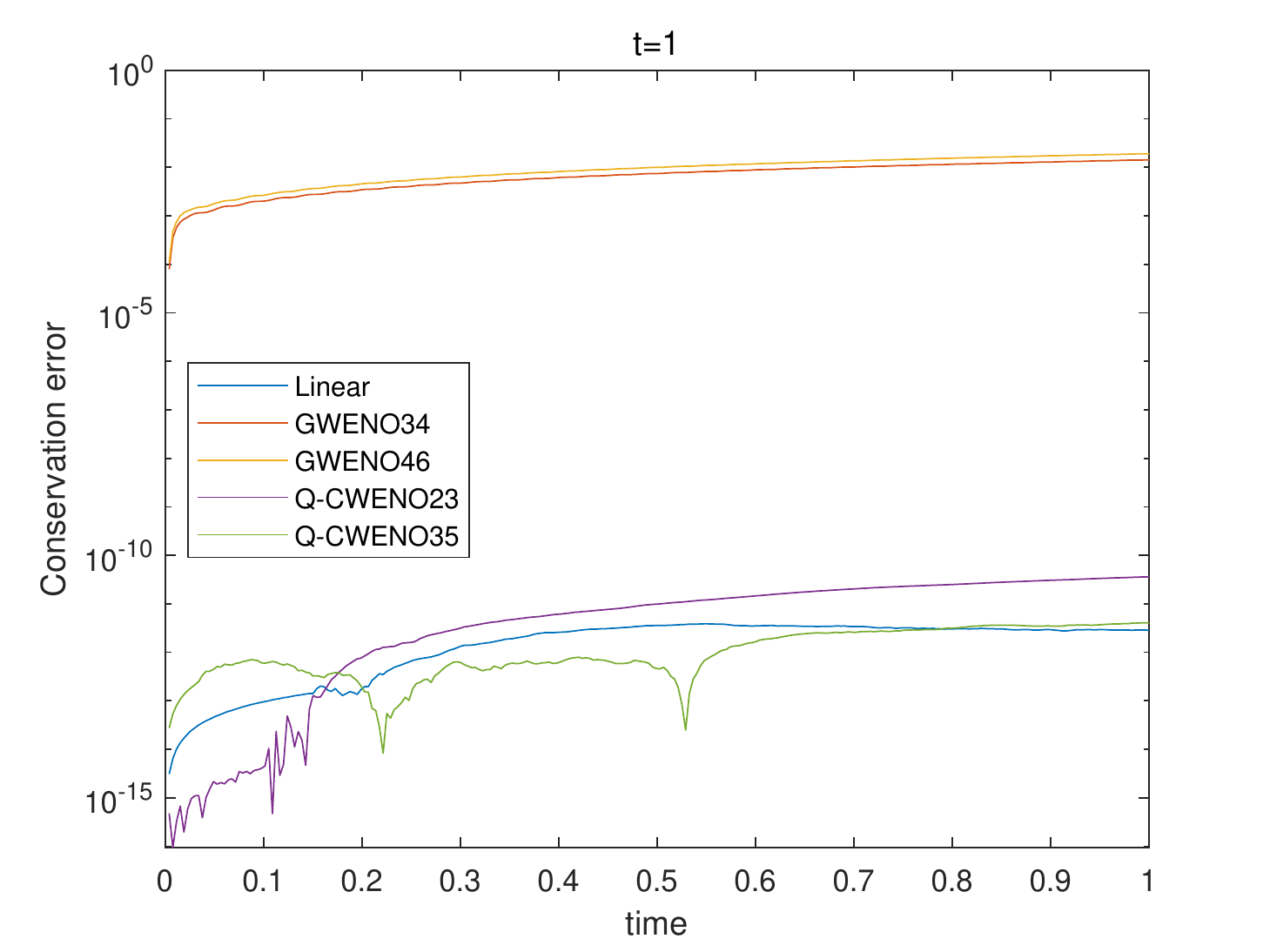}	
			\subcaption{Conservation errors w.r.t. time}
		\end{subfigure}	
		
		\caption{Shock tests for 1D Xin-Jin model. Left: initial data \eqref{smooth initial xinjin 1d} with CFL$=0.5$ Right: initial data \eqref{xin jin shock step initial} with CFL$=0.3$. The results are obtained by DIRK43 based SL methods for $\kappa=10^{-8}$ with various reconstructions.}\label{fig 3}  	
	\end{figure}
	%%%%%%%%%%%%%%%%%%%%%%%%%%%%%%%%%%%%%%%%%%%%%%%%%%%%%%%%%%%%%%%%%%%%%%%%%%%%%%%%%%%%%%%%%%%%%%%%%%%%%%%%%%%%%%%%%%%%%%%%%%%%%%%%%%%%%%%%%%%%%%%%%%%%%%%%%%%%%%%%%%%%%%%%%%%%%%%%%%%%%%%%%%%%%%%%%%%%%%%%%%%%%%%%%%%%%%%%%%%%%%%%%%%%%%%%%%%%%%%%%%%%%%%%%%%%%%%%%%%%%%%%%%%%%%%%%%%%%%%%%%%%%%%%%%%%%%%%%%%%%%%%%%%%%%%%%%%%%%%%%%%%%%%%%%%%%%%%%%%%%%%%%%%%%%%%%%%%%%%%%%%%%%%%%%%%%%%%%%%%%%%%%%%%%%%%%%%%%%%%%%%%%%%%
	{\bf $\bullet$ Discontinuous initial data.}
	In this test, we again solve the system \eqref{relaxation system} with initial data
	\begin{align}\label{xin jin shock step initial}
	u_0(x)=\begin{cases}
	0.9, \quad x \leq 0\\
	0, \quad x > 0
	\end{cases}, \quad v_0(x)= \frac{u_0^2(x)}{2}
	\end{align}
	under freeflow boundary condition on $ x \in [-1, 1]$ with grid points $N_x=160$ up to final time $T^f=1$. In this test, we compute the conservation error using
	\[
	E_{con}^n:= \frac{\sum_i u_i^{n} \Delta x - (\sum_i u_i^{0}\Delta x + s t^n)}{\sum_i u_i^{0} \Delta x},
	\]
	where $s$ is the speed of shock, which is given by $s=0.45$. 
	
	We show our reconstruction can be more effective in capturing shock position. In Fig. \ref{fig 3}, we again compare the numerical solutions for different reconstructions. As in the previous shock test, our reconstruction and linear interpolation show better performance in capturing shock position compared to GWENO34 and GWENO46 reconstructions.
	
	%Unfortunately, we observe that our scheme doesn't seem to have the Total variation diminishing (TVD) property in that oscillation appears near the discontinuity for $CFL>0.4$.

	\begin{remark}\label{rmk 5.1}
		\begin{figure}[htbp]
			\centering		
			\begin{subfigure}[b]{0.45\linewidth}
				\includegraphics[width=1\linewidth]{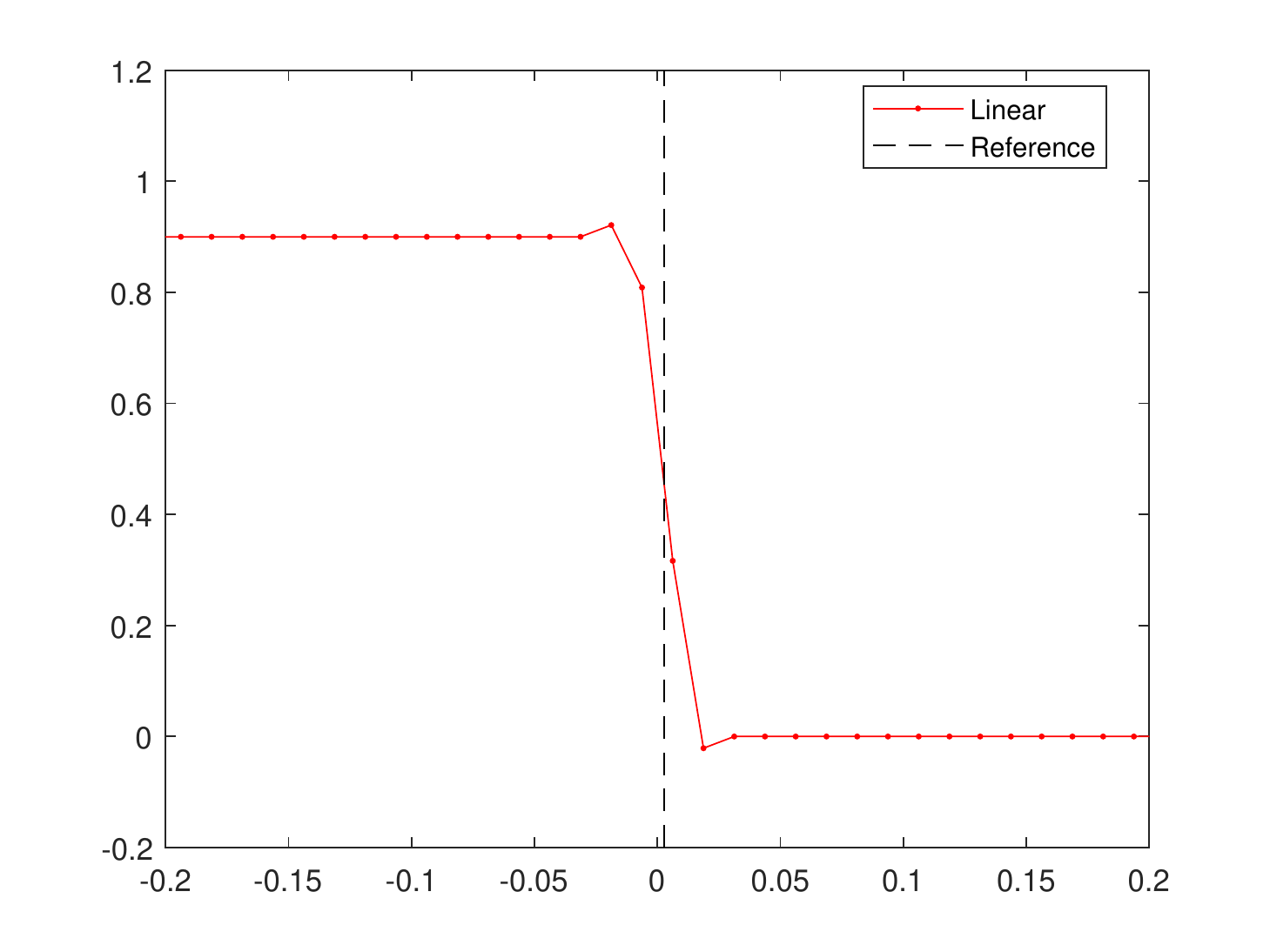}
			\end{subfigure}	
			\caption{Shock test associated to Remark \ref{rmk 5.1}. For $\kappa=10^{-8}$, DIRK2 based SL scheme is implemented with linear interpolation. Note that oscillation appears at $T^f=\Delta t=0.00625$. 
				%			Here we take CFL$=0.5$ $\kappa=10^{-8}$.
			}\label{fig oscillation}  	
		\end{figure}
		%%%%%%%%%%%%%%%%%%%%%%%%%%%%%%%%%%%%%%%%%%%%%%%%%%%%%%%%%%%%%%%%%%%%%%%%%%%%%%%%%%%%%%%%%%%%%%%%%%%%%%%%%%%%%%%%%%%%%%%%%%%%%%%%%%%%%%%%%%%%%%%%%%%%%%%%%%%%%%%%%%%%%%%%%%%%%%%%%%%%%%%%%%%%%%%%%%%%%%%%%
		In the section \ref{1d xinjin test}, we confirmed that high-order DIRK based SL schemes of Xin-Jin model works for all ranges of relaxation parameters. We also observed that, in the limit $\kappa \rightarrow 0$, oscillations appear near discontinuities for all high-order RK and BDF based SL schemes.
		%	Only RK1 based SL schemes prevent such oscillations with any kind of reconstructions. 
		To understand this phenomena, as a simple case, consider $F(u)=bu$ for $|b|<1$. We will show that oscillation appears even after one step $t=t^1$ for arbitrary second order DIRK based SL schemes with linear interpolation (see Fig \ref{fig oscillation}). We use the Butcher's table given by 
		\begin{align*}
		\begin{array}{c|c c}
		\alpha_1 & \alpha_1 & 0 \\
		1 & 1-\alpha_2 & \alpha_2 \\
		\hline
		& 1-\alpha_2 & \alpha_2
		\end{array}, \quad \alpha_2= \frac{\frac{1}{2}-\alpha_1}{1-\alpha_1}.
		\end{align*}	
		Then, with the initial conditions \eqref{xin jin shock step initial}, the following calculation verifies our remark.
		$\bullet$ Assume CFL$=\frac{\Delta t}{\Delta x}\leq 1$, and $(u_{i-2}^0,u_{i-1}^0,u_i^0,u_{i+1}^0,u_{i+2}^0)=(0.9,0.9,0.9,0.9,0)$. Then, in the limit $\kappa \rightarrow 0$, we have 
		\begin{align*}
		u_i^1&= \left(1-\frac{\Delta t}{\Delta x}\right)u_{i}^0 + \frac{\Delta t}{2\Delta x}(u_{i-1}^0+u_{i+1}^0)- \frac{b\Delta t}{2\Delta x}(u_{i-1}^0-u_{i+1}^0)+\frac{(b^2-1)(\Delta t)^2}{8(\Delta x)^2} (u_{i-2}^0 -2u_{i}^0 + u_{i+2}^0)\cr
		&= 0.9 \left(1+ \frac{(1-b^2)(\Delta t)^2}{8(\Delta x)^2}\right)>0.9,
		\end{align*}
		for any $\alpha_1 \ne 0,1$.
	\end{remark}

	\subsection{1D Broadwell model}
	Now, we move on to the semi-Lagrangian schemes for 1D Broadwell model \eqref{Broadwell}.
	\subsubsection{Accuracy test}
	To check the accuracy of the proposed schemes, we consider well-prepared data \cite{pareschi2005implicit}:
	\begin{align}\label{initial acc 1d broadwell}
	\begin{split}
	\rho_0(x)&= 1+ a_\rho \sin{\frac{2\pi}{L} x}, \quad u_0(x)= \frac{1}{2}+ a_u \sin{\frac{2\pi}{L} x}, \cr
	z_0(x)&=z_E(\rho_0(x),u_0(x)) + \kappa z_1(\rho_0(x),u_0(x))
	\end{split}
	\end{align}
	where $a_\rho=0.3$, $a_u=0.1$, $L=20$, $T^f=30$, and 
	\begin{align*}
	z_E(\rho_0,m_0)&= \frac{1}{2\rho_0}\left(\rho_0^2 + m_0^2\right), \quad 
	z_1(\rho_0,m_0)=-\frac{H(\rho_0,m_0)}{\rho_0},\cr
	H(\rho_0,m_0)&= \left(1-\partial_\rho z_E +(\partial_m z_E)^2\right)\partial_x m_0 + (\partial_\rho z_E \partial_m z_E)\partial_x\rho_0.
	\end{align*}
	The periodic condition is imposed on $[-20,20]$ upto final time $T^f=30$.
	We take different CFL numbers less than $1$. The order of convergence is based on the grid points $N_x=160,320,640$. Here the desired accuracy for DIRK2 is between 2 and 3, while for DIRK43, it is between 3 and 5.
	
	In Fig. \ref{fig Broadwell accuracy2}, one can see that the DIRK2 based method attains the desired accuracy for all ranges of $\kappa$. On the other hand, in the limit $\kappa \rightarrow 0$, the DIRK43 based method shows order reduction, which could be prevented by adopting the BDF3 based method. For small CFL numbers, space errors dominate so the order of accuracy comes from spatial reconstruction, while for large CFL time discretization errors dominate so the order of accuracy comes from time integration.  
	
	\begin{figure}[!]
		\centering
		\begin{subfigure}[b]{0.45\linewidth}
			\includegraphics[width=1\linewidth]{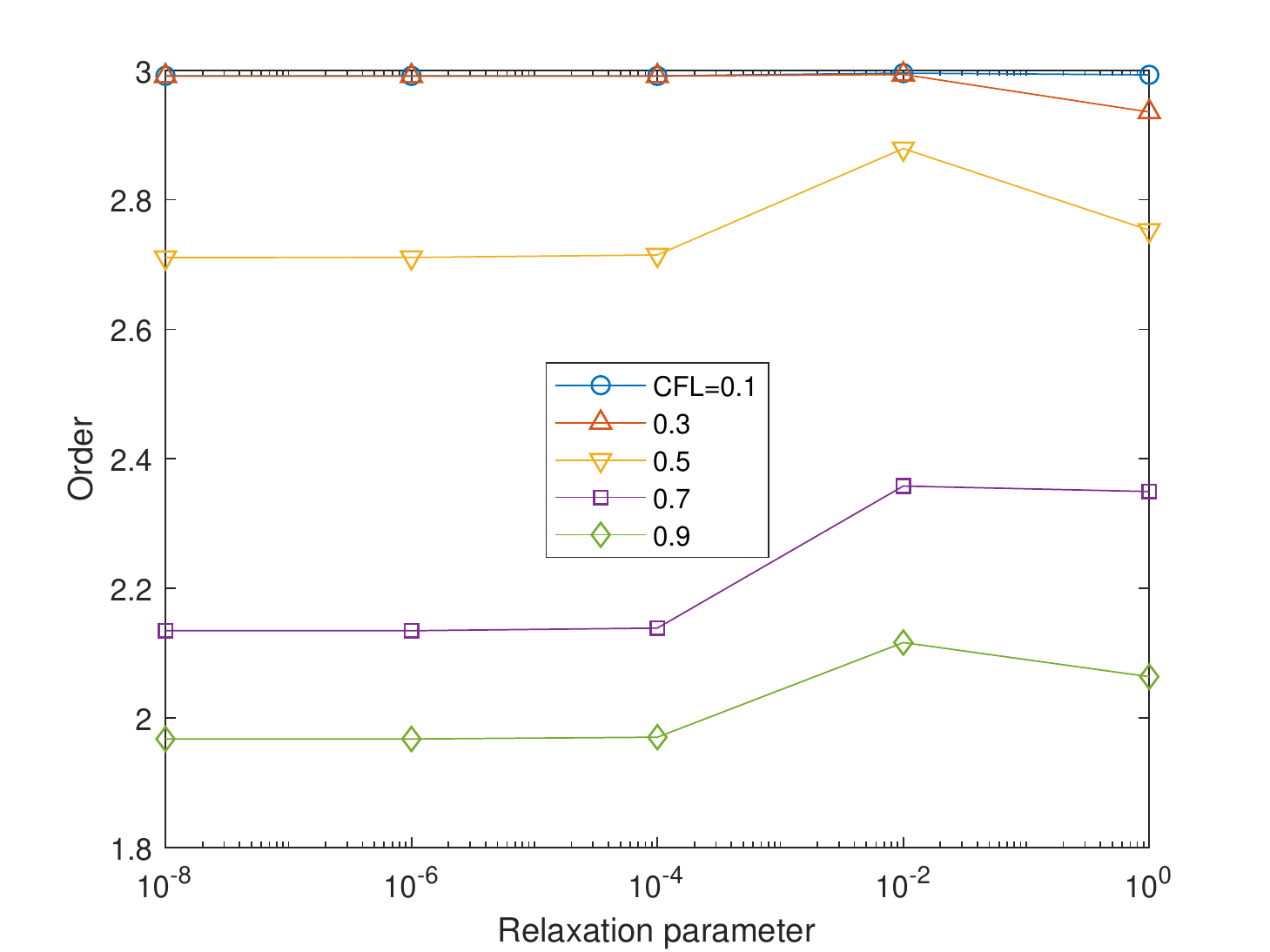}
			\subcaption{DIRK2, Q-CWENO23}
		\end{subfigure}		
		\begin{subfigure}[b]{0.45\linewidth}
			\includegraphics[width=1\linewidth]{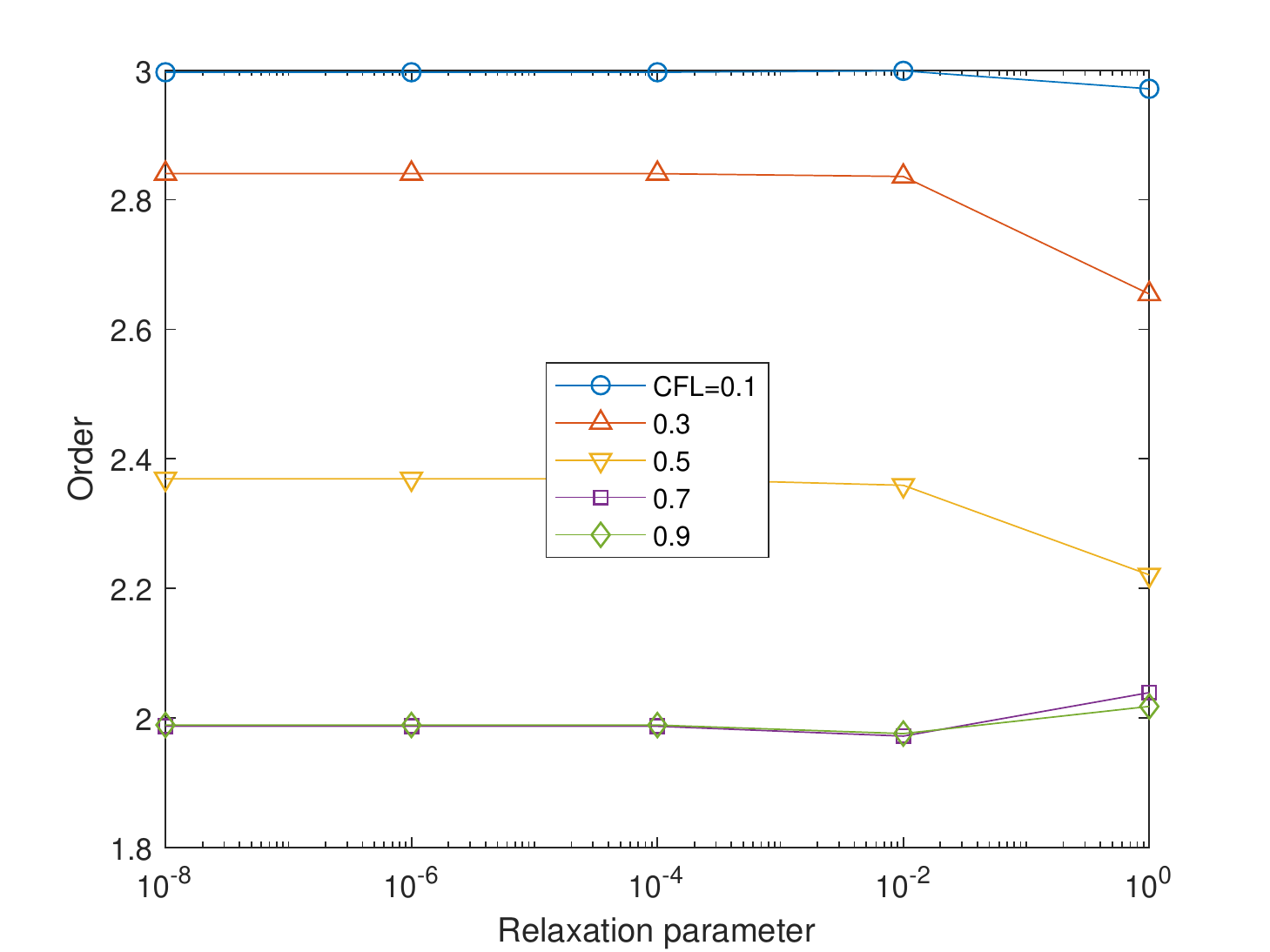}
			\subcaption{BDF2, Q-CWENO23}
		\end{subfigure}		
		\begin{subfigure}[b]{0.45\linewidth}
			\includegraphics[width=1\linewidth]{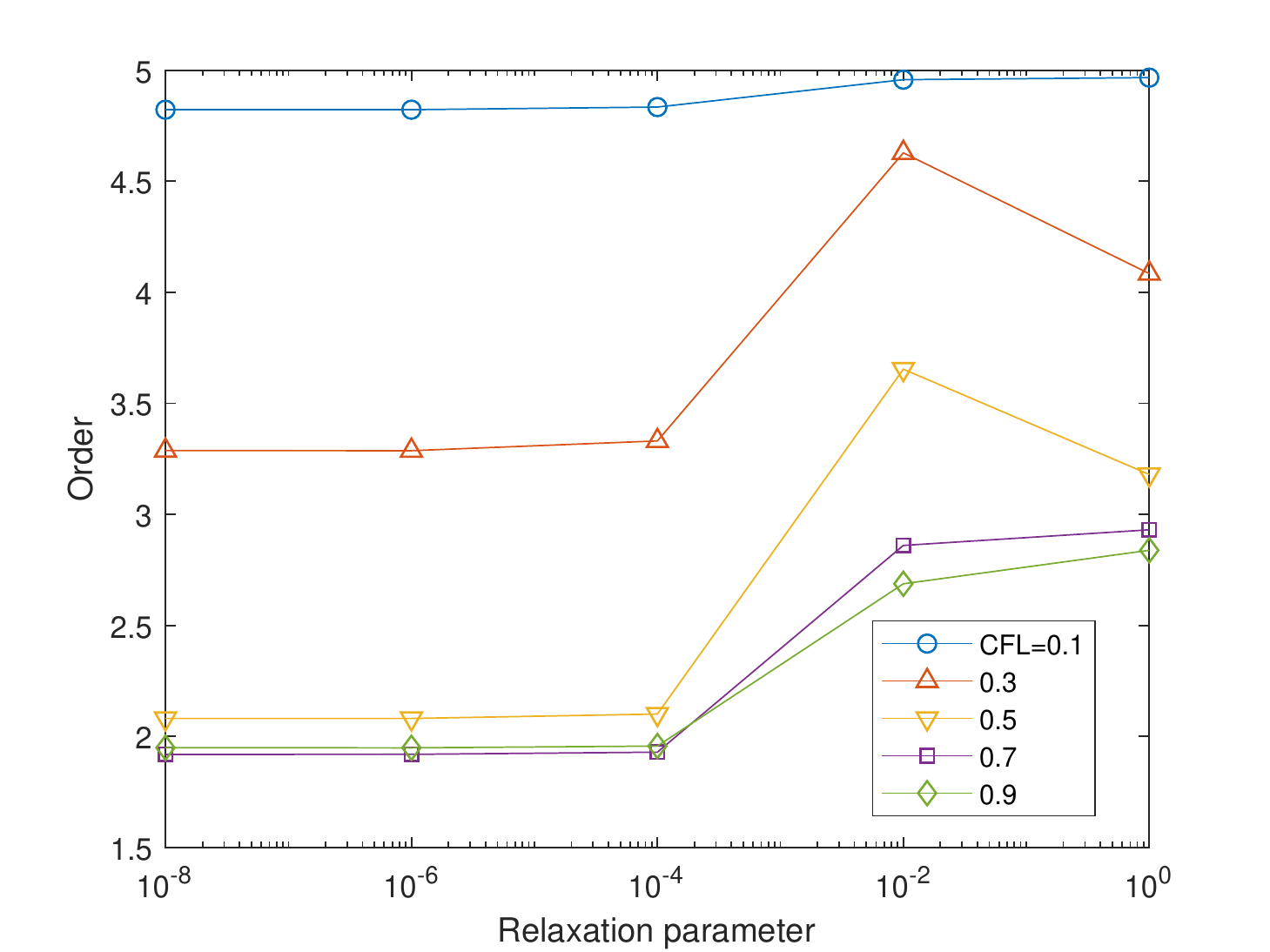}
			\subcaption{DIRK43, Q-CWENO35}
		\end{subfigure}		
		\begin{subfigure}[b]{0.45\linewidth}
			\includegraphics[width=1\linewidth]{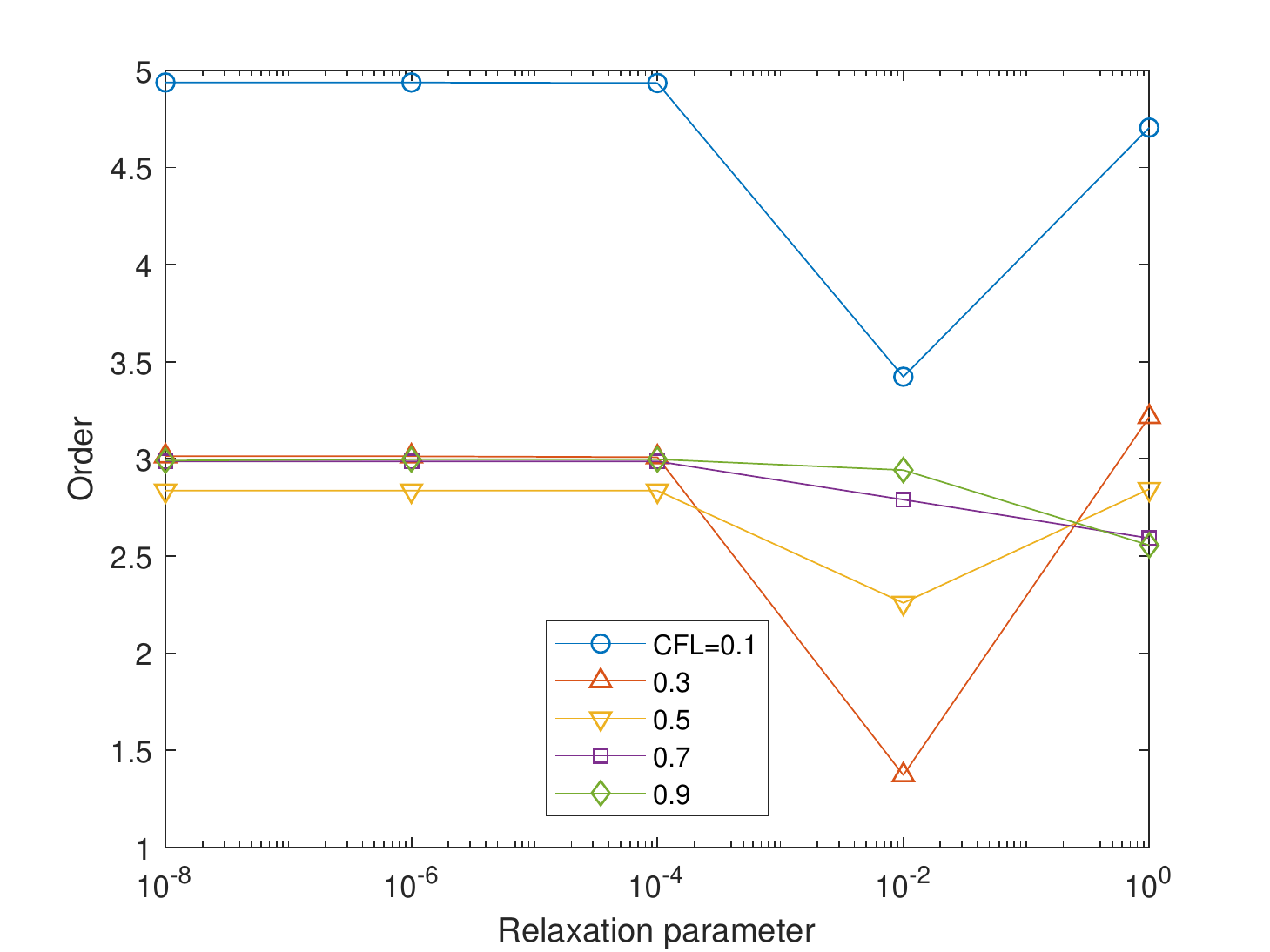}
			\subcaption{BDF3, Q-CWENO35}
		\end{subfigure}				
		\begin{subfigure}[b]{0.45\linewidth}
			\includegraphics[width=1\linewidth]{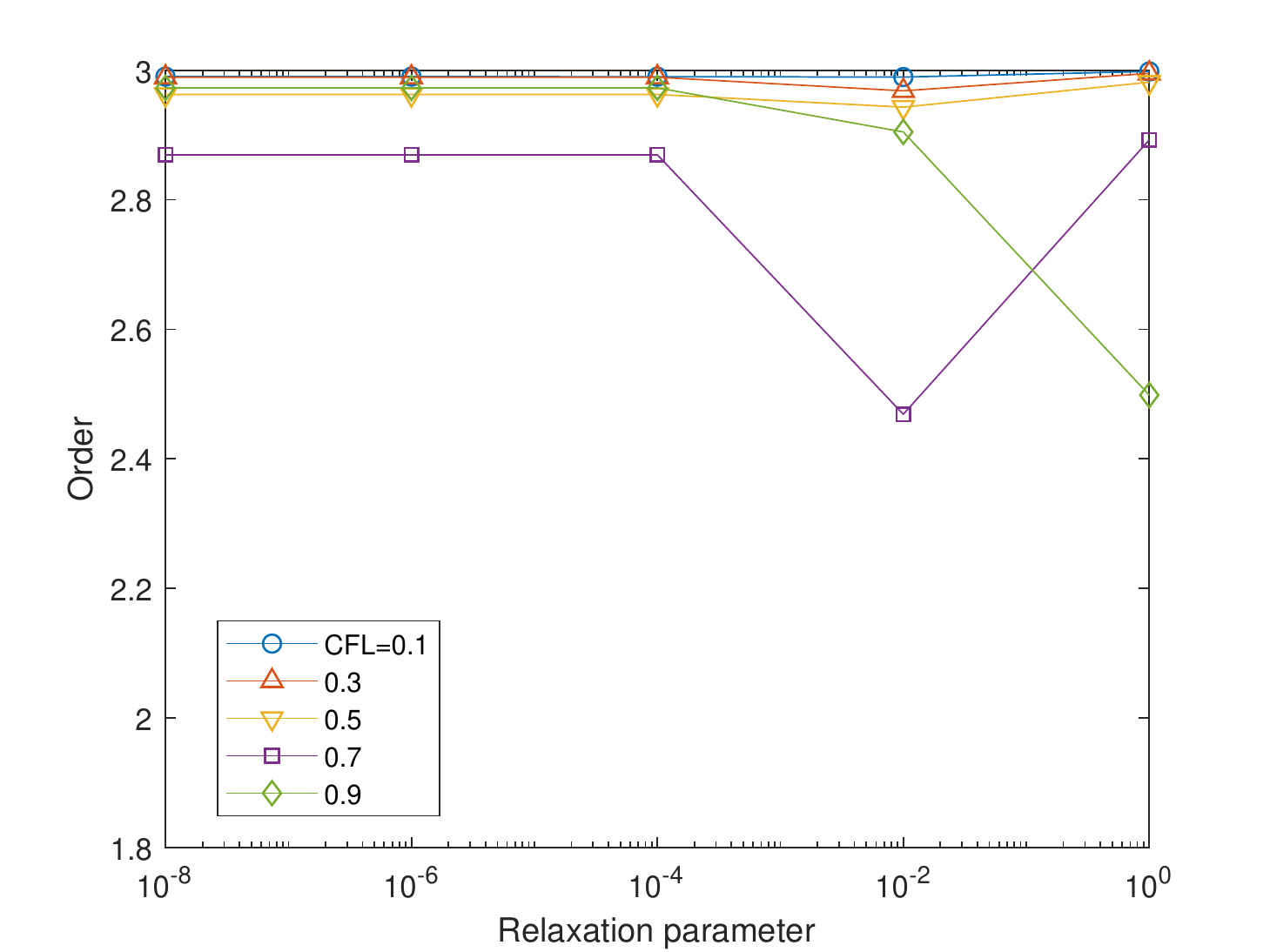}
			\subcaption{BDF3, Q-CWENO23}
		\end{subfigure}				
		
		%		\begin{subfigure}[b]{0.49\linewidth}
		%			\includegraphics[width=1\linewidth]{figures/BDF3RK43ordertest_CWENO23_hweight}
		%			\subcaption{BDF3DIRK43, Q-CWENO23, Non-linear weights}
		%		\end{subfigure}		
		\caption{Accuracy tests for 1D Broadwell model. Initial data is associated to \eqref{initial acc 1d broadwell}. $x$-axis is for the relaxation parameter $\kappa$ and $y$-axis is for order of accuracy based on $N_x=160,320,640$.}\label{fig Broadwell accuracy2}  	
	\end{figure}
	
	%\begin{figure}[!]
	%	\centering
	%	\begin{subfigure}[b]{0.49\linewidth}
	%		\includegraphics[width=1\linewidth]{figures/BDF3RK3ordertest_CWENO23_hweight}
	%		\subcaption{BDF3DIRK3, Q-CWENO23, Non-linear weights}
	%	\end{subfigure}		
	%	\begin{subfigure}[b]{0.49\linewidth}
	%		\includegraphics[width=1\linewidth]{figures/BDF3RK43ordertest_CWENO23_hweight}
	%		\subcaption{BDF3DIRK43, Q-CWENO23, Non-linear weights}
	%	\end{subfigure}		
	%	\begin{subfigure}[b]{0.49\linewidth}
	%		\includegraphics[width=1\linewidth]{figures/BDF3RK3ordertest_CWENO35_hweight}
	%		\subcaption{BDF3DIRK3, Q-CWENO35, Non-linear weights}
	%	\end{subfigure}		
	%	\begin{subfigure}[b]{0.49\linewidth}
	%		\includegraphics[width=1\linewidth]{figures/BDF3RK43ordertest_CWENO35_hweight}
	%		\subcaption{BDF3DIRK43, Q-CWENO35, Non-linear weights}
	%	\end{subfigure}
	%	\begin{subfigure}[b]{0.49\linewidth}
	%		\includegraphics[width=1\linewidth]{figures/BDF3RK3ordertest_CWENO35_linearweight}
	%		\subcaption{BDF3DIRK3, Q-CWENO35, Linear weights}
	%	\end{subfigure}		
	%	\begin{subfigure}[b]{0.49\linewidth}
	%		\includegraphics[width=1\linewidth]{figures/BDF3RK43ordertest_CWENO35_linearweight}
	%		\subcaption{BDF3DIRK43, Q-CWENO35, Linear weights}
	%	\end{subfigure}				
	%	
	%	\caption{ 1D Broadwell model DIRK3 and DIRK43. Accuracy tests. $x$ axis is for $\kappa$ and $y$ axis is for order. $N_x=640,1280,2560$}\label{fig Broadwell accuracy3}  	
	%\end{figure}
	
	\subsection{Shock tests}
	We consider the following two cases in \cite{CJR}:
	\begin{align}\label{shock cases}
	\begin{split}
	\text{Case 1.}\, (\rho,\, m,\, z)=
	&\begin{cases}
	(2,1,1) \quad x<0.2 \\ 
	(1,0.13962,1) \quad x>0.2 \\ 
	\end{cases}, \, x\in [-1,1],\, T^f=0.25,\, \kappa= 1,\\
	\text{Case 2.}\, (\rho,\, m,\, z)=		
	&\begin{cases}
	(1,0,1) \quad x<0.5 \\ 
	(0.2,0,1) \quad x>0.5  
	\\ 
	\end{cases}, \, x\in [0,1],\, T^f=0.25,\, \kappa= 10^{-8}.
	\end{split}
	\end{align}
	For each case, we take $N_x=200$. In Fig. \ref{fig Broadwell case1 shock} we observe that the proposed schemes allows large CFL$>1$ with the choice of $\kappa=1$. In case of $\kappa=10^{-8}$, some oscillations appear near the discontinuity for CFL$> 0.8$. For CFL$\leq 0.8$, we obtain solutions which reproduce the numerical results in \cite{CJR}.
	\begin{figure}[htbp]
		\centering
		\begin{subfigure}[b]{0.45\linewidth}
			\includegraphics[width=1\linewidth]{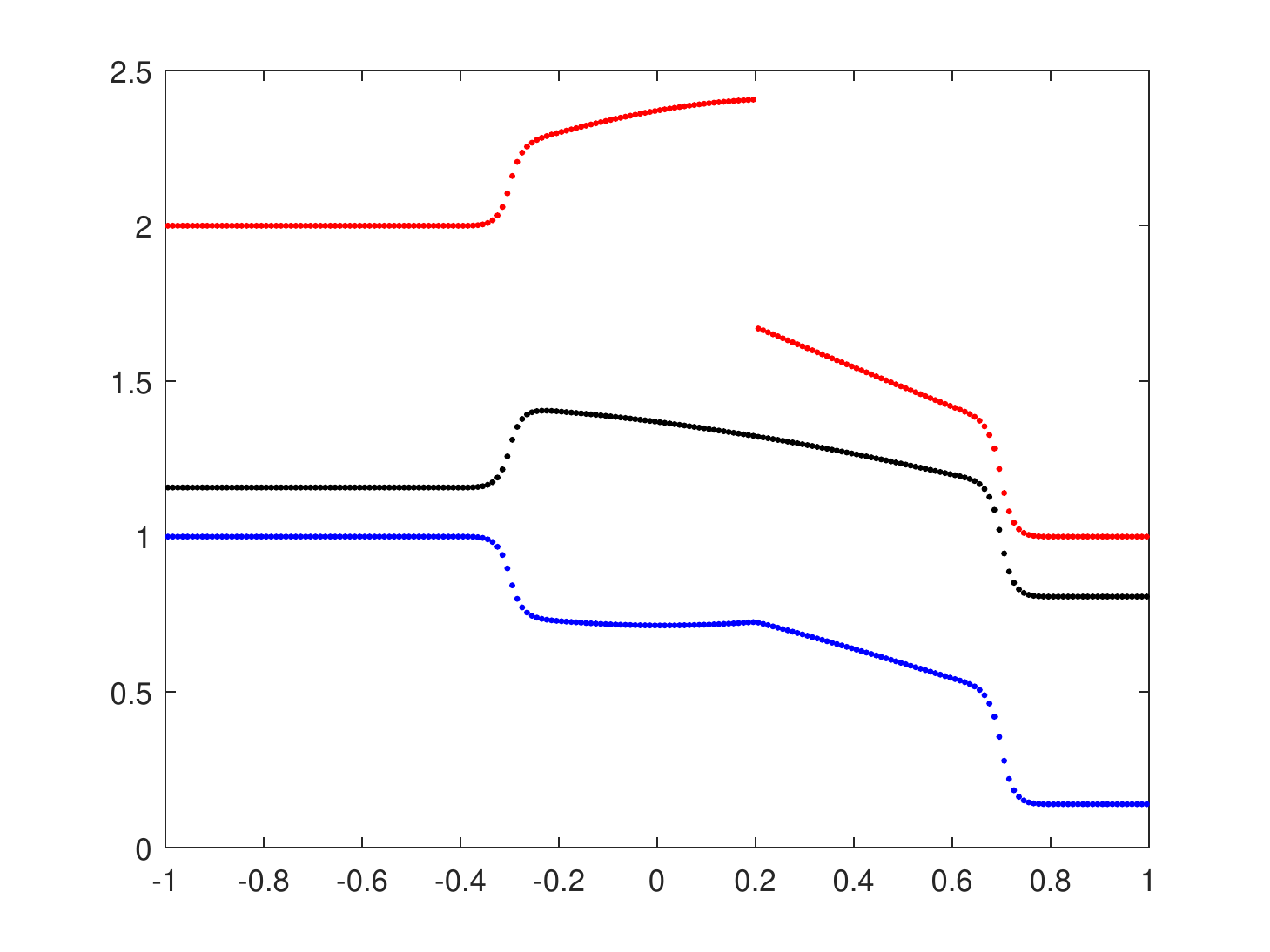}
			\subcaption{DIRK2, Q-CWENO23, CFL$=0.5$}
		\end{subfigure}	
		\begin{subfigure}[b]{0.45\linewidth}
			\includegraphics[width=1\linewidth]{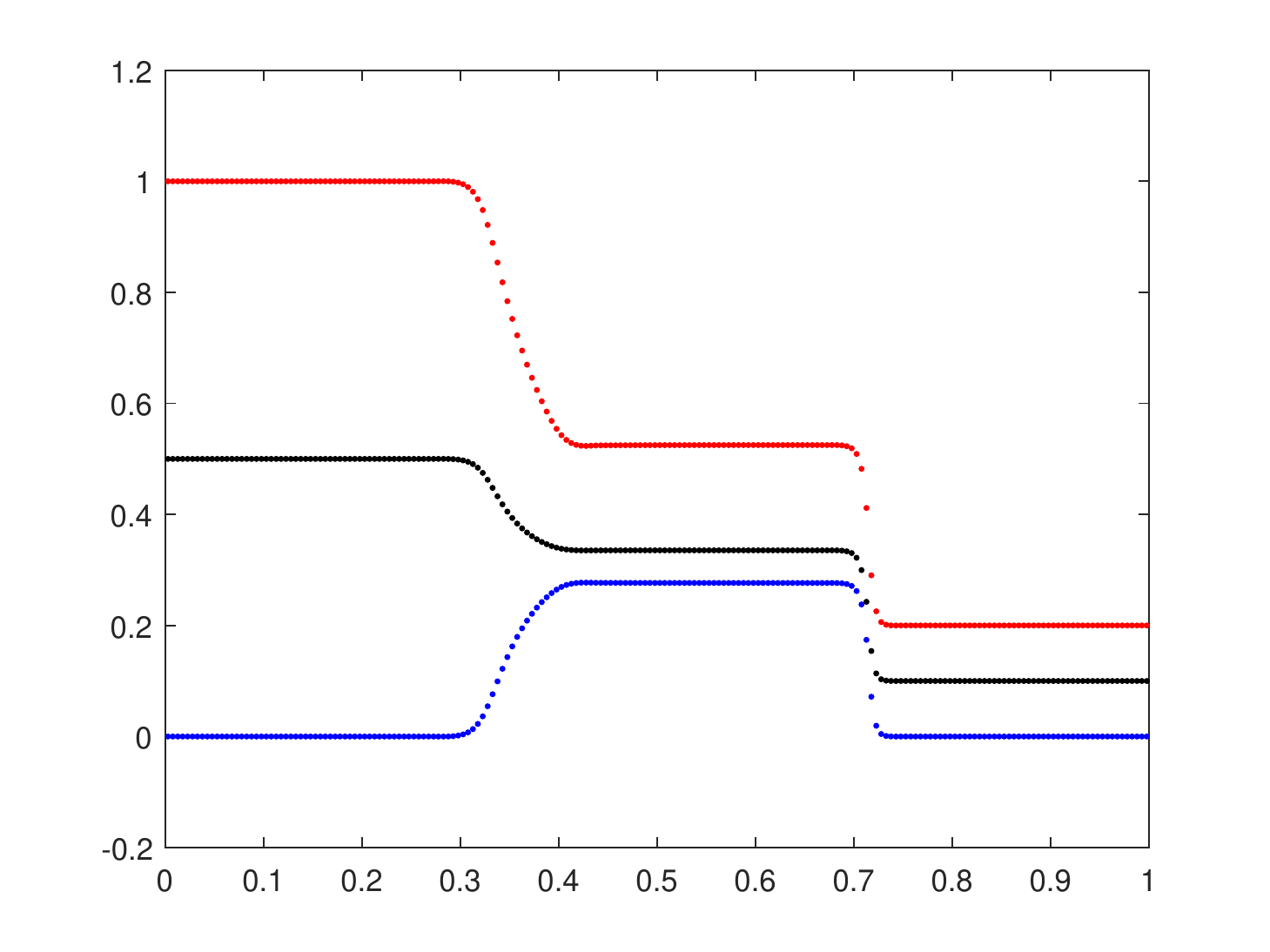}
			\subcaption{DIRK2, Q-CWENO23, CFL$=0.5$}
		\end{subfigure}	
		%	\begin{subfigure}[b]{0.49\linewidth}
		%		\includegraphics[width=1\linewidth]{figures/BDF2_case1_CWENO23_9}
		%		\subcaption{BDF2, Q-CWENO23, $CFL=0.9$}
		%	\end{subfigure}		
		\begin{subfigure}[b]{0.45\linewidth}
			\includegraphics[width=1\linewidth]{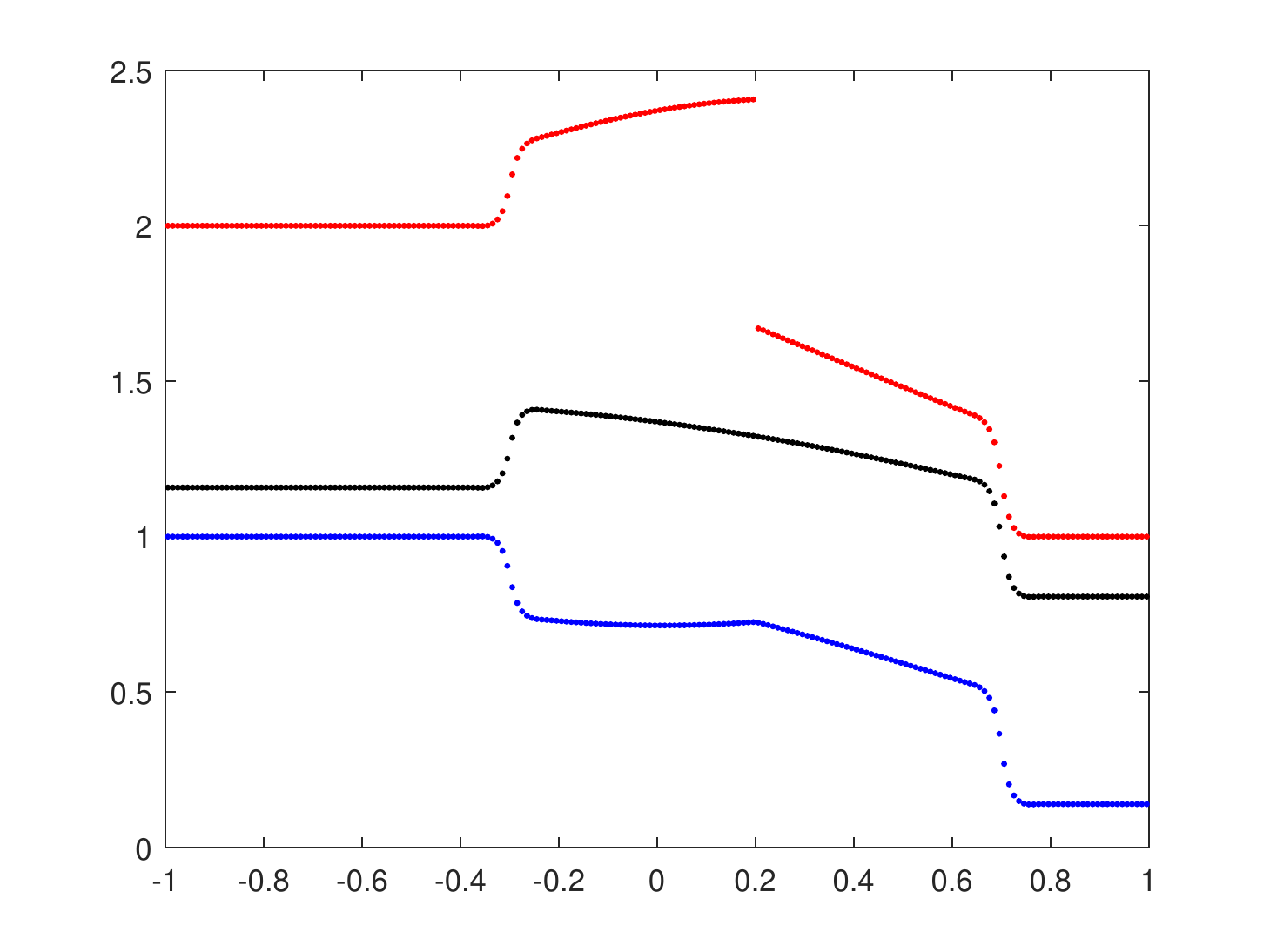}
			\subcaption{DIRK2, Q-CWENO23, CFL$=0.8$}
		\end{subfigure}	
		\begin{subfigure}[b]{0.45\linewidth}
			\includegraphics[width=1\linewidth]{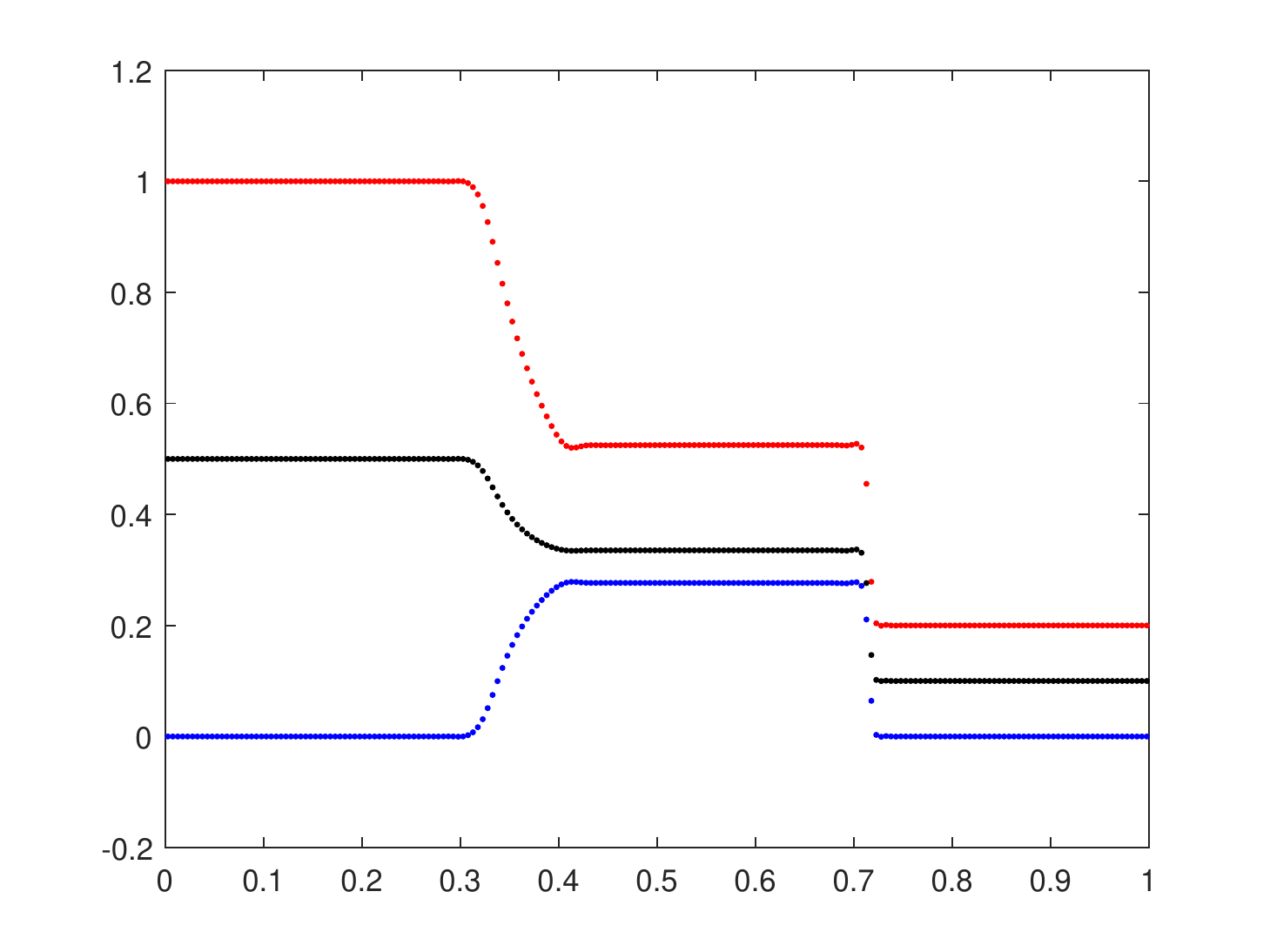}
			\subcaption{DIRK2, Q-CWENO23, CFL$=0.8$}
		\end{subfigure}	
		%	\begin{subfigure}[b]{0.49\linewidth}
		%		\includegraphics[width=1\linewidth]{figures/BDF2_case1_CWENO23_49}
		%		\subcaption{BDF2, Q-CWENO23, $CFL=4.9$}
		%	\end{subfigure}	
		\begin{subfigure}[b]{0.45\linewidth}
			\includegraphics[width=1\linewidth]{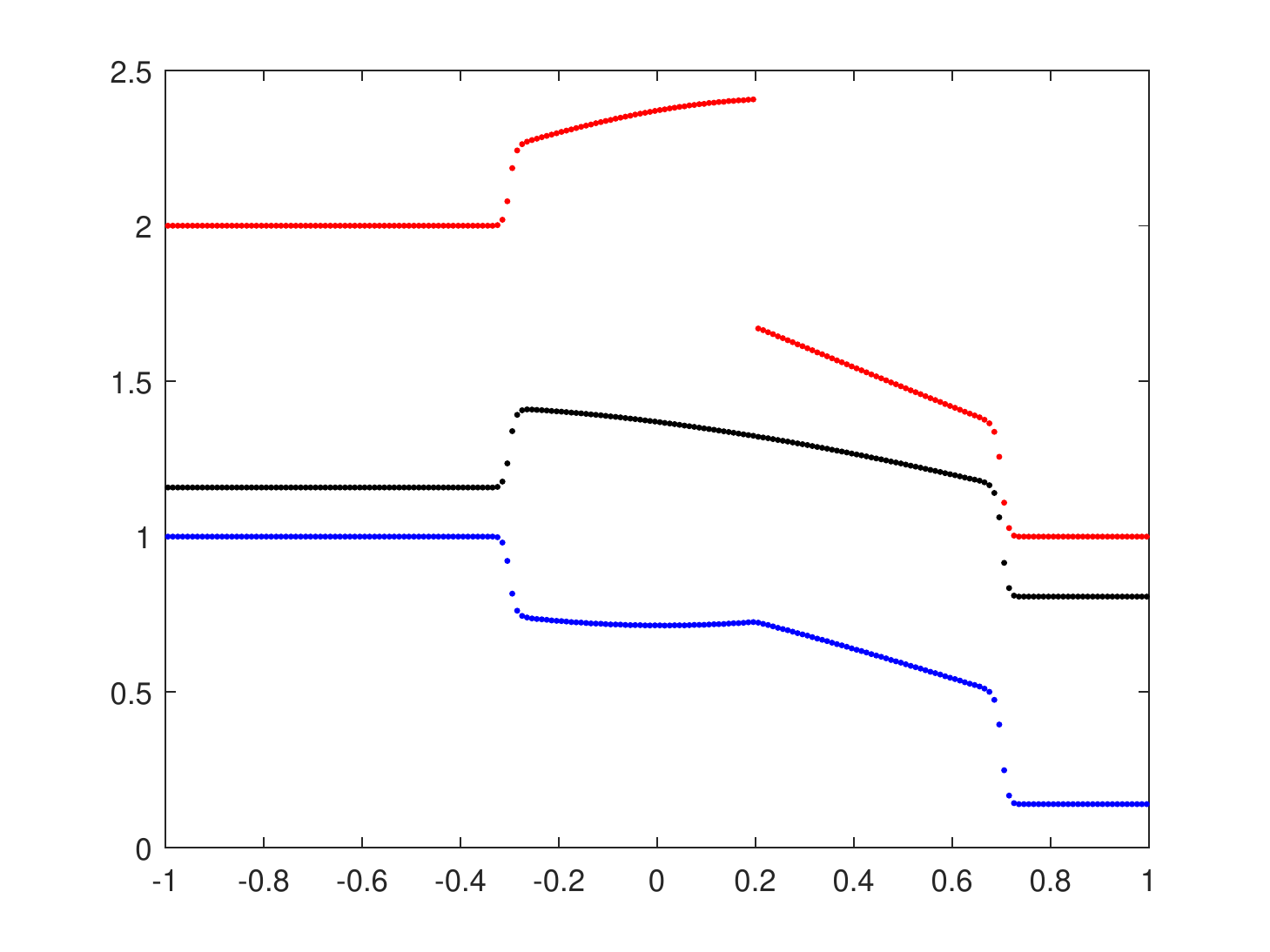}	
			\subcaption{DIRK2, Q-CWENO23, CFL$=1.9$}
		\end{subfigure}	
		\begin{subfigure}[b]{0.45\linewidth}
			\includegraphics[width=1\linewidth]{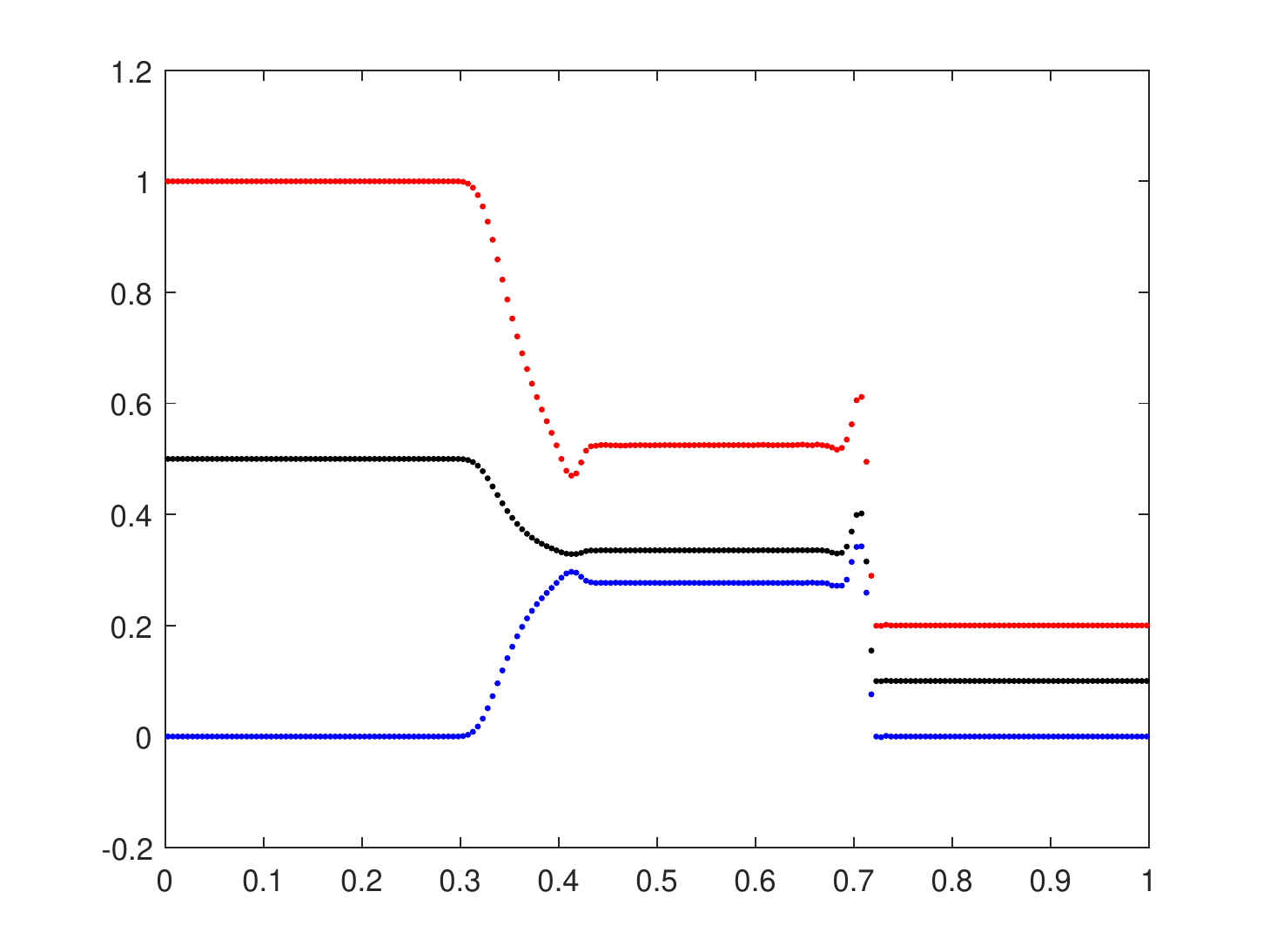}	
			\subcaption{DIRK2, Q-CWENO23, CFL$=1.9$}
		\end{subfigure}		
		%	\begin{subfigure}[b]{0.49\linewidth}
		%		\includegraphics[width=1\linewidth]{figures/BDF2_case1_CWENO23_89}	
		%		\subcaption{BDF2, Q-CWENO23, $CFL=8.9$}
		%	\end{subfigure}	
		%	
		\caption{ Shock tests for 1D Broadwell model. Macroscopic variables $\rho$ (red), $m$ (blue) and $z$ (black). Left: Case 1 in \eqref{shock cases}, Right: Case 2 in \eqref{shock cases}.}\label{fig Broadwell case1 shock}  	
	\end{figure}

	\subsection{2D simplified Xin-Jin model}
	For 2D tests, we here consider the DIRK2 based method. 
	\subsubsection{Accuracy test}
	Here, we use well-prepared initial data:
	\begin{align}\label{initial acc 2d xinjin}
	u_0(x,y)=0.8\sin^2(\pi x)\sin^2(\pi y), \quad v_0(x,y)= \frac{u_0^2(x,y)}{2} + \kappa \left( u_0^2(x,y) -1\right)   (\partial_x{u_0}(x,y) + \partial_y{u_0}(x,y)).
	\end{align}
	The computation is performed in $ (x,y) \in [0, 1]^2$ with the periodic boundary condition with $N_x=N_y$. 
	%\textcolor{red}{In this problem, 
	%	the breaking time is the positive minimum time: $\displaystyle T_b:= \inf_{x,y} \left\{-\frac{1}{\partial_x u_0(x,y) + \partial_y u_0(x,y)}\right\}$. Since $\displaystyle T_b=\frac{1}{0.6\pi\sqrt{3}} \approx 0.3063$, we take a final time as $\displaystyle T^f=0.15$.} 
	In this problem, 
	the breaking time is $\displaystyle T_b = \frac{1}{0.6\pi\sqrt{3}}\approx 0.3063$, we take a final time as $\displaystyle T^f=0.15$. Since $|u_0| < 1$, the subcharacteristic condition is satisfied. We restrict the ratio to satisfy $\frac{\Delta t}{\Delta x}\leq 1$. In Fig. \ref{fig 2 2d}, we confirm that SL schemes based on DIRK2 and BDF2 attains desired accuracy between 2 and 3 for all ranges of the relaxation parameter $\kappa$.
	
	\begin{figure}[htbp]
		\centering
		\begin{subfigure}[b]{0.45\linewidth}
			\includegraphics[width=1\linewidth]{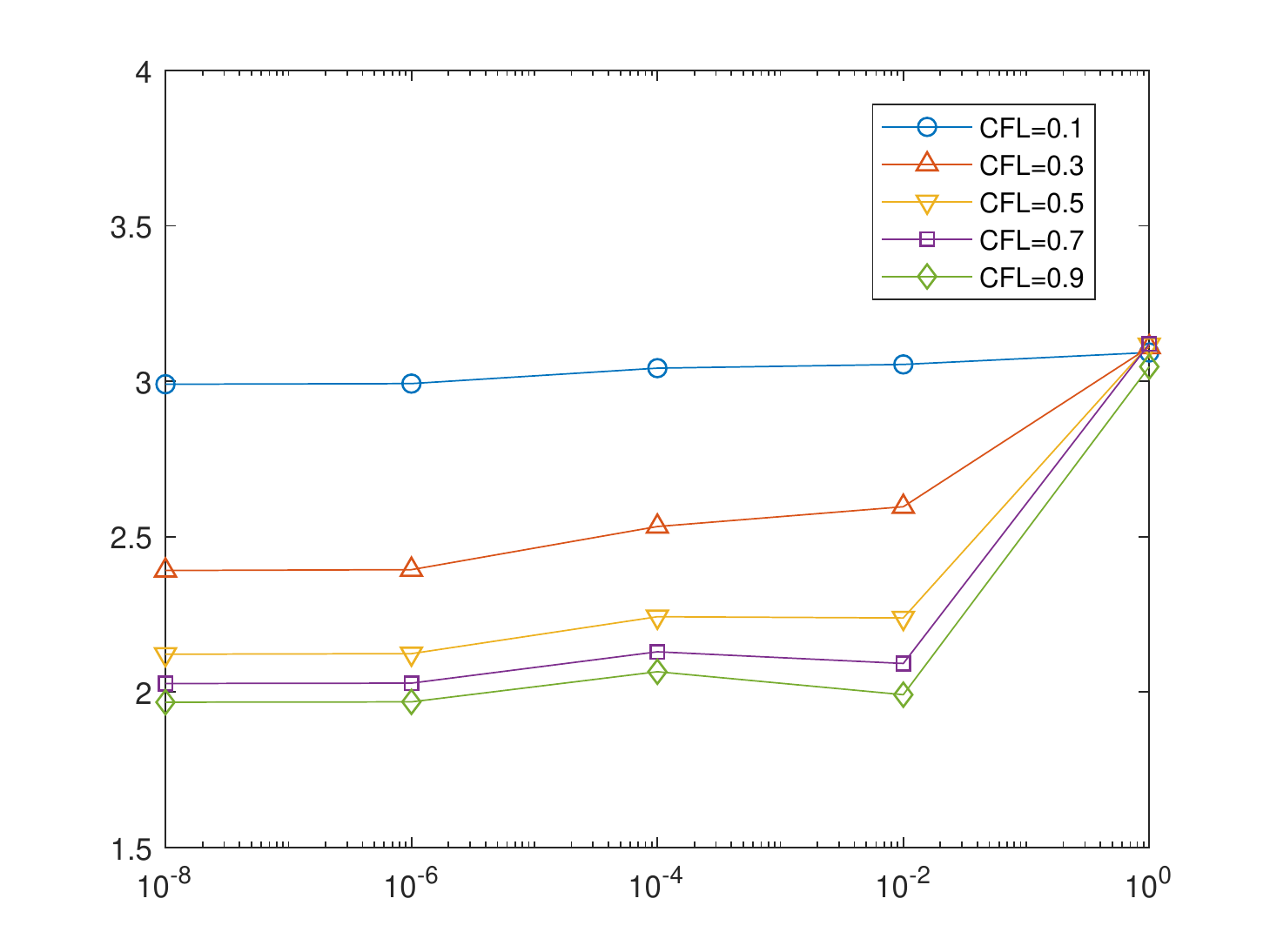}
			\subcaption{DIRK2 - Q-CWENO23}
		\end{subfigure}	
		\begin{subfigure}[b]{0.45\linewidth}
			\includegraphics[width=1\linewidth]{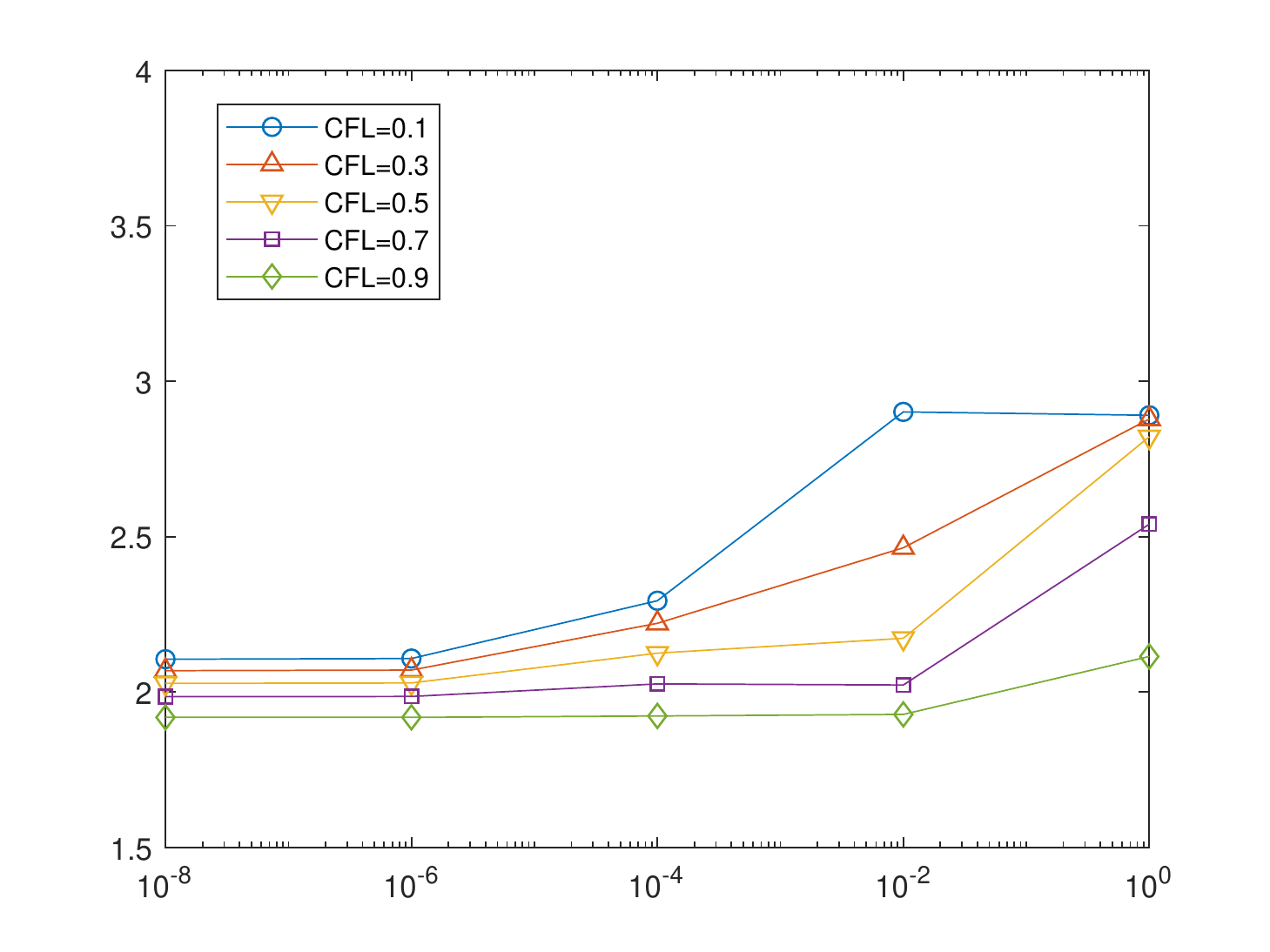}	
			\subcaption{BDF2 - Q-CWENO23}
		\end{subfigure}	
		\caption{Accuracy tests for 2D Xin-Jin model. Initial data is associated to \eqref{initial acc 2d xinjin}. $x$-axis is for the relaxation parameter $\kappa$ and $y$-axis is for order of accuracy based on $N_x^2=N_y^2=160^2, 320^2, 640^2$.}\label{fig 2 2d}  	
	\end{figure}
	
	%%%%%%%%%%%%%%%%%%%%%%%%%%%%%%%%%%%%%%%%%%%%%%%%%%%%%%%%%%%%%%%%%%%%%%%%%%%%%%%%%%%%%%%%%%%%%%%%%%%%%%%%%%%%%%%%%%%%%%%%%%%%%%%%%%%%%%%%%%%%%%%%%%%%%%%%%%%%%%%%%%%%%%%%%%%%%%%%%%%%%%%%%%%%%%%%%%%%%%%%%%%%%%%%%%%%%%%%%%%%%%%%%%%%%%%%%%%%%%%%%%%%%%%%%%%%%%%%%%%%%%%%%%%%%%%%%%%%%%%%%%%%%%%%%%%%%%%%%%%%%%%%%%%%%%%%%%%%%%%%%%%%%%%%%%%%%%%%%%%%%%%%%%%%%%%%%%%%%%%%%%%%%%%%%%%%%%%%%%%%%%%%%%%%%%%%%%%%%%%%%%%%%%%%

	\subsubsection{Shock tests}
	Now, we move on to 2D shock tests for \eqref{relaxation system}.
	\\
	{\bf $\bullet$ Smooth initial data.}
	Here, we solve the relaxation system \eqref{relaxation system} to capture the profile of the shock in Burgers equation. For this, we consider the following initial data:
	\begin{align}\label{initial shock smmoth 2d xinjin}
	u_0(x,y)=0.8\sin^2(\pi x)\sin^2(\pi y), \quad v_0(x,y)= \frac{u_0^2(x,y)}{2}.
	\end{align}
	on the periodic domain $ (x,y) \in [0, 1]^2$ with grid points $N_x=400$ and mesh ratio $\frac{\Delta t}{\Delta x}=\frac{\Delta t}{\Delta y}=0.2$. In Fig. \ref{fig 10}, results are reported for $t=1,2,3$. Here, we only present result using 2D SL methods based on DIRK2 and Q-CWENO23.\\
	\begin{figure}[htbp]
		\centering	
		\begin{subfigure}[b]{0.45\linewidth}
			\includegraphics[width=1\linewidth]{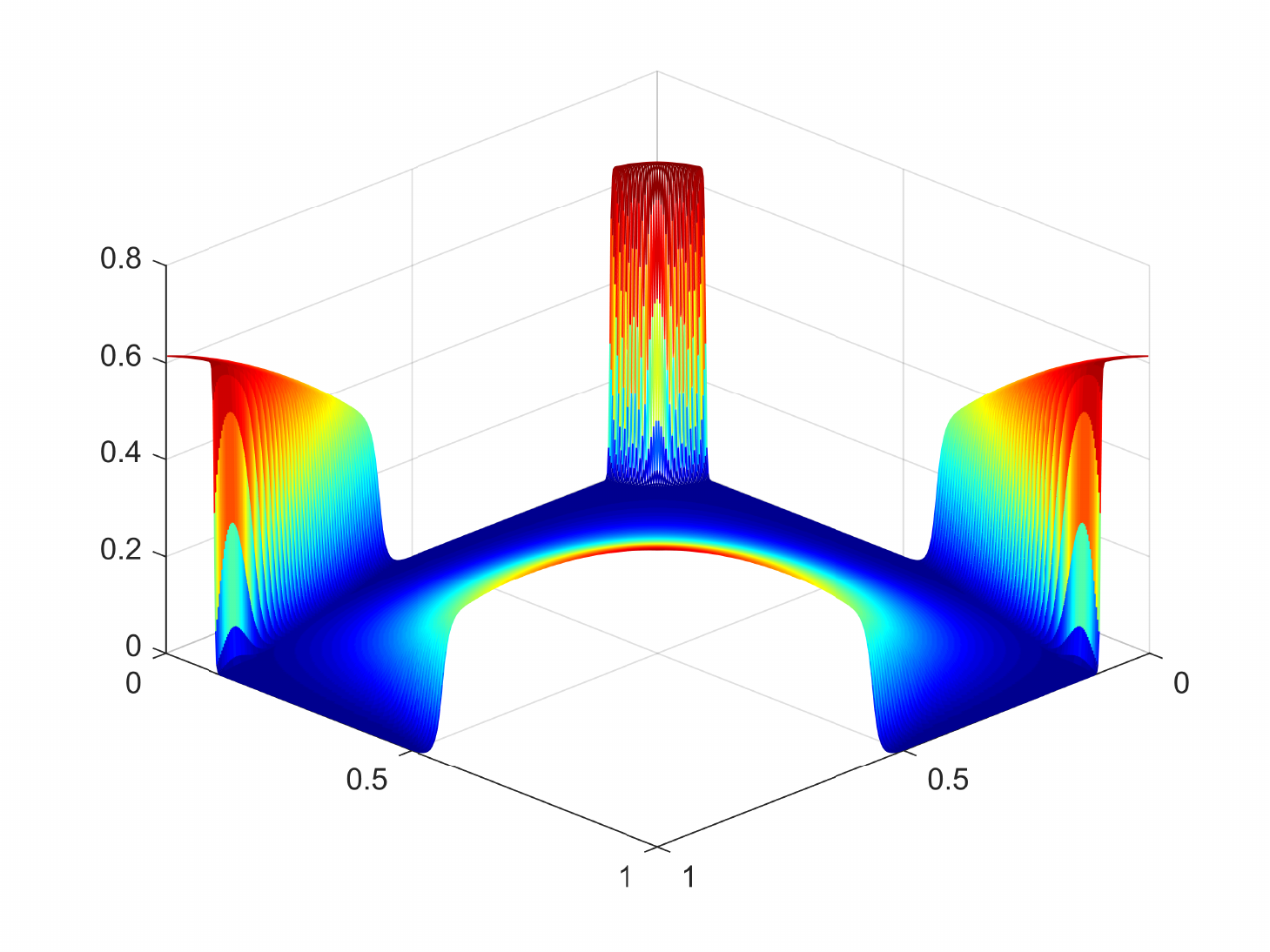}
			\subcaption{$t=1$}
		\end{subfigure}		
		\begin{subfigure}[b]{0.45\linewidth}
			\includegraphics[width=1\linewidth]{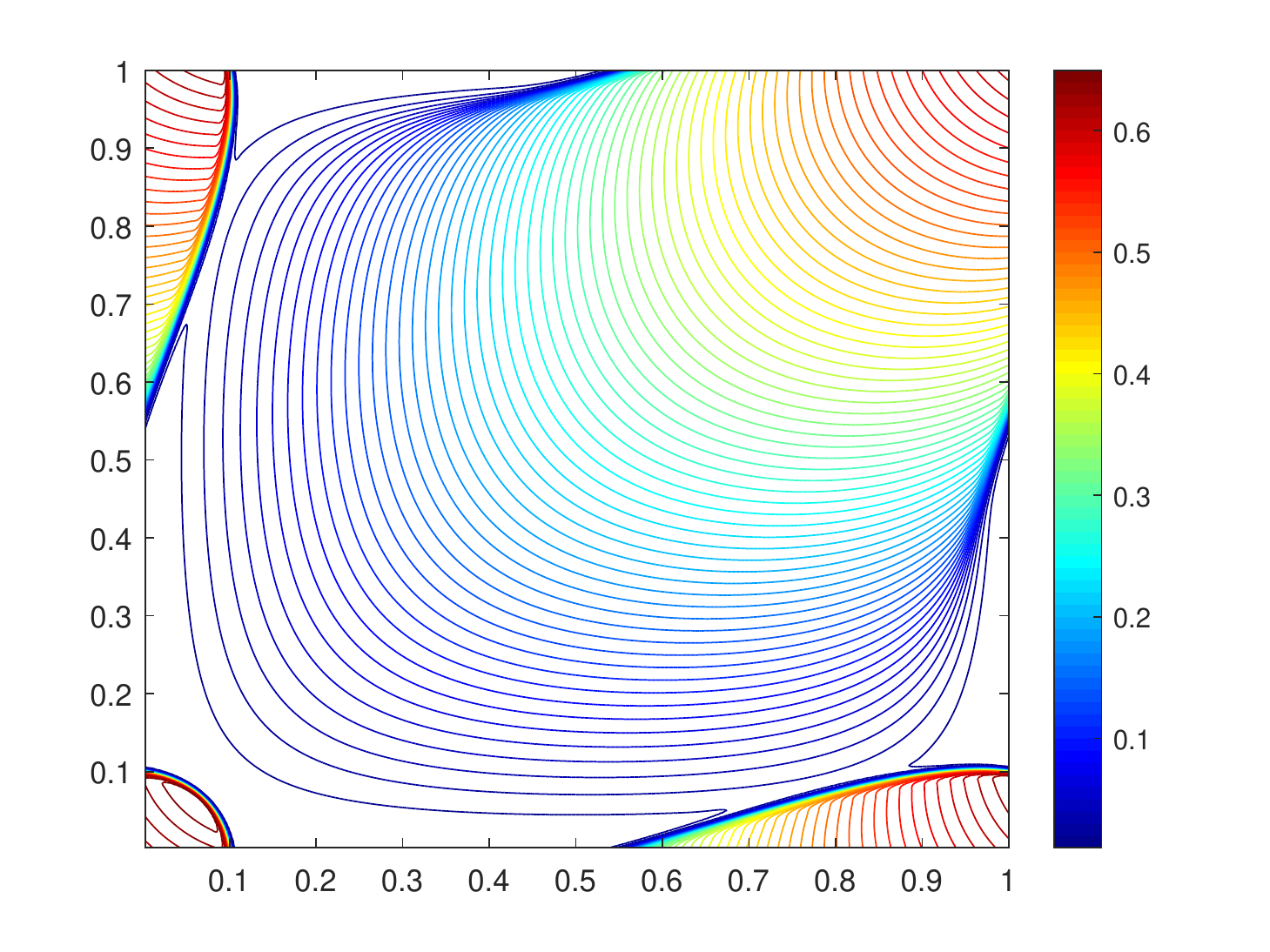}
			\subcaption{$t=1$}
		\end{subfigure}	
		\begin{subfigure}[b]{0.45\linewidth}
			\includegraphics[width=1\linewidth]{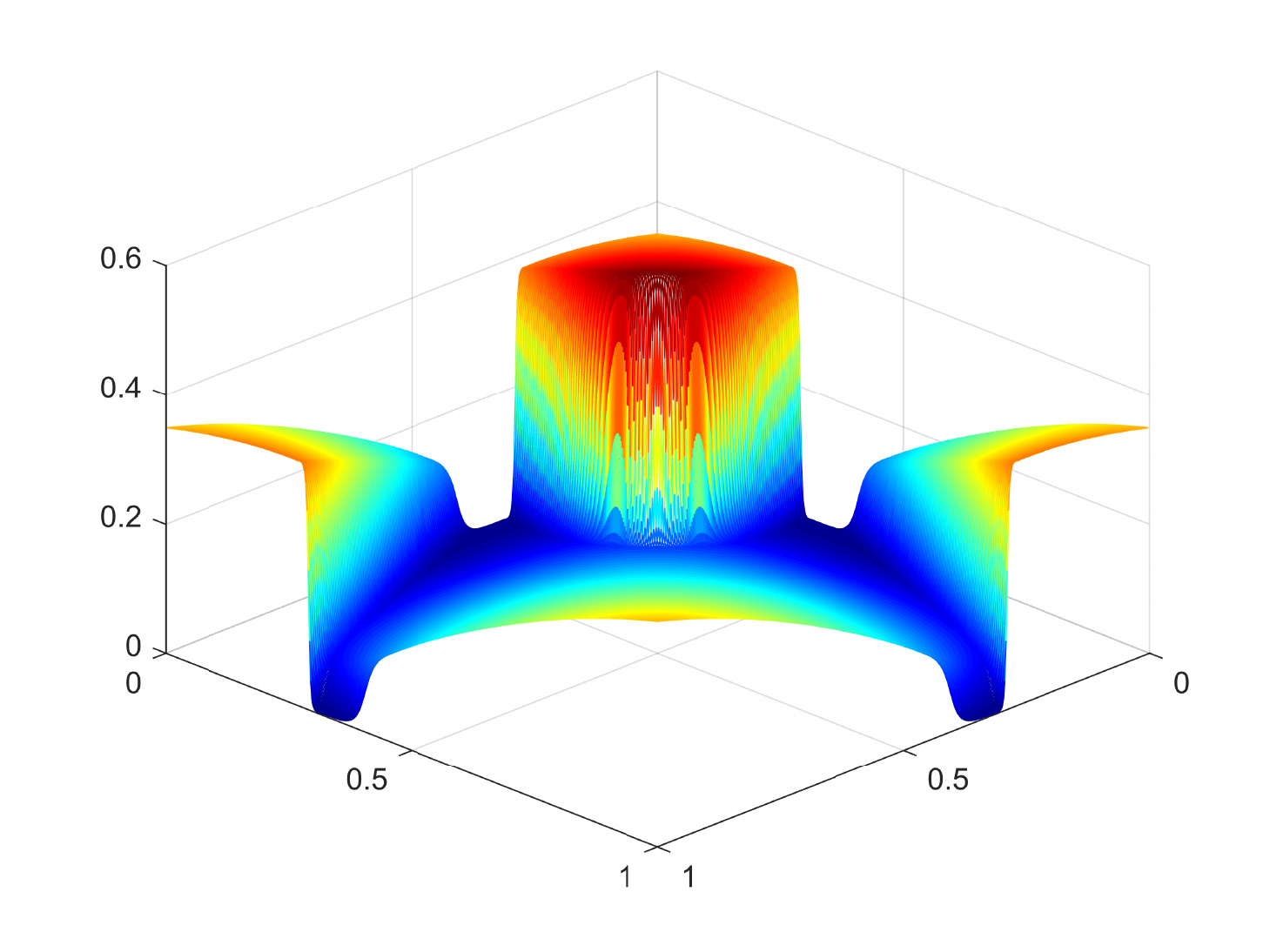}
			\subcaption{$t=2$}
		\end{subfigure}	
		\begin{subfigure}[b]{0.45\linewidth}
			\includegraphics[width=1\linewidth]{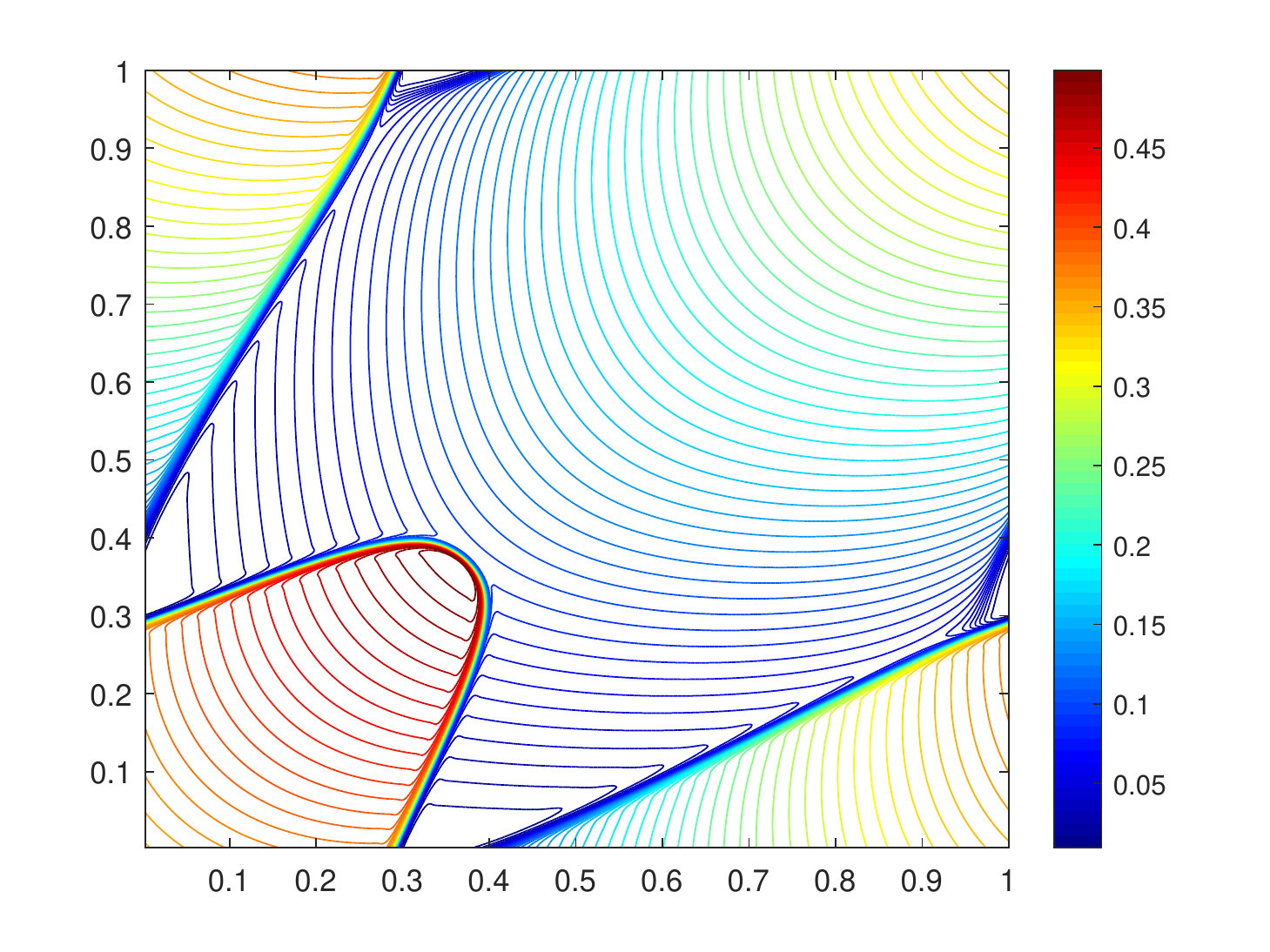}
			\subcaption{$t=2$}
		\end{subfigure}	
		\begin{subfigure}[b]{0.45\linewidth}
			\includegraphics[width=1\linewidth]{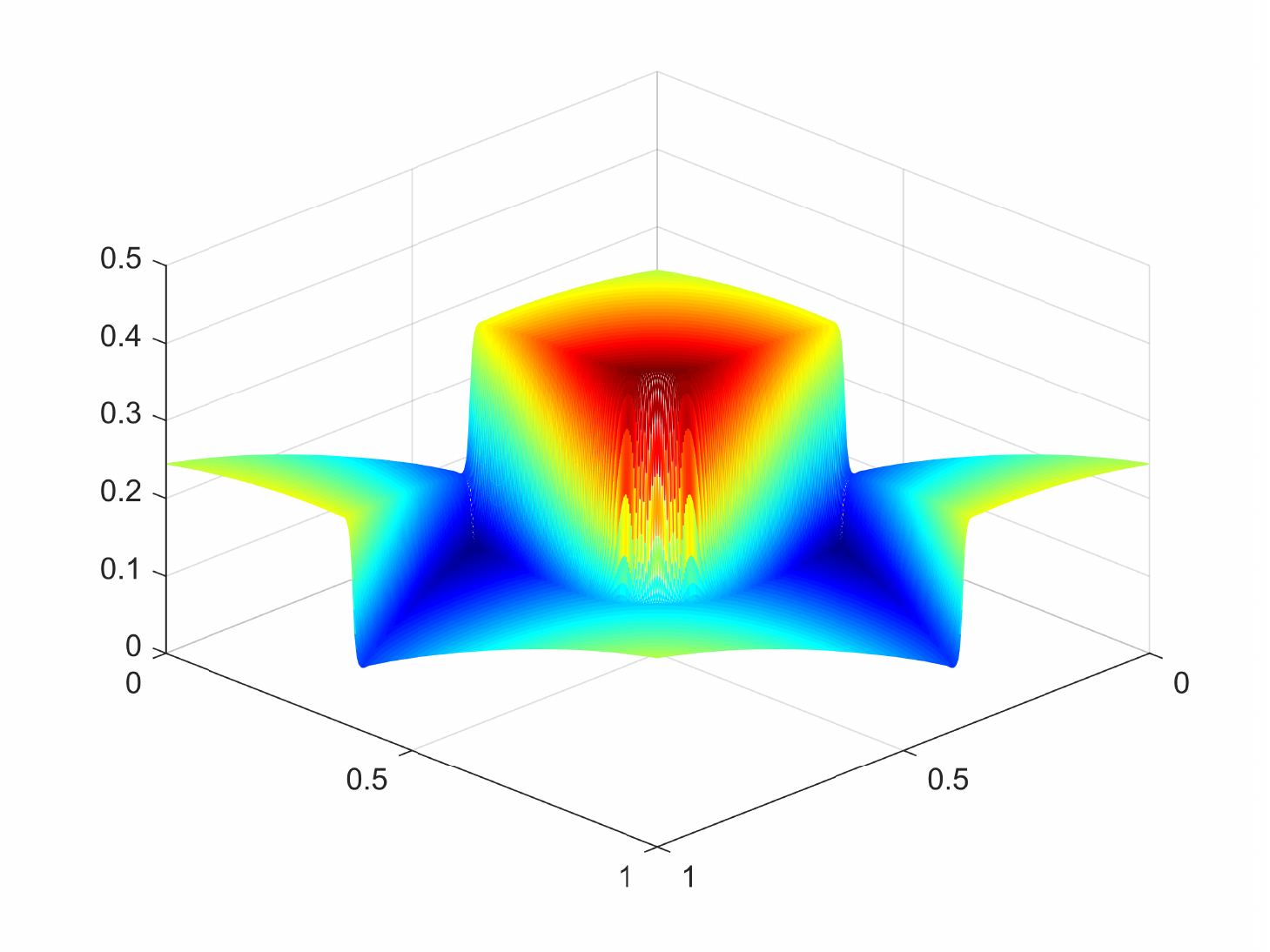}	
			\subcaption{$t=3$}
		\end{subfigure}	
		\begin{subfigure}[b]{0.45\linewidth}
			\includegraphics[width=1\linewidth]{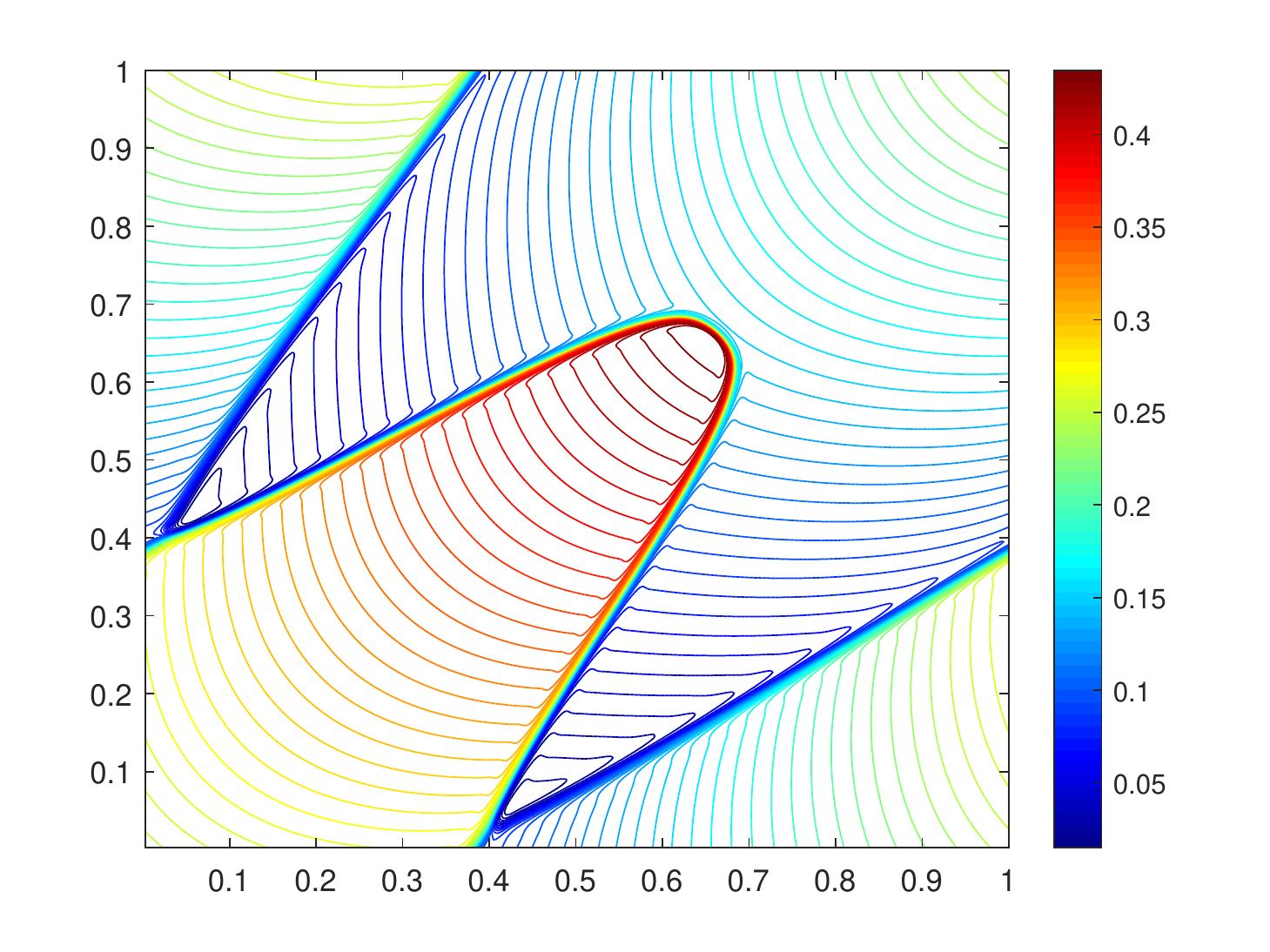}	
			\subcaption{$t=3$}
		\end{subfigure}	
		\caption{Shock test for 2D Xin-Jin model. Initial data is associated to \eqref{initial shock smmoth 2d xinjin}. Numerical solutions are obtained by DIRK2 based SL scheme for $\kappa=10^{-4}$. Mesh plot (left) and contour plot (right) of the solution $u$ at various times $t=1,2,3$. }\label{fig 10}  	
	\end{figure}

	%%%%%%%%%%%%%%%%%%%%%%%%%%%%%%%%%%%%%%%%%%%%%%%%%%%%%%%%%%%%%%%%%%%%%%%%%%%%%%%%%%%%%%%%%%%%%%%%%%%%%%%%%%%%%%%%%%%%%%%%%%%%%%%%%%%%%%%%%%%%%%%%%%%%%%%%%%%%%%%%%%%%%%%%%%%%%%%%%%%%%%%%%%%%%%%%%%%%%%%%%%%%%%%%%%%%%%%%%%%%%%%%%%%%%%%%%%%%%%%%%%%%%%%%%%%%%%%%%%%%%%%%%%%%%%%%%%%%%%%%%%%%%%%%%%%%%%%%%%%%%%%%%%%%%%%%%%%%%%%%%%%%%%%%%%%%%%%%%%%%%%%%%%%%%%%%%%%%%%%%%%%%%%%%%%%%%%%%%%%%%%%%%%%%%%%%%%%%%%%%%%%%%%%%
	
	{\bf $\bullet$ Discontinuous initial data.}
	This test has been solved by solving a viscous Burgers equation in \cite{DA}. Here, we instead solve the relaxation system \eqref{relaxation system} to capture the correct shock position of Burgers equation. Initial data is given by
	\begin{align}\label{initial shock dis 2d xinjin}
	u_0(x)=\begin{cases}
	-0.5, \quad x \leq 0, y \leq 0\\
	\, 0.25, \quad x \leq 0, y > 0\\
	\, 0.25, \quad x > 0, y \leq 0\\
	\, 0.5, \quad x > 0, y > 0
	\end{cases}, \quad v_0(x,y)= \frac{u_0^2(x,y)}{2}
	\end{align}
	with freeflow boundary condition $ (x,y) \in [-1, 1].^2$ with grid points $N_x=400$ and mesh ration $\frac{\Delta t}{\Delta x}=\frac{\Delta t}{\Delta y}=0.2$. In Fig. \ref{fig 11}, we plot the results for $t=1,2,3$. We only present result using 2D SL methods based on DIRK2 and Q-CWENO23.
	
	\begin{figure}[htbp]
		\centering	
		\begin{subfigure}[b]{0.45\linewidth}
			\includegraphics[width=1\linewidth]{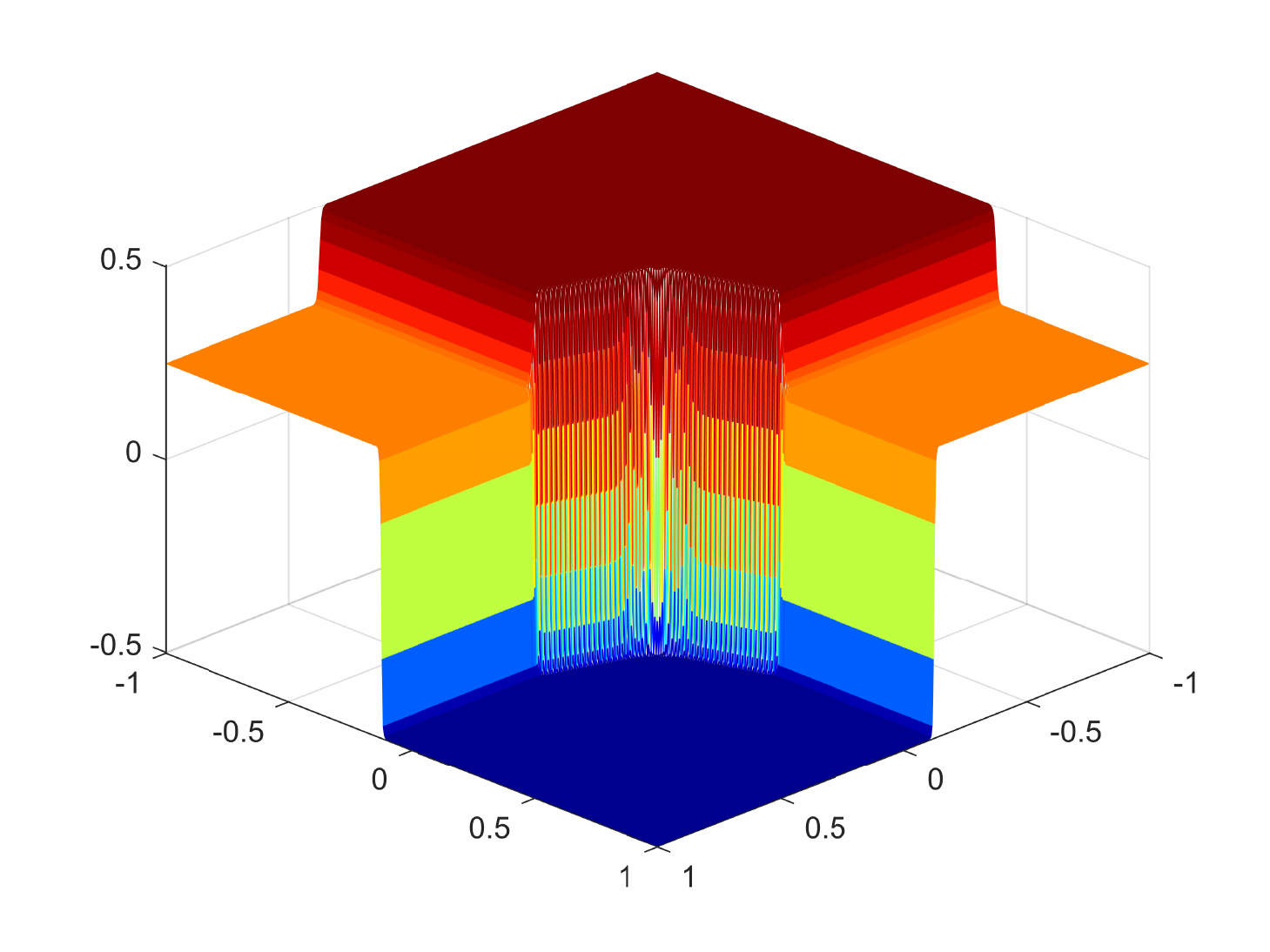}
			\subcaption{$t=1$}
		\end{subfigure}		
		\begin{subfigure}[b]{0.45\linewidth}
			\includegraphics[width=1\linewidth]{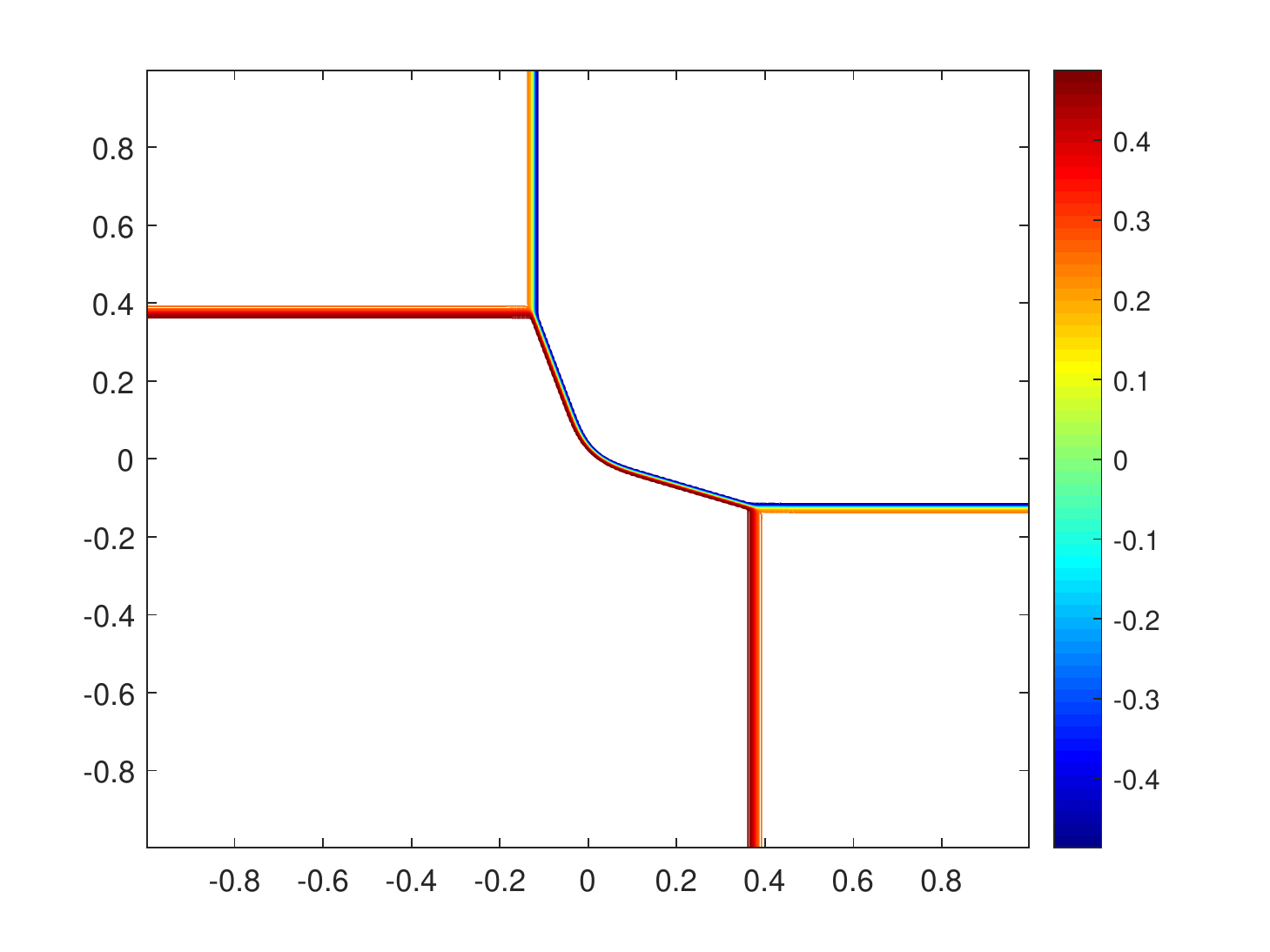}
			\subcaption{$t=1$}
		\end{subfigure}	
		\begin{subfigure}[b]{0.45\linewidth}
			\includegraphics[width=1\linewidth]{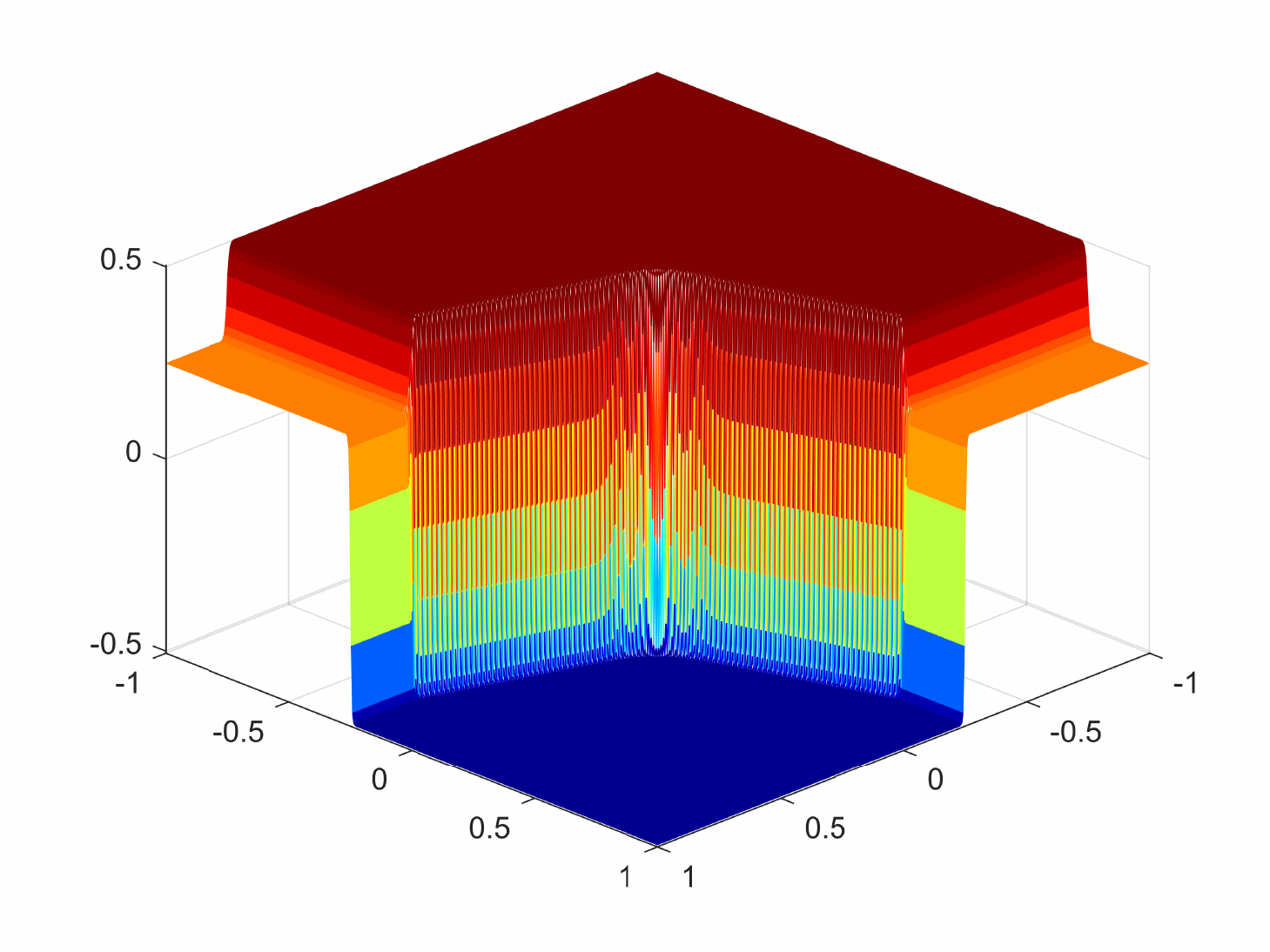}
			\subcaption{$t=2$}
		\end{subfigure}	
		\begin{subfigure}[b]{0.45\linewidth}
			\includegraphics[width=1\linewidth]{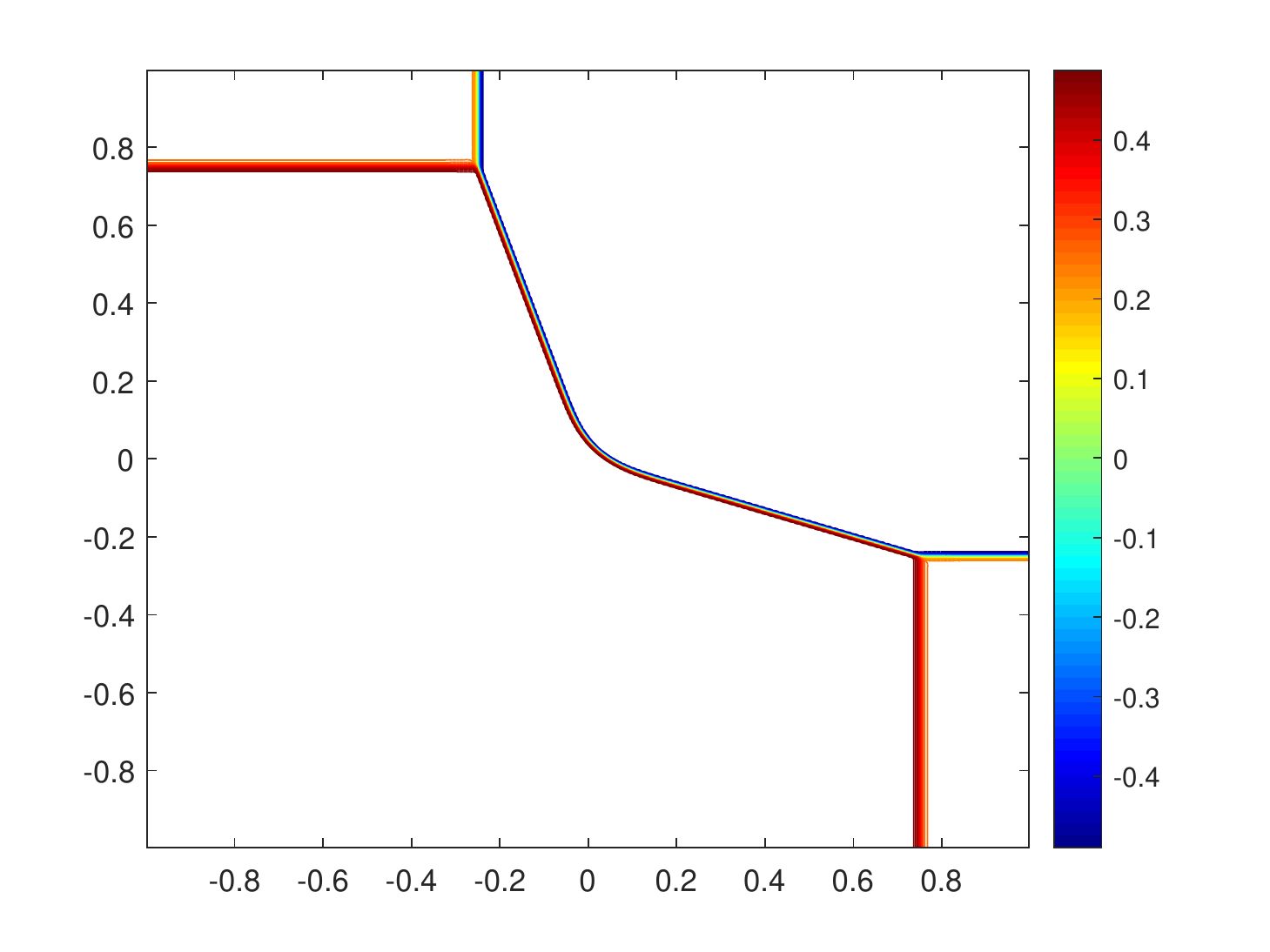}
			\subcaption{$t=2$}
		\end{subfigure}	
		\begin{subfigure}[b]{0.45\linewidth}
			\includegraphics[width=1\linewidth]{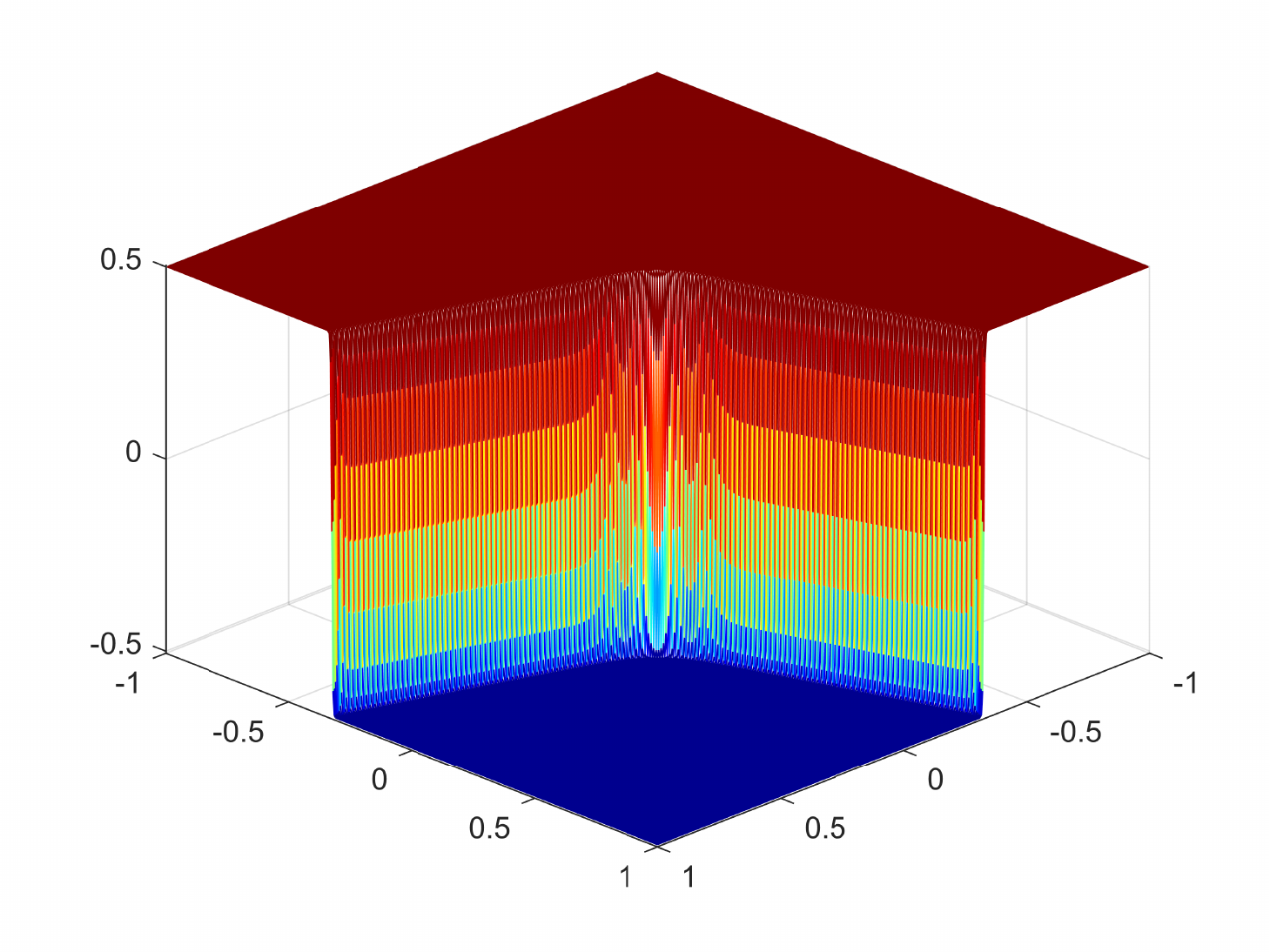}	
			\subcaption{$t=3$}
		\end{subfigure}	
		\begin{subfigure}[b]{0.45\linewidth}
			\includegraphics[width=1\linewidth]{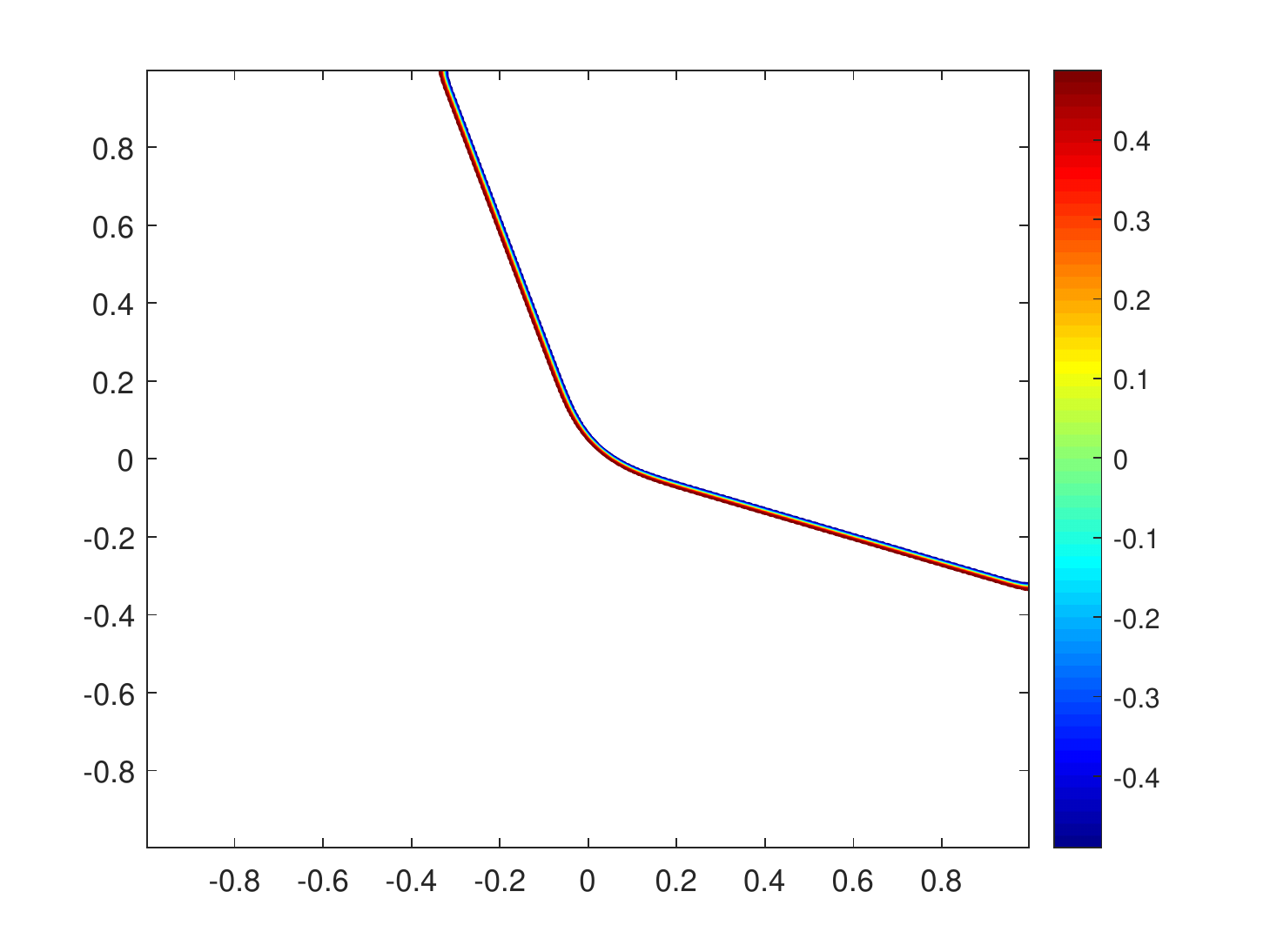}	
			\subcaption{$t=3$}
		\end{subfigure}	
		\caption{Shock test for 2D Xin-Jin model. Initial data is associated to \eqref{initial shock dis 2d xinjin}. Numerical solutions are obtained by DIRK2 based SL scheme for $\kappa=10^{-4}$. Mesh plot (left) and contour plot (right) of the solution $u$ at various times $t=1,2,3$.}\label{fig 11}  	
	\end{figure}

	\section{Conclusions}
	\label{sec:conclusions}
	We propose a simple technique to restore conservation in semi-Lagrangian schemes when non-linear reconstructions are adopted to avoid spurious oscillation or to preserve the positivity of the solution. The reconstruction is obtained by taking the sliding average of a basic non-oscillatory (positive-preserving) cell-average to point-wise reconstruction $R$, thus it inherits the non-oscillatory (positivity-preserving) property of $R$. A detailed analysis is performed of the proposed reconstruction, proving its accuracy and conservation properties, and its consistency with Lagrange interpolation in the case of linear basic reconstruction. Two dimensional extension is also considered and analyzed. The technique is then tested on the Xin-Jin relaxation system in one and two space dimensions, and on the 1D Broadwell model. Applications to BGK model and Vlasov-Poisson system will be presented in the second part of the paper.

	%%%%%%%%%%%%%%%%%%%%%%%%%%%%%%%%%%%%%%%%%%%%%%%%%%%%%%%%%%%%%%%%%%%%%%%%%%%%%%%%%%%%%%%%%%%%%%%%%%%%%%%%%%%%%%%%%%%%%%%%%%%%%%%%%%%%%%%%%%%%%%%%%%%%%%%%%%%%%%%%%%%%%%%%%%%%%%%%%%%%%%%%%%%%%%%%%%%%%%%%%%%%%%%%%%%%%%%%%%%%%%%%%%%%%%%%%%%%%%%%%%%%%%%%%%%%%%%%%%%%%%%%%%%%%%%%%%%%%%%%%%%%%%%%%%%%%%%%%%%%%%%%%%%%%%%%%%%%%%%%%%%%%%%%%%%%%%%%%%%%%%%%%%%%%%%%%%%%%%%%%%%%%%%%%%%%%%%%%%%%%%%%%%%%%%%%%%%%%%%%%%%%%%%%
	
	\appendix

	\section{Proof of Proposition \ref{Consistency}}\label{Appendix accuracy proof}
	\begin{proof}
		We first write \eqref{scheme} as
		\begin{align}\label{Q explicit 1d}
		\begin{split}
		Q_{i+\theta}&= \sum_{\ell=\text{even}}^k (\Delta x)^{\ell} \left(\alpha_{\ell}(\theta)R_{i}^{(\ell)} +\left(\frac{1}{(\ell+1)!}\left(\frac{1}{2}\right)^{\ell} - \alpha_{\ell}(\theta)\right)R_{i+1}^{(\ell)}\right)
		%\cr &
		+ \sum_{\ell=\text{odd}}^k (\Delta x)^{\ell} \alpha_{\ell}(\theta)\left(R_{i}^{(\ell)}-R_{i+1}^{(\ell)}\right).
		\end{split}
		\end{align}	 
		This, together with the assumption \eqref{consistency condition}, gives
		\begin{align*}
		Q_{i+\theta}&= \sum_{\ell=\text{even}}^k (\Delta x)^{\ell} \left(\alpha_{\ell}(\theta)u_{i}^{(\ell)} +\left(\frac{1}{(\ell+1)!}\left(\frac{1}{2}\right)^{\ell} - \alpha_{\ell}(\theta)\right)u_{i+1}^{(\ell)}\right)
		\cr &\quad 
		+ \sum_{\ell=\text{odd}}^k (\Delta x)^{\ell} \alpha_{\ell}(\theta)\left(u_{i}^{(\ell)}-u_{i+1}^{(\ell)}\right) + (\Delta x)^{k+2}\cr
		&=  \sum_{\ell=0}^k (\Delta x)^{\ell} \left(\alpha_{\ell}(\theta)u_{i}^{(\ell)} +\beta_\ell(\theta)u_{i+1}^{(\ell)}\right)+ (\Delta x)^{k+2}.
		\end{align*}
		Using Taylor's expansion $u_{i+1}^{(\ell)} =  u_i^{(\ell)}+ u_i^{(\ell+1)}\Delta x+ \frac{1}{2}u_i^{(\ell+2)} (\Delta x)^2 + \frac{1}{6}u_i^{(\ell+3)}(\Delta x)^3  + \cdots$,
		we obtain
		\begin{align*}
		Q_{i+\theta} &=  \sum_{\ell=0}^k (\Delta x)^{\ell} \left(\alpha_{\ell}(\theta)u_{i}^{(\ell)} +\beta_\ell(\theta) \sum_{m=0}^{k+1-\ell} \frac{u_i^{(\ell+m)}}{m !}(\Delta x)^m \right)+ \mathcal{O}\left((\Delta x)^{k+2}\right)\cr
		%&=  \sum_{\ell=0}^k (\Delta x)^{\ell} \alpha_{\ell}(\theta)u_{i}^{(\ell)} + \sum_{\ell=0}^k \sum_{m=0}^{k+1-\ell}(\Delta x)^{\ell+m} \beta_\ell(\theta)  \frac{u_i^{(\ell+m)}}{m !} + \mathcal{O}\left((\Delta x)^{k+2}\right)\cr
		&=  \sum_{\ell=0}^k (\Delta x)^{\ell} \alpha_{\ell}(\theta)u_{i}^{(\ell)} + \sum_{\ell=0}^k \sum_{m=0}^{k-\ell}(\Delta x)^{\ell+m} \beta_\ell(\theta)  \frac{u_i^{(\ell+m)}}{m !} + \sum_{\ell=0}^k (\Delta x)^{k+1} \beta_\ell(\theta)  \frac{u_i^{(k+1)}}{(k+1-\ell) !} + \mathcal{O}\left((\Delta x)^{k+2}\right)\cr
		&=:  \sum_{\ell=0}^k (\Delta x)^{\ell} \lambda_\ell(\theta) u_{i}^{(\ell)}+ \sum_{\ell=0}^k (\Delta x)^{k+1} \beta_\ell(\theta)  \frac{u_i^{(k+1)}}{(k+1-\ell) !} + \mathcal{O}\left((\Delta x)^{k+2}\right),
		\end{align*}
		where $\displaystyle \lambda_\ell(\theta)=\alpha_\ell(\theta)  + \sum_{m=0}^{\ell}\beta_m(\theta) \frac{1}{(\ell-m)!}$. Note that
		\begin{align*}
		\sum_{\ell=0}^k \beta_\ell(\theta)  \frac{1}{(k+1-\ell) !} 	
		%	&= \sum_{\ell=0}^k \beta_\ell(\theta)  \frac{1}{(k+1-\ell) !} +  \left(\alpha_{k+1}(\theta) + \beta_{k+1}(\theta)\right) \cr 
		&=\alpha_{k+1}(\theta)  + \sum_{m=0}^{k+1}\beta_{m}(\theta) \frac{1}{(k+1-m)!}=\lambda_{k+1}(\theta).
		\end{align*}
		The first equality follows from $\alpha_{k+1}(\theta) + \beta_{k+1}(\theta)=0$, which holds due to \eqref{ab odd} for an even integer $k$. To sum up,
		\begin{align*}
		Q_{i+\theta}&= \sum_{\ell=0}^{k+1} (\Delta x)^{\ell} \lambda_\ell(\theta) u_{i}^{(\ell)}+ \mathcal{O}\left((\Delta x)^{k+2}\right),
		\end{align*}
		and this can be written explicitly as follows:
		\begin{align*}
		Q_{i+\theta} 
		&= u_{i}^{(0)} + \theta u_{i}^{(1)}\Delta x + \left(\frac{\theta^2}{2}+ \frac{1}{24}\right) u_{i}^{(2)}(\Delta x)^2 + \left(\frac{\theta^3}{6}+ \frac{\theta}{24}\right) u_{i}^{(3)}(\Delta x)^3 \cr
		&\quad+ \left(\frac{\theta^4}{24} + \frac{\theta^2}{48}+ \frac{1}{1920}\right) u_{i}^{(4)}(\Delta x)^4 + \left( \frac{\theta^5}{120} + \frac{\theta^3}{144}+ \frac{\theta}{1920}\right) u_{i}^{(5)}(\Delta x)^5\cr 
		&+ \left(\frac{\theta^6}{720} + \frac{\theta^4}{576} + \frac{\theta^2}{3840} + \frac{1}{322560}\right) u_{i}^{(6)}(\Delta x)^6 + \cdots + \mathcal{O}\left((\Delta x)^{k+2}\right)\cr
		&= \sum_{\ell=0}^{k+1} \frac{\theta^{\ell}}{\ell !}u_{i}^{(\ell)}(\Delta x)^{\ell} + \frac{(\Delta x)^{2}}{24}\sum_{\ell=0}^{k-1} \frac{\theta^{\ell}}{\ell !}u_{i}^{(\ell+2)}(\Delta x)^{\ell} +\frac{(\Delta x)^{4}}{1920}\sum_{\ell=0}^{k-3} \frac{\theta^{\ell}}{\ell !}u_{i}^{(\ell+4)}(\Delta x)^{\ell}\cr
		&\quad+ \frac{(\Delta x)^6}{322560}\sum_{\ell=0}^{k-5} \frac{\theta^{\ell}}{\ell !}u_{i}^{(\ell+4)}(\Delta x)^{\ell} +\cdots + \mathcal{O}\left((\Delta x)^{k+2}\right).
		\end{align*}
		Consequently, we can derive
		\begin{align*}
		Q_{i+\theta} &= u(x_{i+\theta}) + \frac{(\Delta x)^{2}}{24}u^{(2)}(x_{i+\theta}) + \frac{(\Delta x)^{4}}{1920}u^{(4)}(x_{i+\theta}) + \cdots + 
		%	 \frac{(\Delta x)^{6}}{322560}u^{(6)}(x_{i+\theta}) +
		\mathcal{O}\left((\Delta x)^{k+2}\right)\cr
		&=\sum_{\ell=\text{even}}^k \left(\Delta x\right)^{\ell} u^{(\ell)}(x_{i+\theta}) 
		\frac{1}{(\ell+1)!}\left(\frac{1}{2}\right)^{\ell} + \mathcal{O}\left((\Delta x)^{k+2}\right)\cr
		&=\bar{u}(x_{i+\theta}) + \mathcal{O}\left((\Delta x)^{k+2}\right).
		\end{align*}
		%	which completes the proof.
	\end{proof}

	\section{Proof of Remark \ref{rmk 2.1}}\label{appendix remark proof}
	Consider any polynomial reconstruction $R_i(x), R_{i+1}(x)\in \mathbb{P}^k$ of the form \eqref{R 1d taylor} such that
	\begin{align*}
	u^{(\ell)}(x_i) - R_i^{(\ell)}= \mathcal{O}\left((\Delta x)^{k+1-\ell}\right), \quad u^{(\ell)}(x_{i+1}) - R_{i+1}^{(\ell)}= \mathcal{O}\left((\Delta x)^{k+1-\ell}\right).
	\end{align*} 
	From the assumption that $R_i^{(\ell)}$ is represented by Lipschitz functions $F_\ell$ of $\{\bar{u}_{i-r},\cdots,\bar{u}_{i+s}\}$, we can write it as
	\begin{align*}
	R_i^{(\ell)} = u^{(\ell)}(x_i) + F_\ell\left(\bar{u}_{i-r},\cdots,\bar{u}_{i+s}\right)- u^{(\ell)}(x_i),
	\end{align*}
	where $F_\ell\left(\bar{u}_{i-r},\cdots,\bar{u}_{i+s}\right)- u^{(\ell)}(x_i) = \mathcal{O}\left((\Delta x)^{k+1-\ell}\right).$
	Also, in \eqref{p derivative}, one can see that the function $p_{i}^{(\ell)} \in \mathbb{P}^{k-\ell}$ is written with a Lipschitz function $G_\ell$ of $\{\bar{u}_{i-r},\cdots,\bar{u}_{i+s}\}$ such that
	\begin{align*}
	p_{i}^{(\ell)} = u^{(\ell)}(x_i) + G_\ell\left(\bar{u}_{i-r},\cdots,\bar{u}_{i+s}\right) - u^{(\ell)}(x_i),\quad G_\ell\left(\bar{u}_{i-r},\cdots,\bar{u}_{i+s}\right) - u^{(\ell)}(x_i) &= \mathcal{O}\left((\Delta x)^{k+1-\ell}\right).
	\end{align*}
	Now, let us define $H_\ell\left(\{\bar{u}_{i-r},\cdots,\bar{u}_{i+s}\}\right)$ by 
	$$H_\ell \left(\bar{u}_{i-r},\cdots,\bar{u}_{i+s}\right):=G_\ell\left(\bar{u}_{i-r},\cdots,\bar{u}_{i+s}\right) - F_\ell\left(\bar{u}_{i-r},\cdots,\bar{u}_{i+s}\right),$$ 
	then it is Lipschitz continuous w.r.t. $\{\bar{u}_{i-r},\cdots,\bar{u}_{i+s}\}$ and $H^\ell\left(\bar{u}_{i-r},\cdots,\bar{u}_{i+s}\right)= \mathcal{O}\left((\Delta x)^{k+1-\ell}\right)$.
	Consequently,	
	\begin{align*}
	&u^{(\ell)}(x_i) - R_i^{(\ell)} - \left(u^{(\ell)}(x_{i+1}) - R_{i+1}^{(\ell)}\right)\cr &\quad= \left\{u^{(\ell)}(x_i) - p_i^{(\ell)} -\left(u^{(\ell)}(x_{i+1}) - p_{i+1}^{(\ell)}\right)\right\} + \left(p_{i}^{(\ell)} - R_{i}^{(\ell)}\right) - \left(p_{i+1}^{(\ell)}  - R_{i+1}^{(\ell)}\right)\cr
	&\quad= \mathcal{O}\left((\Delta x)^{k+2-\ell}\right) + H_\ell \left(\{\bar{u}_{i-r},\cdots,\bar{u}_{i+s}\}\right)-H_\ell \left(\{\bar{u}_{i+1-r},\cdots,\bar{u}_{i+1+s}\}\right)\cr
	&\quad=\mathcal{O}\left((\Delta x)^{k+2-\ell}\right). 
	\end{align*} 	
	
	\section{Proof of Proposition \ref{1d consistency prop}}\label{Appendix consistency proof}
	\begin{proof}
		The proof is divided into three steps	
		
		\noindent\textbf{Step 1, Reconstruction of $R_i(x)$:} This polynomial reconstruction is also introduced in \cite{Shu98}. Given cell average values $\{\bar{u}_i\}$, we first consider a primitive function $U(x):= \int_{-\infty} ^{x}u(x) dx$ and compute its cell boundary values as 
		$$U(x_{i+j-\frac{1}{2}})= \int_{-\infty} ^{x_{i+j-\frac{1}{2}}}u(x) \,dx= \sum_{m=-\infty}^{i+j-1} \bar{u}_m \Delta x, \quad -r \leq j \leq r+1.$$
		Then, look for a polynomial $P_i(x)$ such that  $$P_i(x_{i+j-\frac{1}{2}})=U(x_{i+j-\frac{1}{2}}), \quad -r \leq j \leq r+1.$$
		By differentiating $P_i(x)$, we obtain the basic reconstruction $R_i(x)$ which satisfies 
		\[P_i'(x)=R_i(x)=u(x) + \mathcal{O}\left((\Delta x)^{k+1} \right)
		\]
		%	is a polynomial of order $k$ and satisfies
		and the condition \eqref{Lag condition}.
		The resulting form of $R_i(x)$ is given by
		\begin{align*}
		\displaystyle		R_i(x)= \sum_{m=1}^{k+1}\sum_{j=0}^{m-1} \bar{u}_{i-r+j} \Delta x \left\{\frac{\sum_{\ell=0,\ell \neq m}^{k+1} \prod_{q=0,q \neq m,\ell }^{k+1} \left( x - x_{i-r+\ell-\frac{1}{2}}\right) }{\prod_{\ell=0,\ell \neq m}^{k+1}\left( x_{i-r+m-\frac{1}{2}} - x_{i-r+\ell-\frac{1}{2}}\right)}\right\}.
		\end{align*}	
		\noindent\textbf{Step 2, Reconstruction of $Q_{i+\theta}$:}
		To reconstruct $Q_{i+\theta}$, we first compute the cell average value of $R_i(x)$ on $[x_{i-\frac{1}{2}+\theta}, x_{i+\frac{1}{2}})$:
		\begin{align}\label{R i integral}
		\frac{1}{\Delta x}&\int_{x_{i-\frac{1}{2}+\theta}} ^{x_{i+\frac{1}{2}}}R_i(x) \,dx = \sum_{m=1}^{k+1}\sum_{j=0}^{m-1} \bar{u}_{i-r+j} \left\{\frac{\prod_{\ell=0,\ell \neq m}^{k+1} (r-\ell+1) -  \prod_{\ell=0,\ell \neq m}^{k+1} (\theta+r-\ell) }{\prod_{\ell=0,\ell \neq m}^{k+1}\left( m-\ell\right)}\right\},
		\end{align} 
		which directly come from
		%	\begin{align*}
		%	\begin{split}
		%	\sum_{\ell=0,\ell \neq m}^{k+1} \prod_{q=0,q \neq m,\ell }^{k+1} \left( x - x_{i-r+\ell-\frac{1}{2}}\right) = \frac{d}{dx} \left( \prod_{\ell=0,\ell \neq m}^{k+1} \left( x - x_{i-r+\ell-\frac{1}{2}}\right)\right)
		%	\end{split},
		%	\end{align*}
		%	in the computation of the following definite integral:
		\begin{align*}
		\int_{x_{i-\frac{1}{2}+\theta}} ^{x_{i+\frac{1}{2}}}&\left(\sum_{\ell=0,\ell \neq m}^{k+1} \prod_{q=0,q \neq m,\ell }^{k+1} \left( x - x_{i-r+\ell-\frac{1}{2}}\right)\right)  \,dx= 
		%		\left[ \prod_{\ell=0,\ell \neq m}^{k+1} \left( x - x_{i-r+\ell-\frac{1}{2}}\right)\right]_{x_{i-\frac{1}{2}+\theta}} ^{x_{i+\frac{1}{2}}}
		%		= 
		\prod_{\ell=0,\ell \neq m}^{k+1} x_{r-\ell+1} -  \prod_{\ell=0,\ell \neq m}^{k+1}  x_{\theta+r-\ell}.
		%	&= (\Delta x)^{k+1} 
		\end{align*} 
		Similarly, we compute the cell average of $R_{i+1}(x)$ on $[x_{i+\frac{1}{2}}, x_{i+\frac{1}{2}+\theta})$:
		%	\begin{align*}
		%	\displaystyle		R_{i+1}(x)= \sum_{m=1}^{k+1}\sum_{j=0}^{m-1} \bar{u}_{i+1-r+j} \Delta x \left\{\frac{\sum_{\ell=0,\ell \neq m}^{k+1} \prod_{q=0,q \neq m,\ell }^{k+1} \left( x - x_{i+1-r+\ell-\frac{1}{2}}\right) }{\prod_{\ell=0,\ell \neq m}^{k+1}\left( x_{i+1-r+m-\frac{1}{2}} - x_{i+1-r+\ell-\frac{1}{2}}\right)}\right\}
		%	\end{align*}
		%	and use
		%	\begin{align*}
		%	\begin{split}
		%	\sum_{\ell=0,\ell \neq m}^{k+1} \prod_{q=0,q \neq m,\ell }^{k+1} \left( x - x_{i+1-r+\ell-\frac{1}{2}}\right) = \frac{d}{dx} \left( \prod_{\ell=0,\ell \neq m}^{k+1} \left( x - x_{i+1-r+\ell-\frac{1}{2}}\right)\right)
		%	\end{split}
		%	\end{align*}
		%	to compute
		%	\begin{align*}
		%	\int_{x_{i+\frac{1}{2}}} ^{x_{i+\frac{1}{2}+\theta}}&\left(\sum_{\ell=0,\ell \neq m}^{k+1} \prod_{q=0,q \neq m,\ell }^{k+1} \left( x - x_{i+1-r+\ell-\frac{1}{2}}\right)\right)  dx \cr
		%	&= 
		%	\left[ \prod_{\ell=0,\ell \neq m}^{k+1} \left( x - x_{i+1-r+\ell-\frac{1}{2}}\right)\right]_{x_{i+\frac{1}{2}}} ^{x_{i+\frac{1}{2}+\theta}}= \prod_{\ell=0,\ell \neq m}^{k+1} x_{\theta +r-\ell} -  \prod_{\ell=0,\ell \neq m}^{k+1}  x_{r-\ell}.
		%	%	&= (\Delta x)^{k+1} 
		%	\end{align*} 
		%	Then,
		\begin{align}\label{R ip1 integral}
		\frac{1}{\Delta x}&\int_{x_{i+\frac{1}{2}}} ^{x_{i+\frac{1}{2}+\theta}}R_{i+1}(x) \,dx
		%	&= \sum_{m=1}^{k+1}\sum_{j=0}^{m-1} \bar{u}_{i+1-r+j} \left\{\frac{\prod_{\ell=0,\ell \neq m}^{k+1} (\theta +r-\ell) -  \prod_{\ell=0,\ell \neq m}^{k+1} (r-\ell) }{\prod_{\ell=0,\ell \neq m}^{k+1}\left( m-\ell\right)}\right\}\cr
		= \sum_{m=1}^{k+1}\sum_{j=1}^{m} \bar{u}_{i-r+j} \left\{\frac{\prod_{\ell=0,\ell \neq m}^{k+1} (\theta +r-\ell) -  \prod_{\ell=0,\ell \neq m}^{k+1} (r-\ell) }{\prod_{\ell=0,\ell \neq m}^{k+1}\left( m-\ell\right)}\right\}.
		\end{align}
		Now, we insert \eqref{R i integral} and \eqref{R ip1 integral} into the identity for $Q_{i+\theta}$ in \eqref{Q decom}:
		\begin{align*}
		\begin{split}
		Q_{i+\theta}&=\frac{1}{\Delta x}\int_{x_{i-\frac{1}{2}+\theta}} ^{x_{i+\frac{1}{2}}}R_{i}(x) \,dx + \frac{1}{\Delta x}\int_{x_{i+\frac{1}{2}}} ^{x_{i+\frac{1}{2}+\theta}}R_{i+1}(x) \,dx,
		%			&= \sum_{m=1}^{k+1}\sum_{j=0}^{m-1} \bar{u}_{i-r+j} \left\{\frac{\prod_{\ell=0,\ell \neq m}^{k+1} (r-\ell+1) -  \prod_{\ell=0,\ell \neq m}^{k+1} (\theta+r-\ell) }{\prod_{\ell=0,\ell \neq m}^{k+1}\left( m-\ell\right)}\right\}\cr
		%			&\quad +\sum_{m=1}^{k+1}\sum_{j=1}^{m} \bar{u}_{i-r+j} \left\{\frac{\prod_{\ell=0,\ell \neq m}^{k+1} (\theta +r-\ell) -  \prod_{\ell=0,\ell \neq m}^{k+1} (r-\ell) }{\prod_{\ell=0,\ell \neq m}^{k+1}\left( m-\ell\right)}\right\}.
		\end{split}
		\end{align*}
		and decompose this into four parts:
		\begin{align*}
		\begin{split}
		%	Q_{i+\theta}
		%	&\,= \sum_{j=0}^{k}\sum_{m=j+1}^{k+1} \bar{u}_{i-r+j} \left\{\frac{\prod_{\ell=0,\ell \neq m}^{k+1} (r-\ell+1) -  \prod_{\ell=0,\ell \neq m}^{k+1} (\theta+r-\ell) }{\prod_{\ell=0,\ell \neq m}^{k+1}\left( m-\ell\right)}\right\}\cr
		%	&\,\quad+\sum_{j=1}^{k+1}\sum_{m=j}^{k+1} \bar{u}_{i-r+j} \left\{\frac{\prod_{\ell=0,\ell \neq m}^{k+1} (\theta +r-\ell) -  \prod_{\ell=0,\ell \neq m}^{k+1} (r-\ell) }{\prod_{\ell=0,\ell \neq m}^{k+1}\left( m-\ell\right)}\right\}\cr
		%	&\,= \left(\sum_{j=0}\sum_{m=j+1}^{k+1} + \sum_{j=1}^{k}\sum_{m=j+1}^{k+1}\right) \bar{u}_{i-r+j} \left\{\frac{\prod_{\ell=0,\ell \neq m}^{k+1} (r-\ell+1) -  \prod_{\ell=0,\ell \neq m}^{k+1} (\theta+r-\ell) }{\prod_{\ell=0,\ell \neq m}^{k+1}\left( m-\ell\right)}\right\}\cr
		%	&\,\quad+\left(\sum_{j=k+1}\sum_{m=j}^{k+1} +\sum_{j=1}^{k}\sum_{m=j} +\sum_{j=1}^{k}\sum_{m=j+1}^{k+1} \right) \bar{u}_{i-r+j} \left\{\frac{\prod_{\ell=0,\ell \neq m}^{k+1} (\theta +r-\ell) -  \prod_{\ell=0,\ell \neq m}^{k+1} (r-\ell) }{\prod_{\ell=0,\ell \neq m}^{k+1}\left( m-\ell\right)}\right\}\cr
		Q_{i+\theta}&=  \sum_{j=0}\sum_{m=j+1}^{k+1} \bar{u}_{i-r+j} \left\{\frac{\prod_{\ell=0,\ell \neq m}^{k+1} (r-\ell+1) -  \prod_{\ell=0,\ell \neq m}^{k+1} (\theta+r-\ell) }{\prod_{\ell=0,\ell \neq m}^{k+1}\left( m-\ell\right)}\right\}\cr
		&\quad+\sum_{j=k+1}\sum_{m=j}^{k+1} \bar{u}_{i-r+j} \left\{\frac{\prod_{\ell=0,\ell \neq m}^{k+1} (\theta +r-\ell) -  \prod_{\ell=0,\ell \neq m}^{k+1} (r-\ell) }{\prod_{\ell=0,\ell \neq m}^{k+1}\left( m-\ell\right)}\right\}\cr
		&\quad+ \sum_{j=1}^{k}\sum_{m=j} \bar{u}_{i-r+j} \left\{\frac{\prod_{\ell=0,\ell \neq m}^{k+1} (\theta +r-\ell) -  \prod_{\ell=0,\ell \neq m}^{k+1} (r-\ell) }{\prod_{\ell=0,\ell \neq m}^{k+1}\left( m-\ell\right)}\right\}\cr
		&\quad+ \sum_{j=1}^{k}\sum_{m=j+1}^{k+1} \bar{u}_{i-r+j} \left\{\frac{\prod_{\ell=0,\ell \neq m}^{k+1} (r-\ell+1) -  \prod_{\ell=0,\ell \neq m}^{k+1} (r-\ell) }{\prod_{\ell=0,\ell \neq m}^{k+1}\left( m-\ell\right)}\right\}\cr
		&=:I_1 + I_2 + I_3 +I_4.
		\end{split}
		\end{align*}
		%For $I_1$ term, we insert $x=x_{i+\theta}$ into the identity $\displaystyle \sum_{m=0}^{k+1}\frac{ \prod_{\ell=0,\ell \neq m }^{k+1} \left( x - x_{i-r+\ell}\right) }{\prod_{\ell=0,\ell \neq m}^{k+1}\left( x_{i-r+m} - x_{i-r+\ell}\right)}=1$, we obtain
		%\begin{align*}
		%\sum_{m=0}^{k+1}\frac{ \prod_{\ell=0,\ell \neq m }^{k+1} \left(\theta+r-\ell\right) }{\prod_{\ell=0,\ell \neq m}^{k+1}\left( m -\ell \right)}=1,
		%\end{align*}
		With the identity $\displaystyle \sum_{m=0}^{k+1}\frac{ \prod_{\ell=0,\ell \neq m }^{k+1} \left(\theta+r-\ell\right) }{\prod_{\ell=0,\ell \neq m}^{k+1}\left( m -\ell \right)}=1$, we can simplify $I_1$ as
		\begin{align}\label{I_1}
		\begin{split}
		I_1&= \bar{u}_{i-r} \sum_{m=1}^{k+1}  \left\{\frac{\prod_{\ell=0,\ell \neq m}^{k+1} (r-\ell+1) -  \prod_{\ell=0,\ell \neq m}^{k+1} (\theta+r-\ell) }{\prod_{\ell=0,\ell \neq m}^{k+1}\left( m-\ell\right)}\right\}\cr
		&= \bar{u}_{i-r} \left(\sum_{m=r+1}  \frac{\prod_{\ell=0,\ell \neq m}^{k+1} (r-\ell+1) }{\prod_{\ell=0,\ell \neq m}^{k+1}\left( m-\ell\right)}- \sum_{m=1}^{k+1}  \frac{\prod_{\ell=0,\ell \neq m}^{k+1} (\theta+r-\ell) }{\prod_{\ell=0,\ell \neq m}^{k+1}\left( m-\ell\right)}\right)\cr
		&= \bar{u}_{i-r} \left( 1- \sum_{m=1}^{k+1}  \frac{\prod_{\ell=0,\ell \neq m}^{k+1} (\theta+r-\ell) }{\prod_{\ell=0,\ell \neq m}^{k+1}\left( m-\ell\right)}\right)\cr
		&= \bar{u}_{i-r}  \frac{\prod_{\ell=1}^{k+1} (\theta+r-\ell) }{\prod_{\ell=1}^{k+1}\left( -\ell\right)}.		
		\end{split}
		\end{align} 
		The $I_2$ and $I_3$ terms are calculated as
		\begin{align}\label{I_2}
		\begin{split}
		I_2&=\bar{u}_{i+r+1} \left\{\frac{\prod_{\ell=0}^{k} (\theta +r-\ell) -  \prod_{\ell=0}^{k} (r-\ell) }{\prod_{\ell=0}^{k}\left( k+1-\ell\right)}\right\}=\bar{u}_{i+r+1} \frac{\prod_{\ell=0}^{k} \theta +r-\ell}{\prod_{\ell=0}^{k} k+1-\ell}\cr
		I_3&= \sum_{j=1}^{k}\bar{u}_{i-r+j} \left\{\frac{\prod_{\ell=0,\ell \neq j}^{k+1} (\theta +r-\ell) -  \prod_{\ell=0,\ell \neq j}^{k+1} (r-\ell) }{\prod_{\ell=0,\ell \neq j}^{k+1}\left( j-\ell\right)}\right\}\cr
		&= \bar{u}_{i} \left(\frac{\prod_{\ell=0,\ell \neq r}^{k+1} (\theta +r-\ell) }{\prod_{\ell=0,\ell \neq r}^{k+1}\left( r-\ell\right)} -1\right) + \sum_{j=1, j\neq r}^{k}\bar{u}_{i-r+j} \left\{\frac{\prod_{\ell=0,\ell \neq j}^{k+1} \theta +r-\ell }{\prod_{\ell=0,\ell \neq j}^{k+1} j-\ell}\right\}.
		\end{split}
		\end{align}
		A direct computation leads to $I_4=\bar{u}_i$.
		This, combined with \eqref{I_1}, \eqref{I_2}, gives
		\begin{align}\label{consistency last Q}
		Q_{i+\theta}
		%	&= \bar{u}_{i-r}  \frac{\prod_{\ell=1}^{k+1} (\theta+r-\ell) }{\prod_{\ell=1}^{k+1}\left( -\ell\right)} + \bar{u}_{i+r+1} \frac{\prod_{\ell=0}^{k} \theta +r-\ell}{\prod_{\ell=0}^{k} k+1-\ell}\cr
		%	&\quad + \bar{u}_{i} \left(\frac{\prod_{\ell=0,\ell \neq r}^{k+1} (\theta +r-\ell) }{\prod_{\ell=0,\ell \neq r}^{k+1}\left( r-\ell\right)} -1\right) + \sum_{j=1, j\neq r}^{k}\bar{u}_{i-r+j} \left\{\frac{\prod_{\ell=0,\ell \neq j}^{k+1} \theta +r-\ell }{\prod_{\ell=0,\ell \neq j}^{k+1} j-\ell}\right\} + \bar{u}_i\cr
		&=\sum_{j=0}^{k+1} \bar{u}_{i-r+j}\prod_{\ell=0, \ell \neq j}^{k+1}\frac{\theta+r-\ell}{j-\ell }.
		\end{align}
		\textbf{Step 3, Comparison of $Q_{i+\theta}$ and $L(x_{i+\theta})$:}	
		For the comparison, we consider a Lagrange polynomial $L(x)$ which satisfies $L(x_{i-r+j}) = \bar{u}_{i-r+j}$ for $0 \leq j\leq k+1$:
		\begin{align*}
		L(x)=\sum_{j=0}^{k+1} \bar{u}_{i-r+j}\prod_{\ell=0, \ell \neq j}^{k+1}\frac{x-x_{i-r+\ell }}{x_{i-r+j}-x_{i-r+\ell }}.
		\end{align*}
		Inserting $x=x_{i+\theta}$ into $L(x)$, we obtain $L(x_{i+\theta})=Q_{i+\theta}$. This completes the proof.
		%	\begin{align*}
		%		L(x_{i+\theta})=\sum_{j=0}^{k+1} \bar{u}_{i-r+j}\prod_{\ell=0, \ell \neq j}^{k+1}\frac{\theta+r-\ell}{j-\ell }.
		%	\end{align*}
		%	which is consistent to \eqref{consistency last Q}. This completes the proof.
	\end{proof}

	%%%%%%%%%%%%%%%%%%%%%%%%%%%%%%%%%%%%%%%%%%%%%%%%%%%%%%%%%%%%%%%%%%%%%%%%%%%%%%%%%%%%%%%%%%%%%%%%%%%%%%%%%%%%%%%%%%%%%%%%%%%%%%%%%%%%%%%%%%%%%%%%%%%%%%%%%%%%%%%%%%%%%%%%%%

	\section{Proof that the condition \eqref{consistency condition} in Proposition \ref{Consistency} is satisfied by both CWENO23 and CWENO23Z.%Remark \ref{CWENO23andZ}
	}\label{appendix CWENO23andZ}
	Here, we check if the condition \eqref{consistency condition} is satisfied by \eqref{R 1d}. For this, we assume that $u$ is smooth enough so that $\omega_C^i = \frac{1}{2}+ e_C^i $ for $e_C^i=O((\Delta x)^2)$. We refer to \cite{kolb2014full} for the assumption. Then, we have
	\begin{align}\label{R esti}
	\begin{split}
	R_i^{(0)}&= \bar{u}_i  -  \frac{1}{12}\bigg(C_C+ e_C^i \bigg)\left(\bar{u}_{i+1}-2\bar{u}_i+\bar{u}_{i-1}\right) \cr
	&= \bar{u}_i  -  \frac{1}{12}\bigg(\frac{1}{2}+ \mathcal{O}((\Delta x)^2) \bigg)\left((\Delta x)^2\bar{u}''_i+ \mathcal{O}((\Delta x)^4)\right)  \cr
	&= \bar{u}_i  -  \frac{1}{24}\left((\Delta x)^2\bar{u}''_i\right) + \mathcal{O}((\Delta x)^4) \cr
	&= u_i + \mathcal{O}((\Delta x)^4).
	\end{split}
	\end{align}
	Similarly, we write $\omega_L^i=\frac{1}{4}+ e_L^i$ and $\omega_R^i=\frac{1}{4}+ e_R^i$ with $e_L^i,e_R^i = \mathcal{O}((\Delta x)^2)$. Then,
	\begin{align*}
	R_i^{(1)}&=\omega_L^i \frac{\bar{u}_{i}-\bar{u}_{i-1}}{\Delta x} + \omega_R^i \frac{\bar{u}_{i+1}-\bar{u}_{i}}{\Delta x} + \omega_C^i\frac{\bar{u}_{i+1}-\bar{u}_{i-1}}{2\Delta x}\cr
	&=\bigg(\frac{1}{4}+ e_L^i\bigg) \bigg[\bar{u}'_{i}-\frac{\Delta x}{2}\bar{u}''_{i}+\frac{(\Delta x)^2}{6}\bar{u}'''_{i} \bigg] + \bigg(\frac{1}{4}+ e_R^i\bigg) \bigg[
	\bar{u}'_{i}+\frac{\Delta x}{2}\bar{u}''_{i}+\frac{(\Delta x)^2}{6}\bar{u}'''_{i}\bigg] \cr
	&\quad + \bigg(\frac{1}{2}+ e_C^i\bigg) \bigg[\bar{u}'_{i} +\frac{1}{6}(\Delta x)^2 \bar{u}'''_{i}\bigg]+ \mathcal{O}((\Delta x)^3)\cr
	&= \bar{u}'_i + \frac{(\Delta x)^2}{6} \bar{u}'''_{i} + \mathcal{O}((\Delta x)^3).
	\end{align*}
	In the last equality, we used $\sum_k \omega_k^i=1$. Hence, we can obtain
	\begin{align}\label{R' esti}
	\begin{split}
	R_i^{(1)}-R_{i+1}^{(1)}&= \bar{u}'_i-\bar{u}'_{i+1} +\frac{(\Delta x)^2}{6}(  \bar{u}'''_{i} - \bar{u}'''_{i+1} )+ \mathcal{O}((\Delta x)^3)\cr
	&= \bar{u}'_i-\bar{u}'_{i+1} -\frac{(\Delta x)^2}{24}(  \bar{u}'''_{i} - \bar{u}'''_{i+1} )+ \mathcal{O}((\Delta x)^3)\cr
	&= \bigg(\bar{u}'_i-\frac{(\Delta x)^2}{24} \bar{u}'''_{i}\bigg) -\bigg(\bar{u}'_{i+1} -\frac{(\Delta x)^2}{24} \bar{u}'''_{i+1}\bigg) + \mathcal{O}((\Delta x)^3)= u'_i-u'_{i+1} + \mathcal{O}((\Delta x)^3).
	\end{split}
	\end{align}
	It is straightforward to show that
	\begin{align}\label{R'' esti}
	\begin{split}
	R_i^{(2)}=2 \omega_C^i \frac{\bar{u}_{i+1}-2\bar{u}_i+\bar{u}_{i-1}}{(\Delta x)^2} &=2\bigg(\frac{1}{2}+ e_C^i\bigg)\bigg[\frac{\bar{u}_{i+1}-2\bar{u}_i+\bar{u}_{i-1}}{(\Delta x)^2}\bigg]\cr
	&=\bigg(1+\mathcal{O}((\Delta x)^2)\bigg)\bigg[\bar{u}''_i + \frac{(\Delta x)^2}{12} \bar{u}^{(4)}_i\bigg] =u''_i + \mathcal{O}((\Delta x))^2.
	\end{split}
	\end{align}
	From \eqref{R esti},\eqref{R' esti} and \eqref{R'' esti}, we confirm that \eqref{R 1d} satisfies the condition \eqref{conservation condition} with $k=2$. 
	%%%%%%%%%%%%%%%%%%%%%%%%%%%%%%%%%%%%%%%%%%%%%%%%%%%%%%%%%%%%%%%%%%%%%%%%%%%%%%%%%%%%%%%%%%%%%%%%%%%%%%%%%%%%%%%%%%%%%%%%%%%%%%%%%%%%%%%%%%%%%%%%%%%%%%%%%%%%%%%%%%%%%%%%%%

	%%%%%%%%%%%%%%%%%%%%%%%%%%%%%%%%%%%%%%%%%%%%%%%%%%%%%%%%%%%%%%%%%%%%%%%%%%%%%%%%%%%%%%%%%%%%%%%%%%%%%%%%%%%%%%%%%%%%%%%%%%%%%%%%%%%%%%%%%%%%%%%%%%%%%%%%%%%%%%%%%%%%%%%%%%%%%%%%%%%%%%%%%%%%%%%%%%%%%%%%%%%%%%%%%%%%%%%%%%%%%%%%%%%%%%%%%%%%%%%%%%%%%%%%%%%%%%%%%%%%%%%%%%%%%%%%%%%%%%%%%%%%%%%%%%%%%%%%%%%%%%%%%%%%%%%%%%%%%%%%%%%%%%%%%%%%%%%%%%%%%%%%%%%%%%%%%%%%%%%%%%%%%%%%%%%%%%%%%%%%%%%%%%%%%%%%%%%%%%%%%%%%%%%%
	%\section{Review on CWENO35 and explicit form of $R_i^{(\ell)}$}\label{Appendix CWENO35}
	\section{Explicit form of $R_i^{(\ell)}$.}\label{Appendix CWENO35}

	For $k = 4$ in the 1D Algorithm \ref{algorithm 1d}, we can take CWENO35 reconstruction as a basic reconstruction $R$. We refer to \cite{C-2008} for details on CWENO35 reconstruction. Here we can represent it as the following explicit form of $R_i(x)$: 
	\begin{align}\label{R 1d cweno5} 
	R_i(x) = \sum_{\ell=0}^4 \frac{R_i^{(\ell)}}{\ell !} (x-x_i)^{(\ell)},
	\end{align}  
	with
	%	where
	%	\begin{align*}
	%	R_i^{(\ell)}=
	%	\begin{cases}
	%	\ell ! \left(\sum_{k=1}^3 \omega_\ell a_{k\ell} +
	%	\omega_C a_{C\ell}\right), \,\,\quad &\ell=0,\,1,\,2\\
	%	\ell ! \omega_C a_{C\ell}, \qquad\qquad\qquad\qquad &\ell=3,\, 4.
	%	\end{cases}
	%	\end{align*}
	\begin{align}\label{CWENO5 R form}
	\begin{split}
	R_i^{(0)}&=\omega_C\left(\frac{577}{480}u_i - \frac{29}{240}u_{i-1} + \frac{19}{960}u_{i-2} - \frac{29}{240}u_{i+1} + \frac{19}{960}u_{i+2}\right) \cr
	&\quad- \omega_2\left(\frac{u_{i-1}-26u_i + u_{i+1}}{24}\right) + \omega_1 \left(\frac{23}{24}u_i + \frac{1}{12}u_{i-1} - \frac{1}{24}u_{i-2}\right) +\omega_3\left(\frac{23}{24}u_i + \frac{1}{12}u_{i+1} - \frac{1}{24}u_{i+2}\right)  \cr
	R_i^{(1)}&=-\omega_C\frac{8u_{i-1} - u_{i-2} - 8u_{i+1} + u_{i+2}}{12 \Delta x}  + \omega_1\frac{3u_i - 4u_{i-1} + u_{i-2}}{2 \Delta x}\cr
	&\quad - \omega_3\frac{3u_i - 4u_{i+1} + u_{i+2}}{2 \Delta x} 
	- \omega_2\frac{u_{i-1} - u_{i+1}}{2 \Delta x}\cr
	R_i^{(2)}&=2\bigg( \omega_1\frac{u_i - 2u_{i-1} + u_{i-2}}{2(\Delta x)^2} + \omega_2\frac{u_{i-1} - 2u_i + u_{i+1}}{2(\Delta x)^2} + \omega_3\frac{u_i - 2u_{i+1} + u_{i+2}}{2(\Delta x)^2}\bigg) \cr
	&\quad- 2\omega_C\bigg( \frac{10u_i - 6u_{i-1} + u_{i-2} - 6u_{i+1} + u_{i+2}}{4(\Delta x)^2}/\bigg)\cr	
	R_i^{(3)}&=6\omega_C\left( \frac{u_{i-1} - u_{i+1}}{3(\Delta x)^3} - \frac{u_{i-2} - u_{i+2}}{6(\Delta x)^3}\right),\quad 
	R_i^{(4)}=	24\omega_C \left( \frac{u_{i-2} + 6u_i+ u_{i+2}}{12 (\Delta x)^4} - \frac{u_{i-1} + u_{i+1}}{3(\Delta x)^4}\right).
	\end{split}
	\end{align}
	where the non-linear weights $\omega_k^i$ are computed as in \eqref{CWENO omega}. (See also \cite{C-2008}.)
	
	The CWENOZ5 reconstruction also can be directly obtained from \cite{CPSV-2017} with the following non-linear weights:
	\begin{align}\label{CWENOZ5 omega}
	\omega_k^i=\frac{\alpha_k^i}{\sum_\ell \alpha_\ell^i}, \quad \alpha_k^i=C_i\left(1+\frac{\tau}{\epsilon + {\beta}_k^i}\right)^t, \quad k,\ell \in \{1,\,2,\,3,\,C\},
	\end{align}
	where $t\geq 1$ and $\tau=|\beta_3^i-\beta_1^i|$.

	%%%%%%%%%%%%%%%%%%%%%%%%%%%%%%%%%%%%%%%%%%%%%%%%%%%%%%%%%%%%%%%%%%%%%%%%%%%%%%%%%%%%%%%%%%%%%%%%%%%%%%%%%%%%%%%%%%%%%%%%%%%%%%%%%%%%%%%%%%%%%%%%%%%%%%%%%%%%%%%%%%%%%%%%%%%%%%%%%%%%%%%%%%%%%%%%%%%%%%%%%%%%%%%%%%%%%%%%%%%%%%%%%%%%%%%%%%%%%%%%%%%%%%%%%%%%%%%%%%%%%%%%%%%%%%%%%%%%%%%%%%%%%%%%%%%%%%%%%%%%%%%%%%%%%%%%%%%%%%%%%%
	\section{Proof of Proposition \ref{Consistency 2d}}\label{Appendix accuracy proof 2d}
	\begin{proof}
		%	We first decompose the index set $0 \leq |\ell| \leq k$ as follows:
		%	\begin{align*}
		%		\begin{split}
		%			A&=\{\ell : |\ell|=\text{even},\quad  0 \leq |\ell|\leq k\},\cr
		%			B&=\{\ell :  \ell_1=\text{odd},\quad \ell_2=\text{even} ,\quad  0 \leq |\ell|\leq k\},\cr 
		%			C&=\{\ell :  \ell_1=\text{even},\quad \ell_2=\text{odd} ,\quad  0 \leq |\ell|\leq k\}.
		%		\end{split}
		%	\end{align*} 
		Recall the index set in \eqref{decomposition1}. For each index set, apply corresponding approximations in \eqref{consistency condition 2d} to \eqref{scheme 2d}. Then,
		\begin{align}\label{Q explicit 2d}
		\begin{split}
		Q_{i+\theta,j+\eta}=\sum_{|\ell|=0}^k (\Delta)^{\ell} &\bigg(\alpha_{\ell_1}(\theta)\alpha_{\ell_2}(\eta)u_{i,j}^{(\ell)}+ 
		\beta_{\ell_1}(\theta)\alpha_{\ell_2}(\eta)u_{i+1,j}^{(\ell)}\cr
		&+ 
		\alpha_{\ell_1}(\theta)\beta_{\ell_2}(\eta)u_{i,j+1}^{(\ell)}+
		\beta_{\ell_1}(\theta)\beta_{\ell_2}(\eta)u_{i+1,j+1}^{(\ell)}\bigg)+ \mathcal{O}(h^{k+2}).
		\end{split}
		\end{align}
		Now, we consider Taylor's expansion of $u_{i+1,j}^{(\ell)}, u_{i,j+1}^{(\ell)}, u_{i+1,j+1}^{(\ell)}$:
		\begin{align*}
		u_{i+1,j}^{(\ell)}&=\sum_{m_1=0}^{k-|\ell|} \frac{u_{i,j}^{(\ell_1+m_1,\ell_2)}}{m_1 !}(\Delta x)^{m_1} +  \frac{u_{i,j}^{(\ell_1+k-|\ell|+s,\ell_2)}}{(k-|\ell|+s) !}(\Delta x)^{k-|\ell|+s}\cr
		u_{i,j+1}^{(\ell)}&=\sum_{m_2=0}^{k-|\ell|} \frac{u_{i,j}^{(\ell_1,\ell_2+m_2)}}{m_2 !}(\Delta y)^{m_2} + \frac{u_{i,j}^{(\ell_1,\ell_2+k-|\ell|+s)}}{(k-|\ell|+s) !}(\Delta x)^{k-|\ell|+s}\cr
		u_{i+1,j+1}^{(\ell)}&=\sum_{|m|=0}^{k-|\ell|} \frac{u_{i,j}^{(\ell+m)}}{m_1 ! m_2 !}(\Delta)^m + \sum_{|m|=k-|\ell|+s} \frac{u_{i,j}^{(\ell+m)}}{m_1 ! m_2 !}(\Delta)^m,
		\end{align*} 
		where $m=(m_1,m_2)$ is an multi index. Inserting this into \eqref{Q explicit 2d}, we obtain
		\begin{align}\label{Q with gamma}
		\begin{split}
		Q_{i+\theta,j+\eta}=:  \sum_{|\ell|=0}^k (\Delta)^{\ell} \Lambda_\ell(\theta,\eta) u_{i,j}^{(\ell)} + \Gamma(\theta,\eta)
		+ \mathcal{O}\left(h^{k+2}\right),
		\end{split}
		\end{align}
		where $\Lambda_\ell(\theta,\eta)$ and $\Gamma(\theta,\eta)$ are given by
		\begin{align*} %\label{Lambda def}
		\Lambda_\ell(\theta,\eta)&= \alpha_{\ell_1}(\theta)\alpha_{\ell_2}(\eta)  + \alpha_{\ell_2}(\eta)\sum_{m_1=0}^{\ell_1}\beta_{m_1}(\theta) \frac{1}{(\ell_1-m_1)!}\cr
		&+ \alpha_{\ell_1}(\theta)\sum_{m_2=0}^{\ell_2}\beta_{m_2}(\eta) \frac{1}{(\ell_2-m_2)!} +\sum_{|m|=0}^{|\ell|} \beta_{m_1}(\theta)\beta_{m_2}(\eta)\frac{1}{(\ell_1-m_1)! (\ell_2-m_2)!}\cr
		\Gamma(\theta,\eta) &= \sum_{|\ell|=0}^{k} \beta_{\ell_1}(\theta)\alpha_{\ell_2}(\eta)\frac{u_{i,j}^{(\ell_1+k-|\ell|+s,\ell_2)}}{(k-|\ell|+s) !}(\Delta x)^{k-|\ell|+s}(\Delta)^{\ell}\cr
		&\quad+\sum_{|\ell|=0}^{k}\alpha_{\ell_1}(\theta)\beta_{\ell_2}(\eta)\frac{u_{i,j}^{(\ell_1,\ell_2+k-|\ell|+s)}}{(k-|\ell|+s) !}(\Delta y)^{k-|\ell|+s}(\Delta)^{\ell}\cr
		&\quad +\sum_{|\ell|=0}^{k}\beta_{\ell_1}(\theta)\beta_{\ell_2}(\eta)\sum_{|m|=k-|\ell|+s} \frac{u_{i,j}^{(\ell+m)}}{m_1 ! m_2 !}(\Delta)^{m+\ell}.
		\end{align*}
		Now, we add $0$ to $\Gamma(\theta,\eta)$ using the following identity: 
		\begin{align}\label{add 0 2d}
		\begin{split}
		0&=\sum_{|\ell|=k+1}\left(\alpha_{\ell_1}(\theta)+\beta_{\ell_1}(\theta)\right)\left(\alpha_{\ell_2}(\eta)+\beta_{\ell_2}(\eta)\right)u_{i,j}^{(\ell)}(\Delta)^{\ell}\cr
		&=\sum_{|\ell|=k+1}\alpha_{\ell_1}(\theta)\alpha_{\ell_2}(\eta)u_{i,j}^{(\ell)}(\Delta)^{\ell}+\sum_{|\ell|=k+1}\beta_{\ell_1}(\theta)\alpha_{\ell_2}(\eta)u_{i,j}^{(\ell)}(\Delta)^{\ell}\cr
		&\quad+\sum_{|\ell|=k+1}\alpha_{\ell_1}(\theta) \beta_{\ell_2}(\eta)u_{i,j}^{(\ell)}(\Delta)^{\ell}+\sum_{|\ell|=k+1}\beta_{\ell_1}(\theta)\beta_{\ell_2}(\eta)u_{i,j}^{(\ell)}(\Delta)^{\ell},
		\end{split}
		\end{align}
		then
		\begin{align*}
		\Gamma(\theta,\eta)+0 			 &=\sum_{|\ell|=k+1}\alpha_{\ell_1}(\theta)\alpha_{\ell_2}(\eta)u_{i,j}^{(\ell)}(\Delta)^{\ell}+\sum_{|\ell|=0}^{k+1} \beta_{\ell_1}(\theta)\alpha_{\ell_2}(\eta)\frac{u_{i,j}^{(\ell_1+k-|\ell|+s,\ell_2)}}{(k-|\ell|+s) !}(\Delta x)^{k-|\ell|+s}(\Delta)^{\ell}\cr
		&\quad+\sum_{|\ell|=0}^{k+1}\alpha_{\ell_1}(\theta)\beta_{\ell_2}(\eta)\frac{u_{i,j}^{(\ell_1,\ell_2+k-|\ell|+s)}}{(k-|\ell|+s) !}(\Delta y)^{k-|\ell|+s}(\Delta)^{\ell}\cr
		&\quad+\sum_{|\ell|=0}^{k+1}\beta_{\ell_1}(\theta)\beta_{\ell_2}(\eta)\sum_{|m|=k-|\ell|+s} \frac{u_{i,j}^{(\ell+m)}}{m_1 ! m_2 !}(\Delta)^{m+\ell}.
		\end{align*}
		%	Summing over $\ell$, we get
		%	\begin{align}\label{Lambda summation}
		%	\sum_{|\ell|=k+1}(\Delta)^{\ell}\Lambda_{\ell}(\theta,\eta)u_{i,j}^{(\ell)}&= \sum_{|\ell|=k+1}\alpha_{\ell_1}(\theta)\alpha_{\ell_2}(\eta) u_{i,j}^{(\ell)}(\Delta)^{\ell}\cr
		%	&\quad+\sum_{|\ell|=k+1}\alpha_{\ell_2}(\eta)\sum_{m_1=0}^{\ell_1}\beta_{m_1}(\theta) \frac{1}{(\ell_1-m_1)!}u_{i,j}^{(\ell)}(\Delta)^{\ell} \cr
		%	&\quad+\sum_{|\ell|=k+1} \alpha_{\ell_1}(\theta)\sum_{m_2=0}^{\ell_2}\beta_{m_2}(\eta) \frac{1}{(\ell_2-m_2)!}u_{i,j}^{(\ell)}(\Delta)^{\ell} \cr
		%	&\quad+\sum_{|\ell|=k+1}\sum_{|m|=0}^{|\ell|} \beta_{m_1}(\theta)\beta_{m_2}(\eta)\frac{1}{(\ell_1-m_1)! (\ell_2-m_2)!}u_{i,j}^{(\ell)}(\Delta)^{\ell}.
		%	\end{align}
		%	Here we set $\frac{1}{(\ell_1-m_1)!} = \frac{1}{(\ell_2-m_2)!}=0$ if $\ell_1<m_1$ or $\ell_2<m_2$, .
		This reduces to
		\begin{align*}
		\Gamma(\theta,\eta) = \sum_{|\ell|=k+1}(\Delta)^{\ell}\Lambda_{\ell}(\theta,\eta)u_{i,j}^{(\ell)}.  
		\end{align*}
		%	The second summation in \eqref{Lambda summation} is
		%	\begin{align*}
		%	&\sum_{|\ell|=k+1}\alpha_{\ell_2}(\eta)\sum_{m_1=0}^{\ell_1}\beta_{m_1}(\theta) \frac{1}{(\ell_1-m_1)!}u_{i}^{(\ell)}(\Delta)^{\ell} \cr
		%	&=\sum_{\ell_2=0}^{k+1}\alpha_{\ell_2}(\eta) \sum_{m_1=0}^{k+1-\ell_2}\beta_{m_1}(\theta) \frac{1}{(k+1-\ell_2-m_1)!}u_{i}^{(k-\ell_2+s,\ell_2)}(\Delta)^{(k-\ell_2+s,\ell_2)},
		%	\end{align*}
		%	which is equal to the second summation of $\Gamma(\theta,\eta)$:
		%	\begin{align*}
		%	&\sum_{|\ell|=0}^{k+1} \beta_{\ell_1}(\theta)\alpha_{\ell_2}(\eta)\frac{u_{i,j}^{(\ell_1+k-|\ell|+s,\ell_2)}}{(k-|\ell|+s) !}(\Delta)^{(k-\ell_2+s,\ell_2)}\cr	
		%	&= \sum_{n_1=0}^{k+1}\sum_{\ell_2=0}^{n_1} \beta_{n_1-\ell_2}(\theta)\alpha_{\ell_2}(\eta)\frac{u_{i,j}^{(k+s-\ell_2,\ell_2)}}{(k-n_1+s) !}(\Delta)^{(k-\ell_2+s,\ell_2)}\cr	
		%	&= \sum_{\ell_2=0}^{k+1}\alpha_{\ell_2}(\eta)\sum_{n_1=\ell_2}^{k+1} \beta_{n_1-\ell_2}(\theta)\frac{u_{i,j}^{(k+s-\ell_2,\ell_2)}}{(k-n_1+s) !}(\Delta)^{(k-\ell_2+s,\ell_2)}\cr	
		%	&= \sum_{\ell_2=0}^{k+1}\alpha_{\ell_2}(\eta)\sum_{m_1=0}^{\ell_1} \beta_{m_1}(\theta)\frac{u_{i,j}^{(k+s-\ell_2,\ell_2)}}{(k-\ell_2-m_1+s) !}(\Delta)^{(k-\ell_2+s,\ell_2)}.
		%	\end{align*}
		%	Other summations are compared in a similar way, and turn out to be same. 
		Based on this formula, we rearrange all terms in \eqref{Q with gamma} as follows:	
		\begin{align*}
		Q_{i+\theta,j+\eta}&= u_{i,j} + \theta \Delta x u'_{i,j} + \eta \Delta y u_{i,j}^{\backprime}+\left(\frac{\theta^2}{2} + \frac{1}{24}\right)(\Delta x)^2 u''_{i,j} + \left(\frac{\eta^2}{2} + \frac{1}{24}\right)(\Delta y)^2 u_{i,j}^{\backprime\backprime} + \eta\theta \Delta x\Delta y u'^{\backprime}_{i,j}\cr
		&\quad+ \left( \frac{\theta^3}{6} + \frac{\theta}{24} \right)  (\Delta x)^3u'''_{i,j} + \left(\frac{\eta \theta^2 }{2}+ \frac{\eta}{24}\right)(\Delta x)^2\Delta y u''^{\backprime}_{i,j} \cr
		&\quad + \left(\frac{\theta\eta^2}{2} + \frac{\theta}{24}\right)\Delta x(\Delta y)^2 u'^{\backprime\backprime}_{i,j} + \left(\frac{\eta^3 }{6} + \frac{\eta}{24}\right)(\Delta y)^3u_{i,j}^{\backprime\backprime\backprime} + \cdots + \mathcal{O}(h^{k+2})\cr
		&= u_{i+\theta,j+\eta} + \frac{(\Delta x)^2}{24}u''_{i+\theta,j+\eta} + \frac{(\Delta y)^2}{24}u_{i+\theta,j+\eta}^{\backprime\backprime} +\cdots + \mathcal{O}(h^{k+2})\cr
		&= \bar{u}_{i+\theta,j+\eta} + \mathcal{O}(h^{k+2}),
		\end{align*}
		which completes the proof.
	\end{proof}
	%%%%%%%%%%%%%%%%%%%%%%%%%%%%%%%%%%%%%%%%%%%%%%%%%%%%%%%%%%%%%%%%%%%%%%%%%%%%%%%%%%%%%%%%%%%%%%%%%%%%%%%%%%%%%%%%%%%%%%%%%%%%%%%%%%%%%%%%%%%%%%%%%%%%%%%%%%%%%%%%%%%%%%%%%%%%%%%%%%%%%%%%%%%%%%%%%%%%%%%%%%%%%%%%%%%%%%%%%%%%%%%%%%%%%%%%%%%%%%%%%%%%%%%%%%%%%%%%%%%%%%%%%%%%%%%%%%%%%%%%%%%%%%%%%%%%%%%%%%%%%%%%%%%%%%%%%%%%%%%%%%%%%%%%%%%%%%%%%%%%%%%%%%%%%%%%%%%%%%%%%%%%%%%%%%%%%%%%%%%%%%%%%%%%%%%%%%%%%%%%%%%%%%%%%%%%%%%%%%%%%%%%%%%%%%%%%%%%%%%%%%%%%%%%%%%%%%%%%%%%%%%%%%%%%%%%%%%%%%%%%%%%%%%%%%%%%%%%%%%%%%%%%%%%%%%%%%%%%%%%%%%%%%%%%%%%%%%%%%%%
	\section{Semi-Lagrangian schemes for hyperbolic system with BDF methods}
	The BDF methods \cite{HW} for an ordinary system $y'(t)=f(y)$ can be represented by
	\begin{align*}
	y^{n+1}= \sum_{k=1}^{s}a_k y^{n+1-s} + \beta_s f^{n+1}.
	\end{align*}
	where $\alpha_k$ and $\beta_s$ are coefficients corresponding to s-order BDF methods. Here, we consider two cases $s=2,3$:
	\begin{align*}
	\text{BDF2:}&\quad y^{n+1}= \frac{4}{3} y^{n} - \frac{1}{3} y^{n-1} + \frac{2}{3} f^{n+1}\cr
	\text{BDF3:}&\quad y^{n+1}= \frac{18}{11} y^{n} - \frac{9}{11} y^{n-1} + \frac{2}{11} y^{n-1} + \frac{6}{11} f^{n+1}.
	\end{align*}
	\subsection{BDF methods for Xin-Jin model}\label{BDF section Xin-Jin}
	Applying BDF method based SL methods to \eqref{hyper system lagrangian}, we obtain:%As a stable discretization, we use the backward differentiation formula (BDF): 
	%\[
	%BDF:y^{n+1}= \sum_{k=1}^{s}\alpha_{k}\,y^{n+1-k} + \beta_s \,\Delta t \, g(y^{n+1},t_{n+1})\\
	%\]
	\begin{align}\label{BDFschemes}
	\begin{array}{l}
	\displaystyle f^{n+1}_i= \sum_{k=1}^{s}\alpha_{k}\,\tilde{f}^{n,k} + \beta_s \,\frac{\Delta t}{\kappa} \, K^{n+1}_{i,1}\\
	\displaystyle g^{n+1}_i= \sum_{k=1}^{s}\alpha_{k}\,\tilde{g}^{n,k} + \beta_s \,\frac{\Delta t}{\kappa} \, K^{n+1}_{i,2}.
	\end{array}
	\end{align}
	Here we use the following notation:
	\begin{itemize}
		\item For $k=1,\dots,s$, the $(n+1-k)$th stage values of $f,g$ along the backward-characteristics which come from $x_i$ with characteristic speed $-1,1$ at time $t^{n+1}$:
		\begin{align*}
		\tilde{f}_{i}^{n,k} &\approx f(x_{i}+ k \Delta t, t^{n+1-k}),\quad \tilde{g}_{i}^{n,k} \approx g(x_{i}-k \Delta t, t^{n+1-k}).
		\end{align*}
		%		In the same way, we define $\tilde{g}_{i}^{n,k} \approx g(x_{i}-\lambda_2 k \Delta t, t^{n+1-k})$ where $\lambda_2=1$.  
		\item Fluxes at time $t^{n+1}$: 
		$$K^{n+1}_{i,1}\approx F(u_i^{n+1}) - v_i^{n+1}, \quad K^{n+1}_{i,2}\approx -K^{n+1}_{i,1}.$$
	\end{itemize}
	%\begin{align}\label{BDFschemes}
	%\begin{array}{l}
	%\displaystyle	BDF2: y^{n+1}= \frac{4}{3}y^n - \frac{1}{3}y^{n-1} + \frac{2}{3} \Delta t \, g(y^{n+1},t_{n+1}),\\[3mm]
	%BDF3:
	%\displaystyle	y^{n+1}= \frac{18}{11}y^n - \frac{9}{11}y^{n-1} + \frac{2}{11}y^{n-2} + \frac{6}{11} \, \Delta t \, g(y^{n+1},t_{n+1}).
	%\end{array}
	%\end{align}	
	%Here we applying SL methods based BDF time discretizations. 
	The algorithm can be summarized as follows:
	\subsubsection*{Algorithm of $s$-order BDF methods}\label{algorithm BDF}
	\begin{enumerate}
		\item For $k=1,2,\dots,s$, interpolate $\tilde{f}_{i}^{n,k}$ and $\tilde{g}_{i}^{n,k}$ on $x_{i}+k\Delta t$ and $x_{i}-k\Delta t$ from $\{f_{i}^{n+1-k}\}$ and $\{g_{i}^{n+1-k}\}$, respectively.
		\item
		By summing and subtracting two equations in \eqref{BDFschemes}, compute: 
		\begin{align*}
		u_i^{n+1}&=\frac{\sum_{k=1}^s \alpha_k\left(\tilde{g}_i^{n,k} + \tilde{f}_i^{n,k} \right)}{2}\cr
		v_i^{n+1}&=\frac{   \kappa \sum_{k=1}^s \alpha_k\left(\tilde{g}_i^{n,k} - \tilde{f}_i^{n,k} \right)/2  +\beta_{k} \Delta t F\left(u_i^{n+1}\right)    }{\kappa + \beta_{s} \Delta t}.
		\end{align*}
		\item Compute:
		\[
		f_i^{n+1}=u_i^{n+1}-v_i^{n+1}, \quad g_i^{n+1}=u_i^{n+1} + v_i^{n+1}
		\]	
	\end{enumerate}
	%	Note that BDF based methods are much simpler to implement than DIRK based ones.
	%%%%%%%%%%%%%%%%%%%%%%%%%%%%%%%%%%%%%%%%%%%%%%%%%%%%%%%%%%%%%%%%%%%%%%%%%%%%%%%%%%%%%%%%%%%%%%%%%%%%%%%%%%%%%%%%%%%%%%%%%%%%%%%%%%%%%%%%%%%%%%%%%%%%%%%%%%%%%%%%%%%%%%%%%%%%%%%%%%%%%%%%%%%%%%%%%%%%%%%%%%%%%%%%%%%%%%%%%%%%%%%%%%%%%%%%%%%%%%%%%%%%%%%%%%%%%%%%%%%%%%%%%%%%%%%%%%%%%%%%%%%%%%%%%%%%%%%%%%%%%%%%%%%%%%%%%%%%%%%%%%%%%%%%%%%%%%%%%%%%%%%%%%%%%%%%%%%%%%%%%%%%%%%%%%%%%%%%%%%%%%%%%%%%%%%%%%%%%%%%%%%%%%%%%%%%%%%%%%%%%%%%%%%%%%%% 
	\subsection{BDF methods for Broadwell model}\label{BDF methods for Broadwell model}
	Now, we extend this to high order s-order BDF methods. The solutions are obtained by
	\begin{align}\label{s-BDF}
	\begin{split}
	f_i^{n+1}&= \sum_{\ell=1}^{s} \alpha_k f_i^{n,k} + \frac{\beta_{s}\Delta t}{\kappa} Q_i^{n+1}\cr
	g_i^{n+1}&= \sum_{\ell=1}^{s} \alpha_k g_i^{n,k} + \frac{\beta_{s}\Delta t}{\kappa} Q_i^{n+1}\cr
	h_i^{n+1}&= \sum_{\ell=1}^{s} \alpha_k h_i^{n+1-k} - \frac{\beta_{s}\Delta t}{\kappa} Q_i^{n+1}\cr
	\end{split}
	\end{align}
	where
	\begin{align*}
	Q_i^{n+1}&= (h_i^{n+1})^2-f_i^{n+1}g_i^{n+1}, \quad f_i^{n,k}\approx f(x_i-k \Delta t,t^{n+1-k}), \quad g_i^{n,k}\approx g(x_i+k \Delta t,t^{n+1-k}),
	\end{align*}
	%\begin{align}\label{s-BDF rewrite}
	%\begin{split}
	%f_i^{n+1}&= F_i^{n} + \frac{\beta_s\Delta t}{\kappa} Q_i^{n+1},\quad F_i^{n}:=\sum_{\ell=1}^{s} \alpha_k f_i^{n,k}\cr
	%g_i^{n+1}&= G_i^{n} + \frac{\beta_s\Delta t}{\kappa}Q_i^{n+1},\quad G_i^{n}:=\sum_{\ell=1}^{s} \alpha_k g_i^{n,k}\cr
	%h_i^{n+1}&= H_i^{n} - \frac{\beta_s\Delta t}{\kappa} Q_i^{n+1},\quad H_i^{n}:=\sum_{\ell=1}^{s} \alpha_k h_i^{n+1-k}.
	%\end{split}
	%\end{align}
	Then, s-order BDF methods are summarized as follows:
	\subsubsection*{Algorithm of s-order BDF methods}
	\begin{enumerate}
		\item Reconstruct $f_i^{n,k}$ and 
		$g_i^{n,k}$ for $k=1,\cdots,s$.
		\item Compute $F_i^{n}$, $G_i^{n}$ and $H_i^{n}$ using 
		\begin{align*}
		\begin{split}
		F_i^{n}:=\sum_{\ell=1}^{s} \alpha_k f_i^{n,k},\quad 
		G_i^{n}:=\sum_{\ell=1}^{s} \alpha_k g_i^{n,k},\quad 
		H_i^{n}:=\sum_{\ell=1}^{s} \alpha_k h_i^{n+1-k}.
		\end{split}
		\end{align*}
		\item Solve \eqref{s-BDF} for
		%\textcolor{red}{	\begin{align}
		%	\begin{split}
		%	f_i^{n+1}&= F_i^{n} + \frac{\beta_s\Delta t}{\kappa} Q_i^{n+1}, \quad g_i^{n+1}= G_i^{n} + \frac{\beta_s\Delta t}{\kappa}Q_i^{n+1}, \quad
		%	h_i^{n+1}= H_i^{n} - \frac{\beta_s\Delta t}{\kappa} Q_i^{n+1},
		%	\end{split}
		%	\end{align}}
		\begin{align*}
		&h_i^{n+1} = \frac{\beta_s\Delta t( H_i^{n} + F_i^{n} )( H_i^{n} + G_i^{n})+ \kappa H_i^{n}}{\beta_s\Delta t\left(G_i^{n} + 2 H_i^{n} + F_i^{n}\right) + \kappa},\cr
		&f_i^{n+1} =  H_i^{n} + F_i^{n}-  h_i^{n+1},\qquad 
		g_i^{n+1}=  H_i^{n} + G_i^{n}  -  h_i^{n+1}.
		\end{align*}
	\end{enumerate}

	%%%%%%%%%%%%%%%%%%%%%%%%%%%%%%%%%%%%%%%%%%%%%%%%%%%%%%%%%%%%%%%%%%%%%%%%%%%%%%%%%%%%%%%%%%%%%%%%%%%%%%%%%%%%%%%%%%%%%%%%%%%%%%%%%%%%%%%%%%%%%%%%%%%%%%%%%%%%%%%%%%%%%%%%%%%%%%%%%%%%%%%%%%%%%%%%%%%%%%%%%%%%%%%%%%%%%%%%%%%%%%%%%%%%%%%%%%%%%%%%%%%%%%%%%%%%%%%%%%%%%%%%%%%%%%%%%%%%%%%%%%%%%%%%%%%%%%%%%%%%%%%%%%%%%%%%%%%%%%%%%%%%%%%%%%%%%%%%%%%%%%%%%%%%%%%%%%%%%%%%%%%%%%%%%%%%%%%%%%%%%%%%%%%%%%%%%%%%%%%%%%%%%%%%%%%%%%%%%%%%%%%%%%%%%%%%

	%
	% \section{Typesetting your paper}
	% \label{sec1}
	%Please use \verb+elsarticle.cls+ for typesetting your paper.
	%Additionally load the package \verb+jcomp.sty+ in the preamble using the following command: 
	%\begin{verbatim} 
	%\usepackage{jcomp}
	%\end{verbatim}
	%Following commands are defined for this journal which are not in
	%\verb+elsarticle.cls+. 
	%\begin{verbatim}
	%\received{}
	%\finalform{}
	%\accepted{}
	%\availableonline{}
	%\communicated{}
	%\end{verbatim}
	%Any instructions relevant to the \verb+elsarticle.cls+ are applicable
	%here as well. See the online instruction available on:
	%\makeatletter
	%\if@twocolumn
	%\begin{verbatim}
	%http://support.stmdocs.in/wiki/
	%index.php?title=Elsarticle.cls
	%\end{verbatim}
	%\else
	%\begin{verbatim}
	%http://support.stmdocs.in/wiki/index.php?title=Elsarticle.cls
	%\end{verbatim}
	%\fi

	\section*{Acknowledgments}
	S. Y. Cho has been supported by ITN-ETN Horizon 2020 Project ModCompShock, Modeling and Computation on Shocks and Interfaces, Project Reference 642768. 
	S.-B. Yun has been supported by Samsung Science and Technology Foundation under Project Number SSTF-BA1801-02. 
	All the authors would like to thank the Italian Ministry of Instruction, University and Research (MIUR) to support this research with funds coming from PRIN Project 2017 (No. 2017KKJP4X entitled “Innovative numerical methods for evolutionary partial differential equations and applications”). 
	S. Boscarino has been supported by the University of Catania (“Piano della Ricerca 2016/2018, Linea di intervento 2”). S. Boscarino and G. Russo are members of the INdAM Research group GNCS. 
	%Seok-Bae Yun is supported by Samsung Science
	%and Technology Foundation under Project Number SSTF-BA1801-02.
	
	%\textcolor{red}{Acknowledgments should be inserted at the end of the paper, before the	references, not as a footnote to the title. Use the unnumbered
	%	Acknowledgements Head style for the Acknowledgments heading.
	%}
	%\textcolor{red}{Text: All citations in the text should refer to:
	%	\begin{enumerate}
	%		\item Single author: the author's name (without initials, unless there
	%		is ambiguity) and the year of publication;
	%		\item Two authors: both authors' names and the year of publication;
	%		\item Three or more authors: first author's name followed by `et al.'
	%		and the year of publication.
	%	\end{enumerate}
	%	Citations may be made directly (or parenthetically). Groups of
	%	references should be listed first alphabetically, then chronologically.
	%}
	%%Vancouver style references.

%	\bibliographystyle{model1-num-names}
\bibliographystyle{amsplain}
	\bibliography{references}
	
\end{document}